\newcommand {\SC} {{\mathbb C}}
\newcommand {\SK} {{\mathbb K}}
\newcommand {\SN} {{\mathbb N}}
\newcommand {\SR} {{\mathbb R}}
\newcommand {\SX} {{\mathbb X}}
\newcommand {\SZ} {{\mathbb Z}}
\let\oldphi=\phi
\renewcommand {\phi} {{\varphi}}
\newcommand {\al} {{\alpha}}
\newcommand {\dt} {{\delta}}
\newcommand {\Dt} {{\Delta}}
\newcommand {\e} {{\varepsilon}}
\newcommand {\bfe} {{\boldsymbol \e}}
\newcommand {\la} {{\lambda}}
\newcommand{\La}{{\Lambda}}
\newcommand{\ga}{{\gamma}}
\newcommand{\Ga}{{\Gamma}}
\newcommand {\phin} {{\varphi_{i_{n+1}}}}
\newcommand{\be}{{\bf e}}
\newcommand {\bp} {{\bf p}}
\newcommand {\bx} {{\bf x}}
\newcommand {\by} {{\bf y}}
\newcommand {\cB} {{{\mathcal B}}}
\newcommand {\cD} {{\mathcal D}}
\newcommand {\cH} {{\mathcal H}}
\newcommand {\cI} {{\mathcal I}}
\newcommand {\cY} {{\mathcal Y}}
\newcommand {\Ts} {\textstyle}
\def\supp{\mathop{\rm supp}}
\def\dist{\mathop{\rm dist}}
\def\sgn{\mathop{\rm sign\, }}
\def\lsgn{\mathop{\overline{\mbox{\rm sign}}}}
\def\lan#1#2{{\langle\,{#1}\,,\,{#2}\,\rangle}}
\newcommand {\Scirc} {\raise.2ex\hbox{$\scriptstyle\circ$}}
\newcommand {\mand} {{\quad\mbox{and}\quad}}
\renewcommand {\mid} {{\,\,\,\colon\,\,\,}}
\renewcommand {\mid} {{\,\,\,\colon\,\,\,}}
\newcommand{\sline}{{\smallskip

\noindent}}
\newcommand{\Ba}[1]{\begin{array}{#1}}
\newcommand{\Ea}{\end{array}}
\newcommand{\Be}{\begin{equation}}
\newcommand{\Ee}{\end{equation}}
\newcommand{\Bea}{\begin{eqnarray}}
\newcommand{\Eea}{\end{eqnarray}}
\newcommand{\Beas}{\begin{eqnarray*}}
\newcommand{\Eeas}{\end{eqnarray*}}
\newcommand{\Benu}{\begin{enumerate}}
\newcommand{\Eenu}{\end{enumerate}}
\newcommand{\Bi}{\begin{itemize}}
\newcommand{\Ei}{\end{itemize}}
\newcommand{\BR}{\begin{Remark} \em}
\newcommand{\ER}{\end{Remark}}
\newcommand{\BE}{\begin{example} \em}
\newcommand{\EE}{\end{example}}
\newcounter{remark}
\newtheorem{theorem}[equation]{T{\hskip 0pt\footnotesize\bf HEOREM}}
\newtheorem{proposition}[equation]{P{\hskip 0pt\footnotesize\bf ROPOSITION}}
\newtheorem{corollary}[equation]{C{\hskip 0pt\footnotesize\bf OROLLARY}}
\newtheorem{lemma}[equation]{L{\hskip 0pt\footnotesize\bf EMMA}}
\newtheorem{Remark}[equation]{R{\hskip 0pt\footnotesize\bf EMARK}}
\newtheorem{definition}[equation]{D{\hskip 0pt\footnotesize\bf EFINITION}}
\newtheorem{example}[equation]{E{\hskip 0pt\footnotesize\bf XAMPLE}}
\newcommand {\Proofof}[1] {\noindent{\bf P{\footnotesize\bf ROOF} of {#1}: } \ }
\newcommand {\ProofEnd} {
             \begin{flushright} \vskip -0.2in $\Box$ \end{flushright}}
\newcommand {\Au}{{\texttt{A2}}}
\newcommand {\At}{{\texttt{A3}}}
\newcommand {\D}{{\texttt{D}}}
\def\bbone{{\mathbbm 1}}
\def\bp{\bbone_{I,p}}
\def\N{{\mathbb N}}
\newcommand {\tri}[1] {\big\VERT#1\big\VERT_p}
\newcommand {\stri}[1] {\VERT#1\VERT_p}
\newcommand {\fpq} {{\mathfrak f_{p,q}}}
\newcommand {\fpt} {{\mathfrak f_{p,2}}}
\newcommand {\A} {{\mathscr{A}}}
\newcommand {\G} {{\mathscr{G}}}
\newcommand {\sD} {{\mathscr{D}}}
\newcommand {\sR} {{\mathscr{R}}}
\newcommand {\sRd} {\sR_d}
\newcommand {\bD} {\bar{\sD}}
\newcommand {\bRd} {\bar{\sR}_d}
\newcommand {\bm} {{\vec{m}}}
\DeclareSymbolFont{fouriersymbols}{FMS}{futm}{m}{n}
\DeclareSymbolFont{fourierlargesymbols}{FMX}{futm}{m}{n}
\DeclareMathDelimiter{\VERT}{\mathord}{fouriersymbols}{152}{fourierlargesymbols}{147}
\def \<{\langle}
\def\>{\rangle}
\def \bn{{\mathbf n}}
\def\Ave{\operatorname{Ave}}
\begin{document}

\title{Lebesgue-type inequalities in greedy approximation }

\author{S. Dilworth}
\address{S. J. Dilworth
\\
Department of Mathematics, University of South Carolina, Columbia,
SC, USA
} \email{dilworth@math.sc.edu}

\author{G. Garrig\'os}
\address{Gustavo Garrig\'os
\\
Departamento de Matem\'aticas
\\
Universidad de Murcia
\\
30100 Murcia, Spain} \email{gustavo.garrigos@um.es}

\author{E. Hern\'andez}
\address{Eugenio Hern\'andez
\\
Departamento de Matem\'aticas,
Universidad Aut\'onoma de Madrid,
28049 Madrid, Spain} \email{eugenio.hernandez@uam.es}

\author{D. Kutzarova}
\address{D. Kutzarova
\\
Department of Mathematics, University of Illinois Urbana-Champaign,
	Urbana, IL 61807, USA; Institute of Mathematics and Informatics, Bulgarian Academy of
Sciences, Sofia, Bulgaria} 
\email{denka@illinois.edu}

\author{V. Temlyakov}
\address{V. N. Temlyakov
	\\
	Department of Mathematics, University of South Carolina, Columbia,
SC, USA; Steklov Institute of Mathematics, Moscow, Russia; Lomonosov
Moscow State University, Moscow, Russia}
	\email{temlyak@math.sc.edu}

\subjclass[2010]{41A65, 41A25, 41A46, 46B15, 46B20.}

\keywords{Non-linear approximation, weak Chebyshev greedy algorithm, thresholding greedy algorithm, 
uniformly smooth Banach space, greedy basis.}

\begin{abstract}
We present new results regarding Lebesgue-type inequalities for the Weak Chebyshev Greedy Algorithm (WCGA)
in uniformly smooth Banach spaces. We improve earlier bounds in \cite{Tem14} for
dictionaries satisfying a new property introduced here. We apply these results
to derive optimal bounds 
in two natural examples of sequence spaces. In particular, optimality is obtained in the case
of the multivariate Haar system in $L^p$ with $1<p\leq 2$, under the Littlewood-Paley norm.
\end{abstract}

\maketitle

\section{Introduction}

This paper is devoted to theoretical aspects of sparse approximation. The main motivation for the study of sparse approximation is that many real world signals can be well approximated by sparse ones. In a general setting 
we are working 
in a Banach space $\SX$ with a redundant system of elements (dictionary) $\cD$. There is a solid justification of the importance of a Banach space setting in numerical analysis in general, and in sparse approximation in particular; see, for instance, \cite[Preface]{Tem11}. An element (function, signal) $f\in \SX$ is said to be $N$-sparse with respect to $\cD$ if
it has a representation $f=\sum_{j=1}^Nc_jg_j$,  where $g_j\in \cD$ and $c_j$ is a scalar, $j=1,\dots,N$. The set of all $N$-sparse elements is denoted by $\Sigma_N(\cD)$. For a given element $f$ we introduce the error of best $N$-term approximation
$$
\sigma_N(f,\cD) := \inf_{a\in\Sigma_N(\cD)} \|f-a\|.
$$
In a general setting 
one studies algorithms (approximation methods) $\A = \{A_N(\cdot,\cD)\}_{N=1}^\infty$ with respect to a given dictionary $\cD$. 
These mappings must satisfy that $A_N(f,\cD)\in\Sigma_N(\cD)$, for all $f\in\SX$;
in other words, $A_N$ provides an $N$-term approximant with respect to $\cD$. It is clear that for any $f\in\SX$ and any $N$ we have
$
\|f-A_N(f,\cD)\| \ge \sigma_N(f,\cD).
$
We are interested in such pairs $(\cD,\A)$ for which the algorithm $\A$ provides approximation close to the best $N$-term approximation. We introduce the corresponding definition (see \cite{Tem18}, p.423).
Let $\vartheta(u)$ be a  function such that 
$\vartheta(u)\ge 1$. 
\begin{definition}\label{ID1} We say that $\cD$ is a $\vartheta$-greedy dictionary with respect to $\A$ if there exists a constant $C$ such that for any $f\in \SX$ and all $N\in \N$ we have
\begin{equation}\label{I1}
\|f-A_{\vartheta(N)N}(f,\D)\| \le C\sigma_N(f,\D).
\end{equation}
\end{definition}
In the case $\vartheta(u)=C_0\geq 1$ is a constant we call $\cD$ an almost greedy dictionary with respect to $\A$, and in the case $C_0=1$ we call $\cD$ a greedy dictionary. If $\cD=\Psi$ is a basis then in the above definitions we replace dictionary by basis. Inequalities of the form (\ref{I1})  are called  \emph{Lebesgue-type inequalities}. 

 In the case that $\A=\{G_N(\cdot,\Psi)\}_{N=1}^\infty$ is the Thresholding Greedy Algorithm (TGA), the theory of greedy and almost greedy bases is well developed (see \cite{Tem11} and \cite{VTsparseb}). We remind that if $\Psi=\{\psi_k\}_{k=1}^\infty$ is a normalized basis in $\SX$, and $f=\sum_{k=1}^\infty c_k\psi_k\in\SX$, then the TGA at the $N$th iteration 
gives an approximant  
$$
G_N(f,\Psi):=\sum_{j=1}^N c_{k_j}\psi_{k_j}, \quad \mbox{where}\quad|c_{k_1}|\ge |c_{k_2}| \ge \dots.
$$
In particular, it is known  (see \cite{Tem11}, p.17) that the univariate Haar system is a greedy basis with respect to TGA in $L^p$, for all $1<p<\infty$. Also, it is known that the TGA does not work well with respect to the trigonometric system (see, for instance, \cite[Ch. 8]{Tem18}).  It was demonstrated in the paper \cite{Tem14} (see also \cite{Tem18}, Ch.8) that the \emph{Weak Chebyshev Greedy Algorithm} (WCGA), which we define momentarily, works very well for a special class of dictionaries, which includes the trigonometric system.   

In this paper we further develop recent results from \cite{Tem14} concerning Lebesgue-type inequalities 
for the  WCGA in the context of uniformly smooth Banach spaces. We mostly concentrate on the case when WCGA is applied with respect to a basis. 
 We also emphasize that, although the theory of these algorithms is typically stated for \emph{real} Banach spaces,
our results are actually valid for both, real or complex Banach spaces; in particular 
they can be applied to the standard (complex) trigonometric basis.


\

We now recall how this algorithm is defined; see \cite{Tem01} or \cite[Chapter 6.2]{Tem11}.  
Let $(\SX,\|\cdot\|)$ be a Banach space over $\SK=\SR$ or $\SC$.
Given $f\in\SX\setminus\{0\}$, let $F_f$ be an associated norming functional in $\SX^*$, that is,
\Be
\|F_f\|_{\SX^*}=1,\mand F_f(f)=\|f\|.
\label{F_f}
\Ee
Such functionals always exist by the Hahn-Banach theorem, and are unique provided the norm $\|\cdot\|$ is \emph{smooth}.
Here we shall assume the stronger property that $\|\cdot\|$ is \emph{uniformly smooth of power type}, that is, 
there exists $q>1$ and a constant $\ga>0$ such that
\Be
\rho(t)\leq \,\ga\, t^q,\quad t>0,
\Ee
where $\rho(t)$ is the associated modulus of smoothness, given by
\Be
2\rho(t)=\sup_{\|f\|=\|g\|=1} \Big(\|f+tg\|+\|f-tg\|-2\|f\|\Big),\quad t\in\SR.
\label{rho}
\Ee 

Let $\cD=\{\phi_i\}_{i\in\cI}\subset\SX$ be a fixed \emph{dictionary} in $\SX$, that is, a subset of unit vectors with dense span. 
We also fix a weakness parameter $\tau\in(0,1]$. 
The WCGA associated with $(\SX,\|\cdot\|,\cD,\tau)$ is defined as follows. 

\sline {\bf Weak Chebyshev Greedy Algorithm (WCGA).} Given $f\in \SX\setminus\{0\}$, we let $f_0:=f$ and define inductively vectors $\phi_{i_1},\ldots, \phi_{i_n}$ in $\cD$ and $f_1,\ldots, f_n\in\SX$ by the following procedure:
at step $n+1$ we pick any  $\phin\in\cD$ such that
\[
|F_{f_n}(\phin)|\geq \,\tau\,\sup_{\phi\in\cD}|F_{f_n}(\phi)|,
\] 
and let $\G_{n+1}(f)$ be any element in $[\phi_{i_1},\ldots,\phi_{i_{n+1}}]$ such that
\[
\|f-\G_{n+1}(f)\|=\dist\big(f,[\phi_{i_1},\ldots,\phi_{i_{n+1}}]\big).
\]
Then we set $f_{n+1}=f-\G_{n+1}(f)$, and iterate the process (indefinitely, or until the remainder $f_{n+1}=0$). 

It is known from \cite{Tem01} that $\|f-\G_{n}(f)\|\to 0$ as $n\to\infty$, for all $f\in\SX$, provided that $(\SX,\|\cdot\|)$ is uniformly smooth.
More recently, it has been shown in \cite{Tem14} that \emph{Lebesgue-type inequalities} hold for this algorithm.
In this paper it is convenient to formulate them as follows: we search for functions $\oldphi:\SN\to\SN$ and constants $C>0$ such that,
for all $N\geq1$ it holds
\Be
\big\|f-\G_{\oldphi(N)}(f)\|\leq \,C\,\sigma_N(f), \quad \forall\,f\in\SX.
\label{oldphi}
\Ee
Note, in particular, that \eqref{oldphi} implies exact recovery for each $f\in\Sigma_N$ after $\oldphi(N)$ steps. 
Since the dictionary $\cD$ is fixed we omit it from the notation, that is, $\Sigma_N=\Sigma_N(\cD)$
and $\sigma_N(f)=\sigma_N(f,\cD)$. 

We now state a result from \cite{Tem14} which gives bounds for such functions $\oldphi$
in terms of suitable parameters depending on $(\SX,\|\cdot\|,\cD)$; see also \cite[Section 8.7]{Tem18}.
These parameters are quantified by the following properties.

\subsection*{Property A2}
 We say that $(\SX,\cD)$ satisfies property $\Au(U)$, with parameter $U\geq1$, if
\Be
\big\|\sum_{i\in A}a_i\phi_i\big\|\,\leq U\,\big\|\sum_{i\in B} a_i\phi_i\big\|,\quad 
\label{A2}
\Ee
for all finite sets $A\subset B$, and all scalars $a_i\in\SK$.
When \eqref{A2} holds  only for sets with $|A|\leq K$, we say that $\Sigma_K$ has the property $\Au(U)$.

\subsection*{Property A3}  We say that $(\SX,\cD)$ satisfies property $\At(r, V)$, with parameters $r\in(0,1]$ and $V\geq1$, if
\Be
\sum_{i\in A}|a_i|\leq V\,|A|^r\,\big\|\sum_{i\in B} a_i\phi_i\big\|,\quad 
\label{A3}
\Ee
for all finite sets $A\subset B$, and all scalars $a_i\in\SK$.
When \eqref{A3} holds  only for sets with $|A|\leq K$, we say that $\Sigma_K$ has the property $\At(r, V)$. 

A brief discussion on the meaning of these properties and some examples, is given in \cite{Tem14}; see also \cite[section 8.7]{Tem18} or
 \S\ref{S_A2A3} below. 
The next result is a special case of Theorem 2.8 in \cite{Tem14}; see also \cite[Theorem 8.7.18]{Tem18}.

\begin{theorem}\label{th_tem14}
Let $N\geq 1$ and assume that $(\SX,\|\cdot\|,\cD)$ has the following properties
\Benu
\item[(i)] $\rho(t)\leq \ga t^q$,  for some $\ga>0$ and $q\in(1,2]$;
\item[(ii)]  
$\Sigma_N$ satisfies property $\At(r, V)$, for some $r\in(0,1]$ and $V>0$;
\item[(iii)] 
$\Sigma_N$ satisfies property $\Au(U)$, for some $U\geq1$.
\Eenu
Then, there is a universal constant $C\geq1$ such that, for every $\tau\in(0,1]$, the Lebesgue-type inequality in \eqref{oldphi} holds with
\Be
\oldphi(N)\,=\,\big\lfloor 
C_1(\tau,\ga,q)\,\big(\log(U+1)\big)\,V^{q'}\, N^{rq'}\big\rfloor,
\label{phiUV}
\Ee
where $C_1(\tau,\ga,q)=C_2(q)\,\ga^{q'/q}\,\tau^{-q'}$.
\end{theorem}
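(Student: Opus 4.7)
My plan is to follow the standard three-stage analysis of the WCGA in uniformly smooth Banach spaces, sharpened by the quantitative hypotheses $\Au(U)$ and $\At(r,V)$.

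\emph{Step 1 (one-step decay).} From the smoothness hypothesis $\rho(t)\leq \gamma t^q$, I would compare $\|f_{m-1}-\lambda\phi_{i_m}\|$ with $\|f_{m-1}\|$ using \eqref{rho} and optimize in $\lambda>0$ to obtain
\[
\|f_m\|\leq \|f_{m-1}\|\Big(1 - c_1(q,\gamma)\,\tau^{q'}|F_{f_{m-1}}(\phi_{i_m})|^{q'}\Big),
\]
a standard WCGA recursion. This reduces the task to a lower bound on $A_m:=|F_{f_{m-1}}(\phi_{i_m})|$.

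\emph{Step 2 (lower bound on $A_m$).} Fix $g=\sum_{i\in A}c_i\phi_i\in\Sigma_N$ with $\|f-g\|\leq\eta$ close to $\sigma_N(f)$, and write $I_{m-1}=\{i_1,\ldots,i_{m-1}\}$. Since $\G_{m-1}(f)$ is the Chebyshev projection of $f$ onto $V_{m-1}:=[\phi_{i_1},\ldots,\phi_{i_{m-1}}]$, the norming functional $F_{f_{m-1}}$ vanishes on $V_{m-1}$, so
\[
\|f_{m-1}\|=F_{f_{m-1}}(f-g)+\sum_{i\in A\setminus I_{m-1}}c_iF_{f_{m-1}}(\phi_i)\leq \eta+\sup_{\phi\in\cD}|F_{f_{m-1}}(\phi)|\cdot\sum_{i\in A\setminus I_{m-1}}|c_i|.
\]
Property $\At(r,V)$ (with $A_0=A\setminus I_{m-1}\subset A=B_0$) bounds the tail $\ell^1$-sum by $V N^r\|g\|$, and weak greediness then gives
\[
A_m\;\geq\;\frac{\tau(\|f_{m-1}\|-\eta)}{V N^r\|g\|},\qquad \|f_{m-1}\|\geq\eta.
\]

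\emph{Step 3 (iteration and the $\log(U+1)$ factor).} Feeding this lower bound into the one-step decay, while $\|f_{m-1}\|\geq 2\eta$ one extracts a Bernoulli-type recursion whose direct telescoping yields the crude estimate $\|f_M\|\leq C\eta$ as soon as $M\gtrsim (VN^r/\tau)^{q'}(\|g\|/\eta)^{q'}$. The main obstacle, and the sole source of the $\log(U+1)$ factor in \eqref{phiUV}, is eliminating the uncontrolled ratio $(\|g\|/\eta)^{q'}$. Following the scheme of \cite{Tem14}, I would set up a dyadic phase analysis: partition $[2\eta,\|f_0\|]$ into intervals $[2^k\eta,2^{k+1}\eta]$, and within each phase use property $\Au(U)$ to re-estimate intermediate tails such as $\|\sum_{i\in A\setminus I_m}c_i\phi_i\|\leq U\|g\|$, so that the relevant norm ratios do not deteriorate across phases and each phase costs a uniform $O((VN^r/\tau)^{q'})$ iterations. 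A preliminary application of $\Au(U)$ also caps the initial ratio $\|f_0\|/\eta$, so that at most $\log(U+1)$ phases are needed to reach $\|f_M\|\leq C\sigma_N(f)$. This delicate phase bookkeeping is the technical crux and yields $\varphi(N)\lesssim \log(U+1)\,V^{q'}N^{rq'}$, as claimed.
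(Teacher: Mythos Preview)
Your Steps 1 and 2 follow the standard route, but Step 2 misses a refinement that turns out to be essential. Rather than bounding the tail $\ell^1$-sum by $VN^r\|g\|$, the paper (following \cite{Tem14}) applies $\At(r,V)$ to the vector $g-\G_{m-1}(f)$, whose coefficients on $A\setminus I_{m-1}$ agree with those of $g$; this yields $\sum_{i\in A\setminus I_{m-1}}|c_i|\leq VN^r\big(\eta+\|f_{m-1}\|\big)$. The resulting recursion is self-normalizing: when $\|f_{m-1}\|\geq 2\eta$ the decay factor is a fixed multiple of $(VN^r)^{-q'}$, with no $\|g\|$ present. With your bound the decay factor carries $(\|g\|/\|f_{m-1}\|)^{q'}$, which deteriorates as the residual shrinks and produces exactly the uncontrolled ratio you then try to repair in Step 3.

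Step 3 contains a genuine gap. The assertion that ``a preliminary application of $\Au(U)$ caps the initial ratio $\|f_0\|/\eta$'' is false: $\Au$ concerns only finite combinations from $\cD$ and gives no control on $\|f\|/\sigma_N(f)$, which may be arbitrarily large. A dyadic partition of $[2\eta,\|f_0\|]$ therefore has $\log(\|f_0\|/\eta)$ phases, not $\log(U+1)$; and your use of $\Au$ to get $\|\sum_{i\in A\setminus I_m}c_i\phi_i\|\leq U\|g\|$ is the trivial direction and buys nothing. The paper's route (carried out for the more general Theorem~\ref{th_Ds}, with the second stage deferred verbatim to \cite{Tem14}) is structurally different: one first establishes, for \emph{every} disjoint decomposition $T_k=A\cup B$ of the residual support of $\Phi$ at an earlier step $k\leq m$, the block inequality
\[
\|f_{m+M}\|\leq e^{-c_2M/|A|^{rq'}}\|f_m\|+2\big(\|\Phi-f\|+\|\Phi_B\|\big),
\]
and then runs an induction choosing sets $A_j,B_j$ according to the \emph{magnitudes of the coefficients} of $\Phi$ (not the size of the residual). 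Property $\Au(U)$ enters to control the tail norms $\|\Phi_{B_j}\|$ at each stage, and it is this coefficient-based scheme---not a partition of residual values---that yields $O(\log U)$ stages of cost $O(V^{q'}N^{rq'})$ each.
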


\BR
\label{R_VN}
A similar result holds with $U$ replaced by $VN$ in \eqref{phiUV}, since 
$\At(V,r)$ for $\Sigma_N$  implies $\Au(U)$ with $U=VN$; see \cite[Theorem 2.7]{Tem14}.
\ER


\

In this paper we elaborate further on these results in the following directions.
First we replace the condition (i) on $(\SX,\|\cdot\|)$, by a less demanding condition involving as well the system $\cD$.

\begin{definition}\label{D_Ds}{\rm
 We say that $(\SX,\|\cdot\|,\cD)$ has the property $\D(s, c_1)$, for some parameters $c_1>0$ and $s>1$, if 
\Be
\dist(f,[\phi])\leq \|f\|\,\big(1-c_1|F_f(\phi)|^{s}\big),
\label{distFf}
\Ee
for all $f\in\SX\setminus\{0\}$ and all $\phi\in\cD$.
}
\end{definition}
 
We shall show in Proposition \ref{P_dist} below that $\rho(t)\leq \ga t^q$ implies \eqref{distFf} 
with $s=q'$ and some $c_1=c_1(\ga, q)$, actually for all  $\|\phi\|=1$. 
However, for certain $(\SX,\cD)$ it may happen $\D(s,c_1)$ holds with a better (smaller) value of $s$.
For example, if $\SX=\ell^p$, {\small $1<p<\infty$}, and $\cD$ is the canonical basis, then $\D(s,c_1)$ holds with $s=p'$, 
while the modulus of smoothness has power type $q=\min\{p,2\}$. This, and the more general example $\SX=\ell^p(\ell^q)$, will be discussed in 
\S\ref{S_lpq} below.

Using this new concept we shall show the following improvement over Theorem \ref{th_tem14}.

\begin{theorem}\label{th_Ds}
Let $N\geq1$ and assume that $(\SX,\|\cdot\|,\cD)$ has the following properties
\Benu
\item[(i)] $(\SX,\cD)$ satisfies property $\D(s, c_1)$, for some $s>1$ and $c_1>0$;
\item[(ii)] $\Sigma_N$ satisfies property $\At(r, V)$, for some $r\in(0,1]$ and $V>0$;
\item[(iii)] $\Sigma_N$ satisfies property $\Au(U)$, for some $U\geq1$.
\Eenu
Then, there is a universal constant $C\geq1$ such that, for every $\tau\in(0,1]$,
 the Lebesgue-type inequality in \eqref{oldphi} holds with
\Be
\oldphi(N)\,= \,\big\lfloor 
C'_1(\tau,c_1,s)\,\big(\log(U+1)\big)\,V^{s}\, N^{rs}\big\rfloor,
\label{phiUVs}
\Ee
where $C'_1(\tau,c_1,s)=C'_2(s)\,c_1^{-1}\,\tau^{-s}$.
\end{theorem}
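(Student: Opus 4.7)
The plan is to adapt the proof of Theorem \ref{th_tem14} (i.e., Theorem~2.8 of \cite{Tem14}) by isolating the single point where the smoothness assumption $\rho(t)\leq\gamma t^q$ is used, and replacing it with property $\D(s,c_1)$. As Proposition \ref{P_dist} below makes explicit, smoothness enters the Temlyakov argument exclusively through the inequality $\dist(f,[\phi])\leq\|f\|\bigl(1-c_1|F_f(\phi)|^{q'}\bigr)$, which is precisely property $\D(q',c_1(\gamma,q))$. Under our hypothesis (i) we have the same estimate with exponent $s$ and constant $c_1$, so the rest of the proof should go through with the substitution $q'\leadsto s$, $c_1(\gamma,q)\leadsto c_1$.

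The core of the argument proceeds as follows. First, by the Chebyshev optimality of $\G_{n+1}(f)$ in $[\phi_{i_1},\ldots,\phi_{i_{n+1}}]$, we have $\|f_{n+1}\|\leq\dist(f_n,[\phi_{i_{n+1}}])$; applying property $\D(s,c_1)$ to $f_n$ and $\phi_{i_{n+1}}$ yields the key recursion
\[
\|f_{n+1}\|\,\leq\,\|f_n\|\bigl(1-c_1|F_{f_n}(\phi_{i_{n+1}})|^{s}\bigr).
\]
The WCGA rule gives $|F_{f_n}(\phi_{i_{n+1}})|\geq\tau\sup_{\phi\in\cD}|F_{f_n}(\phi)|$, so the task reduces to producing a useful lower bound on $\sup_{\phi\in\cD}|F_{f_n}(\phi)|$ in terms of $\|f_n\|$, $\sigma_N(f)$, and the basis parameters.

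To obtain that lower bound, fix a near-optimal $g=\sum_{i\in B}b_i\phi_i\in\Sigma_N$ with $\|f-g\|\leq(1+\epsilon)\sigma_N(f)$. Using $F_{f_n}(f)=\|f_n\|$ together with Chebyshev orthogonality ($F_{f_n}$ vanishes on $[\phi_{i_j}:1\leq j\leq n]$ in the uniformly smooth setting), one writes $F_{f_n}(g)=F_{f_n}(g-P_{A_n}(g))$, where $A_n=\{i_1,\ldots,i_n\}$. Property $\At(r,V)$ applied to $B\setminus A_n$ (of cardinality $\leq N$) bounds $\sum_{i\in B\setminus A_n}|b_i|$, while property $\Au(U)$ provides the necessary control on the relevant norms. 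Feeding the resulting lower bound back into the key recursion leads to a differential-type inequality
\[
\|f_{n+1}\|^{-s}\,\geq\,\|f_n\|^{-s}\,+\,\frac{c\,c_1\,\tau^{s}}{(V N^r)^{s}}\,\cdot\,(\text{correction factor involving }U),
\]
and the dyadic telescoping argument of \cite{Tem14} then produces the claimed value of $\oldphi(N)$ in \eqref{phiUVs}.

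The main obstacle I anticipate is essentially bookkeeping: one must verify that every appearance of the smoothness exponent $q$ in the derivation of \cite{Tem14} is only through $q'$ and the constant $c_1(\gamma,q)$ in the recursion of step~(1). Once that is confirmed, the substitution to exponent $s$ and constant $c_1$ is transparent and produces the stated form $C'_1(\tau,c_1,s)=C'_2(s)\,c_1^{-1}\,\tau^{-s}$, together with the dependence $V^{s}N^{rs}\log(U+1)$ in \eqref{phiUVs}.
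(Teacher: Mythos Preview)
Your proposal is correct and takes essentially the same approach as the paper: both observe that the smoothness assumption $\rho(t)\leq\gamma t^q$ enters the argument of \cite{Tem14} solely through the distance inequality $\dist(f,[\phi])\leq\|f\|(1-c_1|F_f(\phi)|^{q'})$, replace it by property $\D(s,c_1)$ with the substitution $q'\leadsto s$, and then defer the remaining combinatorial part (the choices of sets $A_j$, $B_j$ and the dyadic telescoping) to \cite{Tem14} verbatim. The paper differs only in presentation: it re-derives the iteration inequality (the analogue of \cite[Theorem~2.3]{Tem14}) explicitly in the form $\|f_{m+M}\|\leq e^{-c_2 M/|A|^{rs}}\|f_m\|+2(\|\Phi-f\|+\|\Phi_B\|)$ rather than the power-type recursion you sketch, but this is cosmetic and the logic is the same.
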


As an application, we shall find {\bf optimal} Lebesgue type inequalities for
some special pairs $(\SX,\cD)$. 
We remark that no lower bounds for functions $\oldphi$ satisfying \eqref{oldphi}
seemed to appear earlier in the literature (besides the trivial $\oldphi(N)\geq N$).
Our first example concerns the space $\SX=\ell^p(\ell^q)$ with $\cD$ the canonical basis. 

\begin{theorem}\label{th_lpq}
Let $\SX=\ell^p(\ell^q)$, with $1<p,q<\infty$, and $\cD$ be the canonical basis.
Let
\[
\beta=\beta(p,q)=\max\Big\{\frac{p'}{q'}, \frac{q'}{p'}\Big\}.
\]
Then, there exists $c=c(p,q,\tau)>0$ such that the WCGA satisfies \eqref{oldphi}
with
\[
\oldphi(N)\,=\,\big\lfloor 
c\, N^{\beta}\big\rfloor. 
\]
Moreover, suppose that 
\Be
\|\bx-\G_{\psi(N)}\bx\|\leq \,C\,\sigma_N(\bx),\quad\forall\;N\geq1,\;\;\bx\in\SX.
\label{Gphia}
\Ee
Then $\psi(N)\geq\,c'\, N^{\beta}$, for some $c'>0$.
\end{theorem}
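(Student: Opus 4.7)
The proof splits into an upper bound via Theorem~\ref{th_Ds} and a matching lower bound via an explicit ``trap'' construction.

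For the upper bound, the three hypotheses of Theorem~\ref{th_Ds} must be verified for $(\ell^p(\ell^q),\cD)$. Property $\Au(1)$ is immediate, since the canonical basis is $1$-unconditional. Property $\At(r,1)$ with $r=\max\{1/p',1/q'\}$ follows from two applications of H\"older---first within each row, then across rows---combined with the elementary inequality $(\sum_i n_i^{p'/q'})^{1/p'}\le|A|^{\max\{1/p',1/q'\}}$, where $n_i$ is the cardinality of $A$ in row $i$. The main technical point is property $\D(s,c_1)$ with the sharp exponent $s=\max\{p',q'\}$. A direct $1$D minimization gives
\[
\dist(f,[e_{i_0,k_0}])^p=\|f\|^p-\|f_{i_0}\|_q^p+\bigl(\|f_{i_0}\|_q^q-|c_{i_0,k_0}|^q\bigr)^{p/q},
\]
and combining with the bound $(1-u)^{p/q}\le 1-c_{p,q}\,u$ on $[0,1]$ (with a positive constant $c_{p,q}>0$, valid in both regimes $p\ge q$ and $p\le q$) yields $\dist(f,[\phi])^p\le\|f\|^p-c\|f_{i_0}\|_q^{p-q}|c_{i_0,k_0}|^q$. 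Setting $t:=|c_{i_0,k_0}|/\|f_{i_0}\|_q\in(0,1]$ and comparing with $|F_f(\phi)|^s=\|f_{i_0}\|_q^{s(p-q)}|c_{i_0,k_0}|^{s(q-1)}\|f\|^{-s(p-1)}$ reduces $\D$ to the uniform lower bound $\|f_{i_0}\|_q^{p-s(p-1)}\,t^{q-s(q-1)}\ge c_1^{-1}$ on $(0,1]^2$, which holds precisely when both exponents are $\le 0$, i.e.\ $s\ge\max\{p',q'\}$. One then computes $rs=\max\{p'/q',q'/p'\}=\beta$, and Theorem~\ref{th_Ds} delivers the upper bound $\oldphi(N)=\lfloor cN^\beta\rfloor$.

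For the lower bound I would treat the case $p>q$ (so $\beta=q'/p'$; the case $p<q$ is handled by the analogous construction interchanging rows and columns). Fix a small constant $c_0=c_0(p,q,C,\tau)>0$ to be chosen, and set $M:=\lfloor c_0 N^\beta\rfloor$, $\epsilon:=2M^{-(p-q)/(q(p-1))}$, and
\[
\bx:=\sum_{j=1}^N e_{j,1}\;+\;\epsilon\sum_{k=1}^M e_{N+1,k}.
\]
A direct computation yields $|F_\bx(e_{N+1,k})|/|F_\bx(e_{j,1})|=2^{p-1}$; more generally, after $T$ successive Chebyshev projections onto noise coordinates $e_{N+1,\cdot}$ (each of which simply subtracts $\epsilon e_{N+1,k}$ from the residual), this ratio becomes $2^{p-1}(1-T/M)^{(p-q)/q}$, which remains $\ge 1$ for all $T\le M(1-\delta)$ with $\delta:=2^{-(p-1)q/(p-q)}$. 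Hence, for any $\tau\in(0,1]$, selecting noise coordinates sequentially is a valid WCGA realisation throughout this range, and the sparse sum $\sum_{j=1}^N e_{j,1}$ remains intact in the residual, giving $\|\bx-\G_T\bx\|^p\ge N$. On the other hand, removing the sparse part yields $\sigma_N(\bx)\le\epsilon M^{1/q}=2M^{1/(p\beta)}$, so $\sigma_N(\bx)^p\le 2^p c_0^{1/\beta}N$. Choosing $c_0$ so that $(2C)^p c_0^{1/\beta}<1$, we obtain $\|\bx-\G_T\bx\|^p\ge N>(C\sigma_N(\bx))^p$ for every $T\le M(1-\delta)$, forcing $\psi(N)\ge c'N^\beta$.

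The chief obstacle is pinning down the sharp exponent $s=\max\{p',q'\}$ in property $\D$: the naive bound from Theorem~\ref{th_tem14} using the modulus of smoothness would only give $s=(\min\{p,q,2\})'$, which in general strictly exceeds $\max\{p',q'\}$ and misses the correct scaling $N^\beta$. A secondary subtlety is calibrating $M$ and $\epsilon$ in the lower-bound construction so that WCGA is simultaneously (a) trapped into exhausting most of the noise before touching the sparse mass, and (b) unable to approach $C\sigma_N(\bx)$ during that entire phase.
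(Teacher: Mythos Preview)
Your proposal is correct and follows essentially the same route as the paper. For the upper bound you verify $\Au(1)$, $\At(r,1)$ with $r=\max\{1/p',1/q'\}$, and $\D(s,c_1)$ with $s=\max\{p',q'\}$, then invoke Theorem~\ref{th_Ds}; this is exactly the paper's Corollary obtained from Lemma~\ref{Lpq2} and Proposition~\ref{P_distlpq}. Your computation for $\D$ is the same as the paper's (the paper also reduces to $\Delta^{p-s(p-1)}t^{q-s(q-1)}\ge 1$ via the explicit form of $F_\bx(\be_{j,k})$), and your H\"older argument for $\At$ is a valid alternative to the paper's one-line inclusion $\ell^p(\ell^q)\hookrightarrow\ell^{\max\{p,q\}}$.

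For the lower bound your construction is the same as the paper's Proposition~\ref{P_psi}: a block of $N$ signal coordinates placed one-per-row together with $M$ noise coordinates in a single row, with the noise amplitude tuned so that $|F_{\bx}|$ favors the noise. The paper writes the two regimes using a single vector $\bbone_A+n^\alpha\bbone_B$ and swaps which of $A,B$ the WCGA consumes; your version for $p>q$ is identical up to the cosmetic choice of putting the factor $2$ in $\epsilon$ rather than doubling $|B|$. One small caution: ``interchanging rows and columns'' is not literally a symmetry of $\ell^p(\ell^q)$, so for $p<q$ you should instead swap the \emph{roles} of the two blocks (signal in one row, noise one-per-row) and rerun the same computation; this is precisely what the paper does, and your calculations go through unchanged with the ratio now constant along the iteration.
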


Concerning this example, we shall also compare the WCGA with the usual TGA, showing that the former performs better when $p>q'$;
see Figure \ref{fig1} below.

Our second application of Theorem \ref{th_Ds} regards the $d$-variate Haar system in $\SX=L^p([0,1]^d)$.
Here, however, we must renorm the space to obtain new results. We consider the Littlewood-Paley renorming, defined as follows.
Let $\cH^d_p=\cH_p\times\ldots\times\cH_p=\{H_n\}_{n\geq0}$ denote the $L^p$-normalized $d$-variate Haar system (ie, the tensor product of the 1-dimensional Haar basis $\cH_p$ in $[0,1]$). Given  
$f=\sum_{n\geq0} c_{n}(f)H_{n}\in L^p$,  we let
\Be
\tri{f}:=\big\|S(f)\big\|_{L^p}, \quad \mbox{where}\quad S(f)=\Big(\sum_{n\geq0}|c_{n}(f)H_{n}|^2\Big)^\frac12.
\label{tri1}
\Ee
This defines an equivalent norm in $L^p([0,1]^d)$, provided $1<p<\infty$.
Our next result gives the optimal growth for the Lebesgue type functions when $1<p\leq 2$.

\begin{theorem}\label{th_haar}
Let $\SX=L^p([0,1]^d)$, $1<p\leq 2$, endowed with the norm $\tri{\cdot}$,
and let $\cD=\cH^d_p$ be the Haar basis.
Let
\[
h(p,d)=\,(d-1)\Big|\frac12-\frac1p\Big|.
\]
Then,  the WCGA satisfies \eqref{oldphi}
with
\[
\oldphi(N)\,=\,\big\lfloor c\,(1+\log N)^{p'h(p,d)}\, N\big\rfloor,
\]
for some $c=c(p,d,\tau)>0$. Moreover, suppose that 
\Be
\tri{f-\G_{\psi(N)}f}\leq \,C\,\sigma_N(f),\quad\forall\;N\geq1,\;\; f\in\SX.
\label{Gpsi_tri}
\Ee
Then $\psi(N)\geq\,c'\,(1+\log N)^{p'h(p,d)}\, N$, for some $c'>0$.
\end{theorem}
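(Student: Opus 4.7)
\textbf{Upper bound via Theorem \ref{th_Ds}.} The plan is to apply Theorem \ref{th_Ds} to the triple $(L^p,\tri{\cdot},\cH^d_p)$ with parameters $s=p'$, $r=1/p'$, $V\asymp(1+\log N)^{h(p,d)}$ and $U=1$. Property $\D(p',c_1)$ follows from Proposition \ref{P_dist}, because $\tri{\cdot}$ is equivalent to $\|\cdot\|_{L^p}$, whose modulus of smoothness is of power type $p$ for $1<p\leq 2$. Property $\Au(1)$ is immediate because the Littlewood-Paley square function is pointwise monotone in the set of summands:
\Be
\tri{\sum_{i\in A}a_iH_i}=\Big\|\Big(\sum_{i\in A}|a_iH_i|^2\Big)^{1/2}\Big\|_{L^p}\leq\Big\|\Big(\sum_{i\in B}|a_iH_i|^2\Big)^{1/2}\Big\|_{L^p}=\tri{\sum_{i\in B}a_iH_i}
\Ee
whenever $A\subset B$. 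This $1$-unconditionality is the main motivation for renorming, since it collapses the $\log(U+1)$ factor in \eqref{phiUVs} to a constant. Substituting $rs=1$, $V^s=(1+\log N)^{p'h(p,d)}$ and $\log(U+1)\lesssim 1$ into \eqref{phiUVs} then yields the claimed upper bound on $\oldphi(N)$.

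\textbf{Property $\At(1/p',V)$.} The technical core is verifying $\At(1/p',V)$ on $\Sigma_N$ with $V\asymp(1+\log N)^{h(p,d)}$; this quantifies the logarithmic failure of democracy of $\cH_p^d$ under $\tri{\cdot}$. To establish it, I would partition an $N$-element set $A$ of Haar indices by dyadic scale and by type (there are $2^d-1$ nontrivial Haar types per scale), apply $L^p\leftrightarrow \ell^2$ comparisons type-by-type, and sum across the at most $O(\log N)$ scales that can contribute. The exponent $h(p,d)=(d-1)|1/2-1/p|$ is expected to arise because on each scale the $\ell^2$-norm of $(d-1)$-fold tensor factors differs from the $L^p$-norm by a factor of size $|1/2-1/p|$ per auxiliary direction.

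\textbf{Lower bound.} For the lower bound, I would construct, for each $N$, a test function
\Be
f_N=\sum_{H\in E_1}H+\lambda\sum_{H\in E_2}H,
\Ee
where $E_1$ is an extremal set of $N$ Haar functions realizing equality (up to constants) in $\At(1/p',V)$, and $E_2$ is a set of $M\asymp(1+\log N)^{p'h(p,d)}N$ Haar functions at a single scale and cube type so that $\tri{\sum_{H\in E_2}H}$ is numerically dominant. With $\lambda$ tuned so that $\sigma_N(f_N)$ is controlled by $\lambda\,\tri{\sum_{E_2}H}$, the WCGA is forced to select the dominant $E_2$-terms first (since these maximize the norming functional), and needs $\asymp M$ iterations before the residual can drop to a constant multiple of $\sigma_N(f_N)$. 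Plugging $f_N$ into \eqref{Gpsi_tri} then forces $\psi(N)\geq c'(1+\log N)^{p'h(p,d)}N$.

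\textbf{Main obstacle.} The hardest step is the sharp determination of the exponent $h(p,d)=(d-1)|1/2-1/p|$ in both directions simultaneously: the upper bound for $\At(1/p',V)$ and the matching extremal construction must produce the same power of $\log N$. Both require careful bookkeeping of the $(2^d-1)$ Haar types per scale, together with a fine analysis of the Littlewood-Paley square function on $d$-variate tensor Haar sums whose coefficients are distributed across nested dyadic scales. The WCGA iteration tracking needed for the lower bound is standard once the extremal $f_N$ is in hand; the combinatorial/analytic content packaged into $h(p,d)$ is the heart of the theorem.
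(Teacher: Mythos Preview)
Your overall strategy matches the paper's: reduce to the sequence space $\fpt(\bRd)$ (which is isometric to $(L^p,\tri{\cdot})$ via the coefficient map), then apply Theorem \ref{th_Ds} with $s=p'$, $r=1/p'$, $U=1$, and $V\asymp(1+\log N)^{h(p,d)}$ for the upper bound, and build an explicit test vector for the lower bound. However, two of your justifications contain genuine errors.

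\textbf{Smoothness via norm equivalence is invalid.} You claim $\D(p',c_1)$ follows from Proposition \ref{P_dist} ``because $\tri{\cdot}$ is equivalent to $\|\cdot\|_{L^p}$, whose modulus of smoothness is of power type $p$.'' Equivalence of norms does \emph{not} transfer the power type of the modulus of smoothness: one can renorm a Hilbert space so that the new norm is not even G\^ateaux smooth. The paper does not use Proposition \ref{P_dist} here; it establishes $\D(p',c_1)$ directly for the $\fpt$ norm (Lemma \ref{L_Dsfpq}) via an explicit computation with the norming functional in Corollary \ref{c_Ffpq}. If you prefer the smoothness route, the correct argument is that $(L^p,\tri{\cdot})$ is \emph{isometric} to a subspace of an $L^p(\ell^2)$-type space, which by Lemma \ref{Lpq1} has power type $\min\{p,2\}=p$; mere equivalence to $\|\cdot\|_{L^p}$ is not enough.

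\textbf{Wrong Haar system.} Your sketches for $\At$ and the lower bound speak of ``$2^d-1$ nontrivial Haar types per scale'' and ``a single scale and cube type''. That is the language of the \emph{isotropic} multivariate Haar system (indexed by dyadic cubes), which is democratic in $L^p$ and would give $h=0$. The system $\cH^d_p$ in this theorem is the \emph{tensor product} basis, indexed by dyadic \emph{rectangles} $I=I_1\times\cdots\times I_d$ with independent side lengths. The log factor arises precisely because, at a fixed volume $|I|=2^{-n}$, there are $\asymp n^{d-1}$ distinct rectangle shapes. The paper's proof of $\At$ (Appendix 1, Proposition \ref{p_app1}) proceeds by duality to $\mathfrak f_{p',2}$ and an induction on $d$ that peels off one coordinate at a time; the lower bound (Theorem \ref{th_fpq2}) uses $A_n=\{I\subset[0,\tfrac12]^d: |I|=2^{-n}\}$ (all shapes) against a single-shape family $B_{\bm}\subset[\tfrac12,1]^d$, with the weight tuned via \eqref{abp} so that $|F_\bx(\be_I)|$ is constant across $A_n\cup B_\bm$. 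Your ``type-by-type at each scale'' bookkeeping would need to be replaced by this rectangle combinatorics.
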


\BR
In the case $2<p<\infty$, it is known that an analog of \eqref{Gpsi_tri}, with $\stri{\cdot}$ replaced by the standard $\|\cdot\|_{L^p}$, holds with 
\[
\psi(N)\,\leq\, c\, N^{2/p'};
\]
see \cite[Example 4]{Tem14}. However, we do not know whether this power is optimal.
\ER

The proof of Theorem \ref{th_haar}, presented in \S\ref{S_haar}, is
obtained as the special case $q=2$ of a more general statement for a class of sequence spaces $\fpq$. 
These are defined by norms as in \eqref{tri1}, but with the square function $Sf$ replaced by an $\ell^q$-function
$S_qf$; see  \S\ref{S_fpq} below for precise statements. The spaces $\ell^p(\ell^q)$ and $\fpq$ are discrete analogs of the 
Besov and Triebel-Lizorkin spaces, so our results can be transferred as well to these settings, see Remarks \ref{R_TL} and \ref{R_Besov} below.

Finally, in the last part of the paper we discuss the significance of property $\At$
for suitable classes of \emph{bases} $\cD=\Psi=\{\psi_n\}_{n\geq1}$ in $\SX=L^p$.
As a consequence of our results we shall obtain the following.

\begin{theorem}
\label{th_greedy}
Let $\SX=L^p[0,1]$, $1<p<\infty$, and let $\cD=\Psi$ be a greedy basis with respect to the TGA.
Let 
\Be
\label{alp}
\al(p)=1\quad \mbox{if {\small $1<p\leq 2$}},\mand \al(p)=2/p' \quad \mbox{if {\small $2<p<\infty$}}.
\Ee
Then, there exists a universal constant $C\geq1$ such that 
\[
\big\|f-\G_{cN^{\al(p)}}f\big\|_p\leq \,C\,\sigma_N(f),\quad\forall\;N\geq1,\;\; f\in L^p,
\] 
for some $c=c(p,\tau)>0$. 
\end{theorem}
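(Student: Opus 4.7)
The plan is to apply Theorem \ref{th_Ds} to $(\SX,\|\cdot\|,\cD)=(L^p,\|\cdot\|_p,\Psi)$ after verifying its three hypotheses uniformly in $N$, with parameters whose product $rs$ coincides with $\al(p)$. For property $\D$, the space $L^p$ with $1<p<\infty$ is uniformly smooth of power type $q=\min\{p,2\}$, so by Proposition \ref{P_dist} we obtain $\D(s,c_1)$ with $s=q'=\max\{p',2\}$ and some $c_1=c_1(p)>0$. For property $\Au$, the greedy basis $\Psi$ is unconditional in $L^p$ with some constant $K_u\geq 1$, which is exactly $\Au(K_u)$ for every $N$; in particular the $\log(U+1)$ factor in \eqref{phiUVs} contributes only a constant in $N$.

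The main work is property $\At$ with the sharp exponent $r=1/p'$. By the Konyagin--Temlyakov characterization, $\Psi$ is also democratic, so $\|\sum_{j\in A}\psi_j\|_p\asymp |A|^{1/p}$ for every finite $A\subset\N$. Combining democracy with unconditionality (first restrict to the top $k$ indices $B_k\subset B$ by the projection bound, then apply the multiplier form of unconditionality to pull out the common factor $a_k^*$, and finally use democracy on $B_k$) yields the pointwise rearrangement estimate
\[
a_k^*\,\leq\,C\,k^{-1/p}\,\Big\|\sum_{j\in B}a_j\psi_j\Big\|_p,
\]
where $a_k^*$ denotes the decreasing rearrangement of $(|a_j|)_{j\in B}$. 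Summing on $k=1,\dots,|A|$ and using $\sum_{k=1}^N k^{-1/p}\asymp N^{1/p'}$,
\[
\sum_{i\in A}|a_i|\,\leq\,\sum_{k=1}^{|A|}a_k^*\,\leq\,V\,|A|^{1/p'}\,\Big\|\sum_{j\in B}a_j\psi_j\Big\|_p,
\]
which is precisely property $\At(1/p',V)$, uniformly in $N$ and with $V=V(p,\Psi)$.

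Feeding these three properties into Theorem \ref{th_Ds} and observing that $U=K_u$ does not grow with $N$, the bound \eqref{phiUVs} yields \eqref{oldphi} with
\[
\oldphi(N)\,\leq\,c(p,\tau)\,N^{rs},\qquad rs\,=\,\tfrac{1}{p'}\max\{p',2\}\,=\,\max\{1,\tfrac{2}{p'}\}\,=\,\al(p),
\]
so the claim holds with the universal constant $C\geq 1$ inherited from Theorem \ref{th_Ds}.

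The main obstacle is Step 3: democracy alone controls only norms of $\pm1$-sums, and to obtain the sharp exponent $r=1/p'$ for arbitrary coefficients one must promote it to the rearrangement bound above via unconditionality. Once this is in place, the remaining verifications are immediate from Proposition \ref{P_dist} and from unconditionality, and the theorem becomes a bookkeeping application of Theorem \ref{th_Ds}.
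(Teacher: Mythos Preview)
Your proof is correct and reaches the same conclusion as the paper, but the route to property $\At(1/p',V)$ differs from the paper's.

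The paper argues via duality: it invokes Lemma~\ref{L_A3} to reduce $\At(r,V)$ to the bound $\|\bbone^*_{\bfe A}\|_{L^{p'}}\lesssim |A|^{1/p'}$, then uses the Dilworth--Kalton--Kutzarova--Temlyakov theorem that the dual system $\Psi^*$ is again a greedy basis of $L^{p'}$, together with the Kadec--Pe{\l}czy\'nski subsequence result to conclude that any greedy basis of $L^q$ has democracy function $\asymp N^{1/q}$. You instead stay on the primal side and obtain the weak-$\ell^p$ estimate $a_k^*\le C k^{-1/p}\|\sum_{j\in B}a_j\psi_j\|_p$ directly from unconditionality and democracy, then sum. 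This avoids the duality-of-greedy-bases step and is arguably more elementary; the paper's approach, on the other hand, packages the result neatly via the dual democracy function and makes explicit use of Lemma~\ref{L_A3}.

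One point to tighten: the Konyagin--Temlyakov characterization only gives that $\Psi$ is democratic, i.e.\ $\|\bbone_A\|_p\asymp\|\bbone_B\|_p$ when $|A|=|B|$; it does not by itself yield the specific growth $\|\bbone_A\|_p\asymp|A|^{1/p}$. That power requires the Kadec--Pe{\l}czy\'nski input (every seminormalized unconditional basis of $L^p$ has a subsequence equivalent to the unit vector basis of $\ell^p$), exactly as in the paper's Lemma~\ref{L_A3Lp}. So both arguments ultimately rest on the same structural fact about $L^p$; you should cite it rather than attribute the power $1/p$ to democracy alone.

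Finally, applying Theorem~\ref{th_Ds} rather than Theorem~\ref{th_tem14} makes no difference here, since your $\D(s,c_1)$ comes from Proposition~\ref{P_dist} with $s=q'$, which is precisely the case covered by Theorem~\ref{th_tem14}.
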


In particular, when $1<p\leq 2$, greedy bases with respect to the TGA are \emph{almost greedy} with respect to the WCGA 
(in the sense  
of Definition \ref{ID1}). For $p>2$, however, we do not know whether the power $\al(p)=2/p'$ may be improved. 
These resuls are discussed in \S \ref{S_Nik}, together with various additional statements under weaker assumptions in $\Psi$ (such as almost-greedy, quasi-greedy, etc...), where the best exponents $r$ in property $\At$ are found for each class of bases. 
We remark that, in the case of $\SX=L^p$ with $p>2$, we do not know whether, for some basis (or dictionary) $\cD$, 
the Lebesgue inequality for the WCGA may hold with $\oldphi(N)=O(N)$.

\section{Preliminaries}
\setcounter{equation}{0}\setcounter{footnote}{0}
\setcounter{figure}{0}

\subsection{Norming functionals and distance to subspaces}

Let $(\SX,\|\cdot\|)$ be a Banach space over $\SK=\SR$ or $\SC$.
Let $\rho(t)$ be the associated modulus of smoothness, given in \eqref{rho}.
Recall that $\|\cdot\|$ is called \emph{uniformly smooth} if $\rho(t)=o(t)$ as $t\to0$.
Recall also from \eqref{F_f} that $F_f\in \SX^*$ denotes the norming functional of a vector $f\in\SX\setminus\{0\}$.
In all our proofs below, the functional $\rho(t)$ will only appear via the following inequality.
\

\begin{proposition}\label{P_dt}
For all $f,g\in\SX$ with $\|f\|=\|g\|=1$  it holds
\Be
0\leq \|f+tg\|-\|f\|-t\Re[F_f(g)]\leq 2\rho(t),\quad t\in\SR.
\label{dt}
\Ee
In particular, if $\rho(t)=o(t)$, then $\Re[F_f(\cdot)]$ is the Fr\'echet derivative of $\|\cdot\|$ at $f$.
\end{proposition}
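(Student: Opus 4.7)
The plan is to derive both the lower and upper bounds directly from the defining properties of the norming functional and the modulus of smoothness, and then to read off the Fr\'echet derivative statement as an immediate consequence.

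For the \emph{lower bound}, I would argue purely from the norming property \eqref{F_f}. Since $\|F_f\|_{\SX^*}=1$ and $F_f(f)=\|f\|$, and since $|z|\geq\Re z$ for every $z\in\SC$, one has
\[
\|f+tg\|\;\geq\;|F_f(f+tg)|\;\geq\;\Re\bigl[F_f(f)+tF_f(g)\bigr]\;=\;\|f\|+t\,\Re[F_f(g)],
\]
valid for every $t\in\SR$ (this estimate does not even require unit norms).

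For the \emph{upper bound}, the trick is to apply the lower bound with $g$ replaced by $-g$ (equivalently $t$ by $-t$), which yields $\|f-tg\|-\|f\|\geq -t\,\Re[F_f(g)]$. Since $\rho$ is even by symmetry in \eqref{rho}, the definition of the modulus of smoothness gives
\[
\|f+tg\|+\|f-tg\|-2\|f\|\;\leq\;2\rho(t)\qquad (\|f\|=\|g\|=1,\;t\in\SR).
\]
Substituting the lower estimate just obtained for $\|f-tg\|$ into this inequality immediately produces $\|f+tg\|-\|f\|-t\,\Re[F_f(g)]\leq 2\rho(t)$.

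For the Fr\'echet derivative claim, for any small $h\neq 0$ I would set $t=\|h\|$, $g=h/\|h\|$, and note that $\Re[F_f(\cdot)]$ is $\SR$-linear. The two-sided inequality just established then reads
\[
0\;\leq\;\frac{\|f+h\|-\|f\|-\Re[F_f(h)]}{\|h\|}\;\leq\;\frac{2\rho(t)}{t}\;\longrightarrow\;0\quad(t\to 0^+),
\]
by the assumption $\rho(t)=o(t)$, uniformly in the direction $g$. To pass to a general $f\in\SX\setminus\{0\}$ with $\|f\|=r\neq 1$, I would write $f=rf_0$ with $\|f_0\|=1$ and observe that $F_f=F_{f_0}$, so the same inequality applies after rescaling $t$ by $r$.

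There is really no essential obstacle here: the lower bound is a direct Hahn--Banach-type estimate, and the upper bound is obtained by pairing it with the definition of $\rho$. The only subtlety worth flagging is that in the complex case $\Re[F_f]$ is merely $\SR$-linear, so ``Fr\'echet differentiability'' has to be interpreted in the real sense, i.e.\ as the differentiability of the real-valued map $\|\cdot\|$ regarded as a function on the underlying real Banach space.
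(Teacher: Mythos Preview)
Your proof is correct and follows essentially the same approach as the paper: the lower bound comes directly from the norming property of $F_f$, and the upper bound is obtained by combining the lower bound at $-t$ with the definition of $\rho(t)$. Your additional remarks on the Fr\'echet derivative and the complex case are valid elaborations that the paper leaves implicit.
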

\begin{proof}
The left inequality in \eqref{dt} follows easily from
\Be
\|f+tg\|\geq |F_f(f+tg)|\geq \Re[F_f(f+tg)]=\|f\|+t\,\Re[F_f(g)],\quad t\in\SR.
\label{auf0}
\Ee 
Define
\[
\sigma(t,f,g):=\|f+tg\|+\|f-tg\|-2\|f\|\geq0, \quad \forall\;t\in\SR.
\]
Then, using \eqref{auf0} with $t$ replaced by $-t$ we have
\[
\|f+tg\|=2\|f\|-\|f-tg\| + \sigma(t,f,g)\leq 2\|f\|-\|f\|+t\Re[F_f(g)]+ \sigma(t,f,g),
\]
and therefore
\[
\|f+tg\|-\|f\|-t\Re[F_f(g)]\leq \sigma(t,f,g)\leq 2\rho(t), \quad t\in\SR.\]
This establishes the upper bound in \eqref{dt}.
\end{proof}

By homogeneity one deduces
\begin{corollary}\label{C_dt}
For every (non-null) $f,g\in\SX$ with $\|g\|=1$  it holds
\Be
0\leq \|f+tg\|-\|f\|-t\Re[F_f(g)]\leq 2\|f\|\rho(t/\|f\|),\quad t\in\SR.
\label{dtx}
\Ee
\end{corollary}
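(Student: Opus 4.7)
The plan is to derive Corollary \ref{C_dt} from Proposition \ref{P_dt} by a pure scaling argument, so the work is essentially bookkeeping. The statement of Proposition \ref{P_dt} requires both vectors to have norm one, while here only $g$ is a unit vector, so the only task is to normalize $f$ and then push the constant $\|f\|$ through the resulting inequality.

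First I would observe that the norming functional is invariant under positive scaling. Given $f\neq 0$, set $\tilde f = f/\|f\|$, which is a unit vector. The functional $F_f\in\SX^*$ satisfies $\|F_f\|_{\SX^*}=1$ and
\[
F_f(\tilde f) = F_f(f)/\|f\| = 1 = \|\tilde f\|,
\]
so $F_f$ is also a norming functional for $\tilde f$. In the uniformly smooth case this determines $F_{\tilde f}$ uniquely, and we may take $F_{\tilde f}=F_f$; more generally it suffices to make this consistent choice at the two scales $f$ and $\tilde f$.

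Then I would apply Proposition \ref{P_dt} to the pair of unit vectors $(\tilde f, g)$, with parameter $s\in\SR$:
\[
0\leq \|\tilde f+sg\|-\|\tilde f\|-s\Re[F_{\tilde f}(g)]\leq 2\rho(s).
\]
Setting $s=t/\|f\|$ and multiplying the entire chain of inequalities by $\|f\|>0$, one has $\|f\|\,\|\tilde f+sg\|=\|f+tg\|$, $\|f\|\,\|\tilde f\|=\|f\|$, $\|f\|s=t$, and $F_{\tilde f}=F_f$, which gives precisely
\[
0\leq \|f+tg\|-\|f\|-t\Re[F_f(g)]\leq 2\|f\|\,\rho(t/\|f\|).
\]

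There is no real obstacle here: the only point requiring a line of justification is the identity $F_{\tilde f}=F_f$ under positive rescaling, and the rest is the homogeneity of the norm and of the scalar $t$. The proof will therefore be essentially a one-liner once the scaling conventions are laid down.
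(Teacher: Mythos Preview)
Your proposal is correct and is precisely the homogeneity argument the paper has in mind; the paper's own proof is simply the phrase ``By homogeneity one deduces,'' and you have spelled out that one-line scaling exactly as intended. Your extra remark that $F_{\tilde f}=F_f$ under positive rescaling is the only nontrivial point, and you have justified it correctly.
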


An interesting application of \eqref{dtx} gives the following result.

\begin{proposition}\label{P_dist}
Suppose that $2\rho(t)\leq \ga \, t^q$, for some $\ga>0$ and $q>1$. Then, there exists $c_1=c_1(\ga,q)>0$ such that
\Be
\dist(f,[\phi])\leq \|f\|\,\big(1-c_1|F_f(\phi)|^{q'}\big),
\label{distFx}
\Ee
for all $f,\phi\in\SX\setminus\{0\}$ with $\|\phi\|=1$.
\end{proposition}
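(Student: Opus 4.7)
The plan is to bound $\dist(f,[\phi])$ by a single vector $\lambda\phi\in[\phi]$, chosen by minimizing the estimate given in Corollary \ref{C_dt}. First I would align the phase of $\phi$ with the norming functional: writing $F_f(\phi)=e^{-i\theta}|F_f(\phi)|$ (trivially in the real case), set $g=e^{i\theta}\phi$, so that $\|g\|=1$ and $F_f(g)=|F_f(\phi)|\geq 0$. Replacing $t$ by $-t$ in \eqref{dtx} and using the hypothesis $2\rho(u)\leq\gamma u^q$, one obtains, for every $t>0$,
\[
\|f-tg\|\,\leq\,\|f\|\,-\,t\,|F_f(\phi)|\,+\,\gamma\,t^q\,\|f\|^{1-q}.
\]
Since $f-tg=f-(te^{i\theta})\phi\in f-[\phi]$, this right-hand side is an upper bound for $\dist(f,[\phi])$.

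The second step is to minimize the right-hand side over $t>0$. This is a routine one-variable calculus exercise for $\Phi(t)=-at+bt^q$ with $a=|F_f(\phi)|$ and $b=\gamma\|f\|^{1-q}$. The critical point is $t_*=(a/(bq))^{1/(q-1)}$, and a short algebraic manipulation using $q/(q-1)=q'$ together with the identity $(1-q)\cdot(-1/(q-1))=1$ gives
\[
\Phi(t_*)\,=\,-\,\frac{1}{q'}\,q^{-\frac{1}{q-1}}\,\gamma^{-\frac{1}{q-1}}\,|F_f(\phi)|^{q'}\,\|f\|.
\]
Substituting back yields the claimed inequality with explicit constant $c_1(\gamma,q)=\bigl(q'\,q^{1/(q-1)}\,\gamma^{1/(q-1)}\bigr)^{-1}$.

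There is no real obstacle in this argument: the minimization is elementary and the only small trick is the unimodular rotation of $\phi$, which handles the real and complex scalar cases uniformly. The degenerate case $F_f(\phi)=0$ collapses to the trivial bound $\dist(f,[\phi])\leq\|f\|$, which is recovered by taking $t=0$ in the estimate above; consistency of the non-degenerate bound (positivity of the right-hand side) is automatic, since the left-hand side is nonnegative and the estimate is proved for arbitrary admissible $\gamma$.
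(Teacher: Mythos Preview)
Your proof is correct and follows essentially the same approach as the paper: both rotate $\phi$ by a unimodular scalar so that $F_f$ becomes nonnegative, apply Corollary~\ref{C_dt} together with $2\rho(u)\leq\gamma u^q$ to get an upper bound of the form $\|f\|-at+bt^q$, and then minimize over $t>0$ by elementary calculus. Your explicit constant $c_1=\bigl(q'(\gamma q)^{1/(q-1)}\bigr)^{-1}$ coincides with the paper's $c_1=\frac{1}{q'(\gamma q)^{q'-1}}$, since $1/(q-1)=q'-1$.
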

\begin{proof}
Let $\nu={\overline{\sgn}}{F_f(\phi)}$ and $\la>0$. Then Corollary \ref{C_dt} implies that
\Bea
\|f-\la\nu\phi\| & \leq &  \|f\|-\la\Re[F_f(\nu\phi)] +\ga\|f\|(\la/\|f\|)^q\nonumber \\
&  = & \|f\|\Big(1-\tfrac{\la}{\|f\|}|F_f(\phi)| +\ga(\tfrac{\la}{\|f\|})^q\Big).
\label{aux2b}
\Eea
Now, it is easily checked that the 1-dimensional function $h(\la)=\ga\la^q-\la F$ 
reaches a minimum at $\la_1=[F/(\ga q)]^{q'-1}$, and that
\[
\min_{\la>0}h(\la)=-c_1F^{q'},\quad \mbox{with}\quad c_1=\frac1{q'(\ga q)^{q'-1}}.
\] 
So, minimizing over $\la>0$ in \eqref{aux2b} one obtains \eqref{distFx}.
\end{proof}

Thus, recalling Definition \ref{D_Ds}, we recover the following assertion from \S1.

\begin{corollary}
Suppose that $\rho(t)\leq \ga \, t^q$, for some $\ga>0$ and $q>1$.
Then, for every dictionary $\cD$, the pair $(\SX,\cD)$ has the property $\D(s, c_1)$ with $s=q'$ and some $c_1=c_1(\ga,q)>0$.
\end{corollary}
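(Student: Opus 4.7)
The plan is to observe that the Corollary follows almost immediately from Proposition \ref{P_dist}, which has just been established. The only two things that need to be reconciled are the normalization of $\ga$ (the statement here is $\rho(t)\le \ga t^q$, whereas Proposition \ref{P_dist} is stated under the hypothesis $2\rho(t)\le \ga t^q$) and the restriction on $\phi$ (Proposition \ref{P_dist} requires $\|\phi\|=1$, whereas property $\D(s,c_1)$ quantifies over $\phi\in\cD$).

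First, I would note that by the very definition of a dictionary given in Section 1 (``a subset of unit vectors with dense span''), every $\phi\in\cD$ satisfies $\|\phi\|=1$. Hence the range of $\phi$ in property $\D$ is a subset of the range of $\phi$ in Proposition \ref{P_dist}, and no additional work is needed to pass to dictionary elements.

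Next, I would absorb the factor of $2$: the hypothesis $\rho(t)\le \ga t^q$ yields $2\rho(t)\le (2\ga)\, t^q$, so Proposition \ref{P_dist} applies with $2\ga$ in place of $\ga$. This produces a constant
\[
c_1=\frac{1}{q'\bigl((2\ga) q\bigr)^{q'-1}}>0
\]
depending only on $\ga$ and $q$. Substituting into the conclusion of Proposition \ref{P_dist} gives exactly the inequality
\[
\dist(f,[\phi])\le \|f\|\bigl(1-c_1|F_f(\phi)|^{q'}\bigr)
\]
for all $f\in\SX\setminus\{0\}$ and all $\phi\in\cD$, which is property $\D(q',c_1)$.

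There is no real obstacle here; the Corollary is essentially a rewording of Proposition \ref{P_dist} once one recalls the normalization convention for dictionaries and matches notation by setting $s=q'$. The content of the statement lies in Proposition \ref{P_dist} itself, which has been proved by a one-dimensional minimization in $\la$ of the upper bound supplied by Corollary \ref{C_dt}; the present Corollary simply records the consequence in the language of Definition \ref{D_Ds}.
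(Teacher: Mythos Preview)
Your proposal is correct and matches the paper's approach: the corollary is stated without proof in the paper, being an immediate consequence of Proposition~\ref{P_dist} together with the normalization convention $\|\phi\|=1$ for dictionary elements. Your handling of the factor of $2$ and the explicit tracking of the resulting constant $c_1$ are fine and slightly more detailed than what the paper records.
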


\BR
Observe that $\rho(t)\leq \ga \, t^q$ can only hold for $q\leq 2$, and hence $q'\geq2$. However, property $\D(s,c_1)$ may hold in some cases with $s$ close to 1.
For instance, if $\SX=\ell^p$ with $p>2$ and $\cD$ is the canonical basis, then it is not hard to verify that $(\SX,\cD)$ has property $\D(s,c_1)$ with $s=p'$
and $c_1=1/p$; see Proposition \ref{P_distlpq} below for a more general class of such examples.
\ER

\BR
Observe that the property $\D(s,c_1)$ can also be written as
\[
\Big|F_f(\phi)\Big|\leq c_{1}^{-1}\,\Big(1-\dist(f,[\phi])\Big)^{1/s},
\quad\forall\;\|f\|=1,\;\;\phi\in\cD.
\]
One may ask whether this property could hold 
for some non-uniformly smooth Banach space $(\SX,\|\cdot\|)$ and some dictionary $\cD$.
\ER

\

Below we shall also use the following known lemma.

\begin{lemma}\label{L_Fz}
Assume that the norm $\|\cdot\|$  is G\^ateaux differentiable in $\SX$. Consider a vector $f\in\SX$, a finite dimensional subspace $\cY\subset\SX$,
and an element $g\in\cY$ such that $\|f-g\|=\dist(f,\cY)>0$. Then
\Be
F_{f-g}(h)=0, \quad \forall\;h\in \cY. 
\label{Fz}
\Ee
\end{lemma}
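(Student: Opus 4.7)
The idea is the standard variational characterization of best approximation in smooth Banach spaces: since $g$ minimizes the distance from $f$ to $\cY$, any one-parameter perturbation $g+\la h$ with $h\in\cY$ gives a real function $\la\mapsto \|f-g-\la h\|$ with a minimum at $\la=0$, and the G\^ateaux derivative of the norm must vanish there. Proposition \ref{P_dt} (and its homogeneous version Corollary \ref{C_dt}) already tells us that the G\^ateaux derivative of $\|\cdot\|$ at a nonzero vector $z$ in the direction $h$ is exactly $\Re[F_z(h)]$, which identifies the vanishing derivative with the desired orthogonality condition.

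Concretely, I would fix an arbitrary $h\in\cY$ and set $z:=f-g\neq 0$. Applying Corollary \ref{C_dt} with $f$ replaced by $z$ (and after normalizing $h$) yields
\[
\|z-\la h\|\,\geq\, \|z\|-\la\,\Re[F_z(h)] - 2\|z\|\,\rho(|\la|/\|z\|),\quad \la\in\SR,
\]
where $\rho(t)/t\to 0$ as $t\to 0$ by uniform smoothness (equivalently, by G\^ateaux differentiability, the $o(\la)$-term takes the place of the $\rho$ bound). Since $z-\la h = f - (g+\la h)$ with $g+\la h\in\cY$, the best-approximation property of $g$ gives $\|z-\la h\|\geq \|z\|$. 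Dividing by $\la>0$ and letting $\la\to 0^+$ yields $\Re[F_z(h)]\leq 0$; repeating with $\la\to 0^-$ (or replacing $h$ by $-h$) gives the reverse inequality, hence $\Re[F_z(h)]=0$.

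For the complex case, I would apply the identity just obtained with $h$ replaced by $ih\in\cY$; this produces $\Re[F_z(ih)]=0$, i.e.\ $\Im[F_z(h)]=0$, and combining the two equalities gives $F_{f-g}(h)=0$. Since $h\in\cY$ was arbitrary, \eqref{Fz} follows.

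The step requiring care is the passage from the quantitative inequality of Corollary \ref{C_dt} to the one-sided derivative bound; strictly speaking, G\^ateaux differentiability alone (without the power-type modulus) suffices and would be the cleaner hypothesis to invoke. The only other subtlety is the complex-linear extension, handled by the $h\mapsto ih$ trick above.
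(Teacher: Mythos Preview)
Your approach is essentially the same as the paper's: combine the best-approximation lower bound $\|z-\la h\|\geq\|z\|$ with the first-order upper bound on $\|z-\la h\|$ coming from G\^ateaux differentiability, then let $\la\to 0$. There is a sign slip in your displayed inequality: as written (with $\geq$ and the term $-2\|z\|\rho$) it is true but useless, since two lower bounds on $\|z-\la h\|$ cannot be combined; what you need (and what your subsequent verbal argument uses) is the \emph{upper} bound
\[
\|z-\la h\|\,\leq\, \|z\|-\la\,\Re[F_z(h)] + o(\la),
\]
which is exactly the right-hand inequality in Corollary~\ref{C_dt}. With that correction your deduction of $\Re[F_z(h)]=0$ is fine.

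The only stylistic difference from the paper is in handling the complex case. You split into real and imaginary parts via the substitution $h\mapsto ih$; the paper instead multiplies $h$ by $\nu=\overline{\operatorname{sign}}\,F_{f-g}(h)$ so that $\Re[F_{f-g}(\nu h)]=|F_{f-g}(h)|$, obtaining $|F_{f-g}(h)|\leq o(\la)/\la$ in a single stroke. Both arrive at the same conclusion; the paper's trick is slightly more compact but your two-step version is equally valid.
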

\begin{proof}
Let $h\in\cY$ and let $\nu={\overline{\sgn}}{F_{f-g}(h)}$. Then, for all $\la>0$ we have
\Beas
\dist(f,\cY) & \leq &  \|f-g-\la \nu h\|\leq \|f-g\|-\la \Re[F_{f-g}(\nu h)] + o(\la)\\
 & = & \dist(f,\cY) -\la |F_{f-g}(h)| + o(\la).
\Eeas
Thus, $|F_{f-g}(h)|\leq o(\la)/\la$, which letting $\la\searrow0$ implies that $F_{f-g}(h)=0$.
\end{proof}

\BR
As a consequence of Lemma \ref{L_Fz}, the vectors 
$\phi_{i_1},\phi_{i_2}, ...$ chosen by the WCGA are always linearly independent.
\ER

\subsection{Properties $\Au$ and $\At$ for bases}
\label{S_A2A3}

Suppose that $\cD=\Psi=\{\psi_n\}_{n=1}^\infty$ is a (normalized) Schauder basis in $\SX$. 
We denote its dual system by $\Psi^*=\{\psi^*_n\}_{n\geq1}$.
We briefly discuss the meaning of properties $\Au$ and $\At$ in this case, and relate 
them with more familiar concepts from the theory of thresholding greedy algorithms.

In the first result we use the standard notation 
\[
S_A(f):=\sum_{n\in A}\psi^*_n(f)\psi_n, \quad f\in \SX,
\]
for each finite set $A\subset\SN$. The next lemma is immediate from the definitions.

\begin{lemma}
Let $\Psi$ be a (normalized) Schauder basis in $\SX$. Then, for each $N\geq1$,  $\Sigma_N$ satisfies the property $\Au$ with parameter
\Be
U=k_N:=\sup_{|A|\leq N}\|S_A\|.
\label{kN}
\Ee
In particular, $(\SX,\Psi)$ has the property $\Au(U)$ if and only if $\Psi$ is an unconditional basis, with suppression unconditionality constant $U$.
\end{lemma}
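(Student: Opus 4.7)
The plan is to derive both assertions directly from the Schauder-basis biorthogonality, since property $\Au$ for $\Sigma_N$ is essentially a reformulation of a uniform bound on the projections $S_A$ with $|A|\leq N$. First I would fix a finite set $B\subset\SN$, scalars $\{a_i\}_{i\in B}\subset\SK$, and a subset $A\subset B$ with $|A|\leq N$, and set $g:=\sum_{i\in B}a_i\phi_i$. Since $\Psi$ is a Schauder basis, the dual functionals satisfy $\psi_j^*(\phi_i)=\dt_{ij}$, so $\psi_i^*(g)=a_i$ for every $i\in B$, and in particular
\[
S_A(g)\,=\,\sum_{i\in A}\psi_i^*(g)\,\phi_i\,=\,\sum_{i\in A}a_i\phi_i.
\]
Taking norms and using the definition of $k_N$,
\[
\Big\|\sum_{i\in A}a_i\phi_i\Big\|\,=\,\|S_A(g)\|\,\leq\,\|S_A\|\cdot\|g\|\,\leq\,k_N\,\Big\|\sum_{i\in B}a_i\phi_i\Big\|,
\]
which is exactly the inequality \eqref{A2} with $U=k_N$, establishing that $\Sigma_N$ has property $\Au(k_N)$.

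For the ``in particular'' clause, I would note that $(\SX,\Psi)$ has property $\Au(U)$ precisely when the above inequality holds for \emph{all} finite $A\subset B$ with no cardinality restriction. One direction follows from the computation above, now with $U=\sup_A\|S_A\|$, which is finite iff $\Psi$ is (suppression) unconditional, and equals the suppression unconditionality constant by definition. Conversely, applying $\Au(U)$ to an arbitrary $g=\sum_{i\in B}a_i\phi_i$ with finite support $B$ and arbitrary $A\subset B$ gives $\|S_A(g)\|\leq U\|g\|$, and since vectors with finite support are dense in $[\Psi]=\SX$, we obtain $\|S_A\|\leq U$ uniformly in $A$, i.e.\ $\Psi$ is suppression unconditional with constant at most $U$. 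There is essentially no obstacle here; the only point worth verifying carefully is that the supremum of $\|S_A\|$ over finite $A$ equals the suppression unconditionality constant of $\Psi$, which is standard.
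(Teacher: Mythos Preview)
Your proof is correct and is exactly the natural argument; the paper itself does not give a proof, stating only that the lemma ``is immediate from the definitions.'' Your write-up simply unpacks this, so there is nothing to compare.
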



The second result relates property $\At$ with the right-democracy function of the dual basis $\Psi^*$.
We use the convenient notation
\[
\bbone^*_{\bfe A}:=\sum_{n\in A} \e_n\psi^*_n,
\]
when $A\subset\SN$ is a finite set, and $\bfe=(\e_n)_{n=1}^\infty\subset\SK$ is such that $|\e_n|=1$ for all $n$.

\begin{lemma} \label{L_A3}
Let $\Psi$ be a (normalized) Schauder basis in $\SX$. Then $(\SX,\Psi)$ satisfies property {\rm $\At(r, V)$}
if and only if
\Be
\big\|\bbone^*_{\bfe A}\big\|_{\SX^*}\leq V\,|A|^r 
\label{1A3}
\Ee
for all finite $A\subset\SN$, and all $\bfe=(\e_n)_{n=1}^\infty\subset\SK$  with $|\e_n|=1$. 
\end{lemma}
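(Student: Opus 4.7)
The plan is a direct duality argument, splitting into the two implications.

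For the forward direction, I would assume property $\At(r,V)$ holds and take an arbitrary $f\in\SX$ with $\|f\|\leq 1$ and an arbitrary finite set $A$. Since $\Psi$ is a Schauder basis, the partial sums $S_M f=\sum_{i=1}^M \psi^*_i(f)\psi_i$ converge to $f$ in norm, and $\|S_M f\|\to\|f\|$. Pick $M$ large enough that $A\subset B:=\{1,\dots,M\}$, apply $\At(r,V)$ with coefficients $a_i=\psi^*_i(f)$ for $i\in B$ to get
\[
\sum_{i\in A}|\psi^*_i(f)|\,\leq\,V|A|^r\,\|S_M f\|,
\]
and let $M\to\infty$ to obtain $\sum_{i\in A}|\psi^*_i(f)|\leq V|A|^r\|f\|$. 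Then for any unit sign sequence $\bfe$,
\[
\big|\bbone^*_{\bfe A}(f)\big|=\Big|\sum_{n\in A}\e_n\psi^*_n(f)\Big|\leq\sum_{n\in A}|\psi^*_n(f)|\leq V|A|^r\|f\|,
\]
and taking the supremum over $\|f\|\leq1$ gives \eqref{1A3}.

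For the reverse direction, I would assume \eqref{1A3} and prove $\At(r,V)$ by a sign-optimization trick. Given finite $A\subset B$ and scalars $(a_i)_{i\in B}$, set $f=\sum_{i\in B}a_i\psi_i$ and choose $\e_n=\overline{\sgn}(a_n)$ for $n\in A$ (arbitrary unit scalars for $n\notin A$), so that $\e_n a_n=|a_n|$. Biorthogonality yields
\[
\bbone^*_{\bfe A}(f)=\sum_{n\in A}\e_n a_n=\sum_{n\in A}|a_n|,
\]
and the assumed bound on $\|\bbone^*_{\bfe A}\|_{\SX^*}$ combined with $|\bbone^*_{\bfe A}(f)|\leq \|\bbone^*_{\bfe A}\|_{\SX^*}\|f\|$ gives exactly
\[
\sum_{n\in A}|a_n|\,\leq\,V|A|^r\,\Big\|\sum_{i\in B}a_i\psi_i\Big\|,
\]
which is $\At(r,V)$.

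There is no serious obstacle here: both implications are one-line consequences of the biorthogonality $\psi^*_n(\psi_m)=\delta_{nm}$ together with the standard duality $|\phi(f)|\leq\|\phi\|_{\SX^*}\|f\|$. The only mild care needed is in the forward direction, where one must pass from finite linear combinations to a general $f\in\SX$ via the Schauder basis convergence; this is why the partial sums $S_M f$ appear. The reverse direction does not need any approximation because one is directly working with a finite expansion $f=\sum_{i\in B}a_i\psi_i$.
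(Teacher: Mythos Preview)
Your proof is correct and follows essentially the same duality argument as the paper: both directions hinge on biorthogonality and the choice $\e_n=\overline{\sgn}\,a_n$. The only cosmetic difference is that in the implication $\At(r,V)\Rightarrow\eqref{1A3}$ the paper tests against $f$ with a \emph{finite} expansion and implicitly invokes density, whereas you make the passage explicit by applying $\At$ to the partial sums $S_Mf$ and letting $M\to\infty$; this is the same density step, just unpacked.
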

\begin{proof}
Assume first \eqref{1A3}, and take sets $A\subset B$ and scalars $a_n\in\SK$. If $\e_n=\lsgn a_n$ then
\[
\sum_{n\in A}|a_n|=\bbone^*_{\bfe A}\big(\Ts\sum_{n\in B} a_n\psi_n\big)\leq \|\bbone^*_{\bfe A}\|_{\SX^*}\,\big\|\sum_{n\in B} a_n\psi_n\big\|\leq V|A|^r\big\|\sum_{n\in B} a_n\psi_n\big\|.
\]
Conversely, assume property $\At(r,V)$, and pick $A$ and $\bfe$. If $f\in\SX$ has a finite expansion with respect to $\Psi$ we have
\[
|\bbone_{\bfe A}^*(f)|\leq \sum_{n\in A}|\psi^*_n(f)|\leq V\,|A|^r\,\big\|\sum_{n=1}^\infty\psi^*_n(f)\psi_n\big\|=V\,|A|^r\,\|f\|.
\]
Therefore, $\|\bbone^*_{\bfe A}\|_{\SX^*}\leq V|A|^r$, and \eqref{1A3} holds.
\end{proof}

\section{Proof of Theorem \ref{th_Ds}}
\setcounter{equation}{0}\setcounter{footnote}{0}
\setcounter{figure}{0}

We use the definition of WCGA given in \S1. 
We shall follow closely the arguments given in \cite[\S3]{Tem14}; see also  \cite[\S 8.7]{Tem18}.
As in that paper, the proof is split into two main steps. 

\subsection{The iteration inequality}\label{S_step1}

In the first step we prove the following variant of 
\cite[Theorem 2.3]{Tem14}; see also \cite[Theorem 8.7.12]{Tem18}.
The proof makes use of the property $\D(s,c_1)$, and has been rewritten to simplify some steps from \cite{Tem14}.

\begin{theorem}
Let $K\geq 1$ and assume that 
\Benu
\item[(i)] $(\SX,\cD)$ has property $\D(s,c_1)$, for some $s>1$
\item[(ii)] $\Sigma_K$ has property {\rm $\At(r,V)$}, for some $r\leq 1$. 
\Eenu
Then, for all $f\in\SX$, $\Phi\in\Sigma_K$ and all $m, M\geq0$ 
it holds
\Be
\|f_{m+M}\|\leq e^{ -\frac{c_2\, M}{K^{rs}}}\,\|f_m\|\,+\,2\,\|\Phi-f\|,
\label{fmM}
\Ee
where $c_2=\, c(s)\,c_1\,\tau^{s}\,V^{-s}$, for some $c(s)>0$.
\end{theorem}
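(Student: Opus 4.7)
The plan is to adapt the two-step WCGA iteration strategy, using property $\D(s,c_1)$ in place of the usual power-type modulus of smoothness, and a refined use of property $\At(r,V)$ that eliminates any dependence of the one-step contraction on $\|f\|$ or $\|\Phi\|$.

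I first derive the per-step estimate. Since $\G_{n+1}(f)$ is the Chebyshev approximant from $[\phi_{i_1},\dots,\phin]$, and in particular does no worse than $\G_n(f)+t\phin$ for any scalar $t$, we have $\|f_{n+1}\|\le\dist(f_n,[\phin])$. Property $\D(s,c_1)$ applied to $(f_n,\phin)$, together with the weakness condition $|F_{f_n}(\phin)|\ge\tau A_n$ where $A_n:=\sup_{\phi\in\cD}|F_{f_n}(\phi)|$, then gives
\[
\|f_{n+1}\|\le\|f_n\|\bigl(1-c_1\tau^{s}A_n^{s}\bigr).
\]
The crucial step is to prove that $A_n\ge(3VK^{r})^{-1}$ whenever $\|f_n\|>2\|\Phi-f\|$. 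Write $\Phi=\sum_{i\in B}b_i\phi_i$ with $|B|\le K$. Lemma \ref{L_Fz} gives $F_{f_n}(\phi_{i_j})=0$ for $j\le n$, hence $F_{f_n}(f)=\|f_n\|$, and only indices $i\in B':=B\setminus\{i_1,\dots,i_n\}$ contribute to $F_{f_n}(\Phi)$. Therefore
\[
\|f_n\|-\|\Phi-f\|\;\le\;|F_{f_n}(\Phi)|\;\le\;\Bigl(\sum_{i\in B'}|b_i|\Bigr)\,A_n.
\]
The key observation is an enlargement trick: applying property $\At(r,V)$ with $A=B'$ (admissible since $|B'|\le K$) and the enlarged container $B'\cup\{i_1,\dots,i_n\}$, with coefficients on the added indices $i_1,\dots,i_n$ free, and then passing to the infimum, yields
\[
\sum_{i\in B'}|b_i|\;\le\;V|B'|^{r}\,\dist\bigl(\Phi,[\phi_{i_1},\dots,\phi_{i_n}]\bigr)\;\le\;VK^{r}\bigl(\|\Phi-f\|+\|f_n\|\bigr),
\]
where the last step uses $|B'|\le K$, that $\G_n(f)\in[\phi_{i_1},\dots,\phi_{i_n}]$, and the triangle inequality. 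Under the hypothesis $\|f_n\|>2\|\Phi-f\|$ the right-hand side is strictly below $\tfrac32 VK^{r}\|f_n\|$ while $|F_{f_n}(\Phi)|\ge\tfrac12\|f_n\|$, whence $A_n>(3VK^{r})^{-1}$.

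Combining the two estimates gives $\|f_{n+1}\|\le\|f_n\|(1-c_2/K^{rs})$ with $c_2=c_1\tau^{s}/(3V)^{s}$. I then split into cases. If $\|f_{n_0}\|\le 2\|\Phi-f\|$ for some $n_0\in[m,m+M]$, then the monotonicity $\|f_{n+1}\|\le\|f_n\|$ (valid because $\G_n(f)$ is admissible at step $n+1$) forces $\|f_{m+M}\|\le 2\|\Phi-f\|$. Otherwise the per-step contraction applies throughout $n=m,\dots,m+M-1$ and iterates, via $1-x\le e^{-x}$, to $\|f_{m+M}\|\le e^{-c_2 M/K^{rs}}\|f_m\|$. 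Either way \eqref{fmM} follows. I expect the enlargement trick in the lower bound on $A_n$ to be the main technical point: it is precisely what frees the contraction rate from any dependence on $\|\Phi\|$ or $\|f\|$, producing the genuine geometric decay in $M$.
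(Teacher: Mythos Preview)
Your proof is correct and follows the same route as the paper: apply $\D(s,c_1)$ for the one-step bound, control $\sum_{i\in B'}|b_i|$ via $\At$ applied to the enlarged combination $\Phi-\G_n(f)$ (your ``enlargement trick'' is exactly the paper's step $\sum_{A\cap T_n}|a_i|\le V|A|^r\|\Phi-\G_n f\|$), and iterate. The only difference is organizational: instead of your dichotomy, the paper linearizes via $(1-u)_+^s\ge\dt_s(1-2u)$ so as to simultaneously establish the more general bound $\|f_{m+M}\|\le e^{-c_2M/|A|^{rs}}\|f_m\|+2(\|\Phi-f\|+\|\Phi_B\|)$ for arbitrary $A\subset T_k$ and $B=T_k\setminus A$, which is needed later in the proof of Theorem~\ref{th_Ds}.
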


\begin{proof} 
The case $f_{m+M}=0$ is trivial, so we assume $\|f_{m+M}\|>0$.
If $n\in[m, m+M)$ then also $\|f_n\|>0$.  From the definition of the algorithm and property $\D(s,c_1)$, 
\Be
\|f_{n+1}\|\leq \dist(f_n, [\phin])\,\leq \,\|f_n\|\,\Big(1-c_1|F_{f_n}(\phin)|^{s}\Big).
\label{fnF}
\Ee
When $\Phi=\sum_T a_i\phi_i\in \Sigma_K$, we use the notation  
\[
\Phi_A:=\sum_{A} a_i\phi_i, \quad\mbox{for each}\quad  A\subset T.
\]
We also denote
\Be
\Ga_n:=\supp\G_n(f)=\{i_1,\ldots, i_n\},\mand T_n=T\setminus \Ga_n.
\label{GanTn}
\Ee
By Lemma \ref{L_Fz} and the definition of the algorithm we have
\Be
\tau|F_{f_n}(\Phi_A)|=\tau|F_{f_n}(\Phi_{A\cap T_n})|\leq \sum_{A\cap T_n}|a_i|\,|F_{f_n}(\phin)|.
\label{auf1}
\Ee
By property $\At$,
\[
\sum_{A\cap T_n}|a_i|\leq V\,|A|^r\,\|\Phi-\G_n f\| \leq V\,|A|^r\,\big(\|\Phi-f\|+\|f_n\|\big).
\]
Therefore, inserting these estimates into \eqref{fnF} we obtain
\Be
\|f_{n+1}\|\,\leq \,\|f_n\|\,\Big[1-c_1\Big(\frac{\tau\, |F_{f_n}(\Phi_A)|}{V\,|A|^r\,(\|\Phi-f\|+\|f_n\|)
}\Big)^{s}\Big],
\label{auf2}
\Ee
for all (non-empty) sets $A\subset T$.
One wants to pick the best possible set $A$ in \eqref{auf2}.
For later estimates we shall allow sets $A\subset T_{k}$, for a fixed $k\in [0,m]$, and define
\[
B=T_{k}\setminus A.
\] 
In this theorem it will suffice to consider $k=0$ and $A=T$ (so $B=\emptyset$), but we keep on with the general case.
Since $\Ga_{k}\subset\Ga_n$,  using Lemma \ref{L_Fz}
we have
\Beas
|F_{f_n}(\Phi_A)| & = & |F_{f_n}(\Phi_A+\Phi_{\Ga_{k}\cap A}-\G_nf)|\,=\, |F_{f_n}(\Phi-\Phi_{B}-f+f_n)|\\
& \geq &  \|f_n\|-\|\Phi-f\|-\|\Phi_B\|.
\Eeas

Inserting the above inequalities into \eqref{auf2}, and denoting $c_3=c_1(\tau/(2V))^{s}$, we obtain
\Beas
\|f_{n+1}\| & \leq & \|f_n\|\,\Big(1-c_3\,\frac{[\|f_n\|-\|\Phi-f\|-\|\Phi_B\|]^{s}_+}{[2^{-1}|A|^r\,(\|\Phi-f\|+\|f_n\|)]^{s}}\Big)\\
& \leq & \|f_n\|\,\Big(1-c_3\frac{[\|f_n\|-\|\Phi-f\|-\|\Phi_B\|]^{s}_+}{[|A|^r\,\|f_n\|]^{s}}\Big),
\Eeas
where the last inequality is trivial when $\|f_n\|\leq \|\Phi-f\|+\|\Phi_B\|$, and in the complement case follows from $\|f_n\|\geq \|\Phi-f\|$. We now use the elementary inequality
\[
(1-u)_+^{s}\geq \dt_s(1-2u),
\quad u>0,
\]
which holds for some $\dt_s>0$. Applying this with $u=(\|\Phi-f\|+\|\Phi_B\|)/\|f_n\|$, and letting $c_2=c_3\dt_s$, we obtain
\Be
\|f_{n+1}\|\,\leq\, \|f_n\|\,\Big(1-\frac{c_2}{|A|^{rs}}\Big) \,+ \,\frac{2\,c_2}{|A|^{rs}}\,(\|\Phi-f\|+\|\Phi_B\|).
\label{fn1}
\Ee
This expression can be rewritten as
\[
\|f_{n+1}\|-2(\|\Phi-f\|+\|\Phi_B\|)\,\leq\, \Big(\|f_n\|-2\,(\|\Phi-f\|+\|\Phi_B\|)\Big)\,\Big(1-\frac{c_2}{|A|^{rs}}\Big).
\]
Denoting $\beta=c_2/|A|^{rs}$,  one also has 
\[
\Big(\|f_{n+1}\|-2(\|\Phi-f\|+\|\Phi_B\|)\Big)_+\,\leq\, (1-\beta)\,\Big(\|f_n\|-2\,(\|\Phi-f\|+\|\Phi_B\|)\Big)_+\,,
\]
after checking the trivial negative cases. One can now iterate this inequality for $m\leq n<m+M$ to obtain
\Beas
\|f_{m+M}\|-2(\|\Phi-f\|+\|\Phi_B\|)& \leq &  (1-\beta)^M\,\Big(\|f_m\|-2\,(\|\Phi-f\|+\|\Phi_B\|)\Big)_+
\\
& \leq & (1-\beta)^M\,\|f_m\|,
\Eeas
and therefore
\Be
\|f_{m+M}\|\leq (1-\beta)^M\,\|f_m\|\,+\,2(\|\Phi-f\|+\|\Phi_B\|).
\label{AB}
\Ee
Since $1-\beta\leq e^{-\beta}$ and $\beta\geq c_2/K^{rs}$, letting $B=\emptyset$ in \eqref{AB} gives \eqref{fmM}. 
\end{proof}

\BR
The above proof actually shows the validity of the following more general inequality 
\Be
\|f_{m+M}\|\leq e^{-{c_2M}/{|A|^{rs}}}\,\|f_m\|\,+\,2(\|\Phi-f\|+\|\Phi_B\|),
\label{AB1}
\Ee
for any sets $A\subset T_{k}$ and $B=T_{k}\setminus A$, and any $0\leq k\leq m$. 
This is the analog of the corresponding expression in \cite[(3.7)]{Tem14}; see also \cite[(8.7.22)]{Tem18}.
Observe that \eqref{AB1} also  makes sense for $A=\emptyset$, in which case the inequality 
follows trivially from
\[
\|f_{k}\|\leq \|f-\Phi_{\Ga_{k}}\|=\|f-\Phi+\Phi_B\|\leq \|\Phi-f\|+\|\Phi_B\|.
\]
\ER

\subsection{Proof of Theorem \ref{th_Ds}}\label{S_step2}

The second part of the proof of Theorem \ref{th_Ds} is then carried exactly as in the paper \cite{Tem14}; see also \cite[pp. 433-440]{Tem18}.
It is based on appropriate choices of sets $A_j$ and $B_j$ in inequality \eqref{AB1}, at various inductive steps.
We only remark that the quantities $|A_j|^{rq'}$ that appear in \cite{Tem14} can all be replaced by $|A_j|^{rs}$, according to 
inequality \eqref{AB1}, as it is only via this inequality that the uniform smoothness of the norm (or the property $\D(s,c_1)$) is used in this proof. All the arguments in \cite{Tem14} can then be applied verbatim, so we do not write down the details.

\

\ProofEnd

\section{The WCGA in the spaces $\ell^p(\ell^q)$ }\label{S_lpq}
\setcounter{equation}{0}\setcounter{footnote}{0}
\setcounter{figure}{0}

In this section we illustrate the performance of the WCGA in the space $\SX=\ell^p(\ell^q)$, when $1<p,q<\infty$.
This is the set of all sequences $\bx=(x_{j,k})_{j,k=1}^\infty\subset \SK$ such that
\Be
\|\bx\|:=\Big[\sum_{j=1}^\infty\Big(\sum_{k=1}^\infty|x_{j,k}|^q\Big)^{\frac pq}\Big]^\frac1p<\infty.
\label{lpq}
\Ee
Throughout this section we let $\cD=\{\be_{j,k}\}_{j,k\geq1}$, the canonical basis in the space $\SX$.

\subsection{Norming functionals in $\ell^p(\ell^q)$}
It will be convenient to use the following notation
\[
\Dt_j(\bx):=\Big(\sum_{k=1}^\infty|x_{j,k}|^q\Big)^{\frac 1q},
\]
so that $\|\bx\|=(\sum_{j\geq 1}\Dt_j(\bx)^p)^{1/p}$.

\begin{lemma}
\label{L_Flpq}
When $\bx\in\ell^p(\ell^q)\setminus\{0\}$, the norming functional $F_\bx$ is given by
\Be
F_\bx(\by)=\frac1{\|\bx\|^{p-1}}\,\sum_{j=1}^\infty \Dt_j(\bx)^{p-q}\sum_{k=1}^\infty|x_{j,k}|^{q-2}\bar{x}_{j,k}y_{j,k}.
\label{Flpq}
\Ee
\end{lemma}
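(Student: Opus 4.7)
The plan is to verify the three defining properties of the norming functional: that the right-hand side of \eqref{Flpq} defines a bounded linear functional on $\ell^p(\ell^q)$, that its operator norm equals $1$, and that it takes the value $\|\bx\|$ on $\bx$ itself. Since the norm on $\ell^p(\ell^q)$ is smooth for $1<p,q<\infty$ (indeed uniformly smooth, as follows from the duality $\ell^p(\ell^q)^*=\ell^{p'}(\ell^{q'})$), the norming functional is unique, and these three checks identify it with the displayed formula.

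First, I would plug $\by=\bx$ into the formula and observe that the inner sum collapses to $\sum_k|x_{j,k}|^q=\Dt_j(\bx)^q$. Combining with the weight $\Dt_j(\bx)^{p-q}$ gives $\Dt_j(\bx)^p$, so summing in $j$ and dividing by $\|\bx\|^{p-1}$ recovers $\|\bx\|^p/\|\bx\|^{p-1}=\|\bx\|$, as desired.

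Next, to show $|F_\bx(\by)|\leq \|\by\|$ I would apply H\"older's inequality twice. For the inner sum in $k$, using the conjugate exponents $q'$ and $q$ and the identity $(q-1)q'=q$, one obtains
\[
\sum_k |x_{j,k}|^{q-1}|y_{j,k}|\,\leq\,\Dt_j(\bx)^{q-1}\,\Dt_j(\by).
\]
Inserting this into \eqref{Flpq} and combining exponents yields the weighted sum $\sum_j \Dt_j(\bx)^{p-1}\Dt_j(\by)$. A second application of H\"older in $j$ with exponents $p'$ and $p$ (now using $(p-1)p'=p$) bounds this by $\|\bx\|^{p-1}\|\by\|$, which precisely cancels the normalizing factor.

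The only delicate point is invoking uniqueness of the norming functional, which requires smoothness of the norm. This could be justified either by appealing to the standard fact that $\ell^p(\ell^q)$ is uniformly smooth for $1<p,q<\infty$, or alternatively by computing the G\^ateaux derivative of $\|\cdot\|$ at $\bx$ directly (differentiating under the sum is straightforward since $\bx$ has only countably many coordinates) and matching $\Re F_\bx$ with that derivative via Proposition~\ref{P_dt}. I expect the uniqueness argument to be the cleanest route, with the two H\"older steps being the only genuine computation.
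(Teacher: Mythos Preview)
Your proposal is correct and follows essentially the same route as the paper: verify $F_\bx(\bx)=\|\bx\|$ directly, then bound $|F_\bx(\by)|\le\|\by\|$ by two successive applications of H\"older's inequality (first in $k$ with exponents $q',q$, then in $j$ with exponents $p',p$). The paper's proof omits your explicit remark about uniqueness of the norming functional via smoothness, but this is implicit in the setup of the paper and your inclusion of it only makes the argument more complete.
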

\begin{proof}
It is straightforward to check that $F_\bx(\bx)=\|\bx\|$. So we only need to verify that $|F_\bx(\by)|\leq \|\by\|$. 
This follows from a double use of H\"older's inequality
\Beas 
|F_\bx(\by)| & \leq & \|\bx\|^{-(p-1)}\sum_{j}\Dt_j(\bx)^{p-q}\,\Big(\sum_k|x_{j,k}|^{(q-1)q'}\Big)^{1/{q'}}\,\Dt_j(\by)\\
& = & \|\bx\|^{-(p-1)}\sum_{j}\Dt_j(\bx)^{p-1}\,\Dt_j(\by)\\
& \leq & \|\bx\|^{-(p-1)}\Big(\sum_{j}\Dt_j(\bx)^{(p-1)p'}\Big)^{1/p'}\,\|\by\|=\|\by\|.
\Eeas
\end{proof}

We shall apply the functionals $F_\bx$ to the elements $\be_{j,k}$ of the canonical basis.
\begin{corollary}
\label{c_Flpq}
If $\bx\in\ell^p(\ell^q)\setminus\{0\}$ and $j,k\in\SN$, then
\Be
\big|F_\bx(\be_{j,k})\big|\,=\,\frac{\Dt_j(\bx)^{p-q}\,|x_{j,k}|^{q-1}}{\|\bx\|^{p-1}}\,.
\label{clpq}
\Ee
\end{corollary}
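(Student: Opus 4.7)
The plan is to derive Corollary \ref{c_Flpq} as a direct substitution into the explicit formula for $F_\bx$ supplied by Lemma \ref{L_Flpq}. The canonical basis vector $\be_{j,k}$ has entry $1$ in coordinate $(j,k)$ and zero elsewhere, so only the single summand indexed by that pair survives in the double sum
\[
F_\bx(\by)\,=\,\frac1{\|\bx\|^{p-1}}\sum_{j'} \Dt_{j'}(\bx)^{p-q}\sum_{k'}|x_{j',k'}|^{q-2}\bar{x}_{j',k'}y_{j',k'}.
\]
Setting $y_{j',k'}=\dt_{(j',k'),(j,k)}$ reduces the expression to
\[
F_\bx(\be_{j,k})\,=\,\frac{\Dt_j(\bx)^{p-q}\,|x_{j,k}|^{q-2}\,\bar{x}_{j,k}}{\|\bx\|^{p-1}}.
\]

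Taking absolute values and using $|x_{j,k}|^{q-2}\,|\bar{x}_{j,k}|=|x_{j,k}|^{q-1}$ (which is understood as $0$ when $x_{j,k}=0$ and $q<2$) then yields the stated identity \eqref{clpq}. There is no real obstacle here; the only minor point worth stating is that when $x_{j,k}=0$ the right-hand side of \eqref{clpq} vanishes, in agreement with the left-hand side, since the corresponding summand in \eqref{Flpq} is zero by convention. Thus the corollary follows in one line from the preceding lemma, and no further machinery is required.
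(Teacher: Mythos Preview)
Your proof is correct and is exactly the intended approach: the paper states the corollary without proof, as it follows immediately from Lemma~\ref{L_Flpq} by substituting $\by=\be_{j,k}$ into the explicit formula for $F_\bx$. Your remark on the convention when $x_{j,k}=0$ is a reasonable clarification but not strictly needed.
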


As special cases notice that
\Bi
\item if $\supp\bx\subset\{j_0\}\times \SN$, then $|F_\bx(\be_{j,k})|=\,\dt_{j,j_0}\,|x_{j_0,k}|^{q-1}/\|\bx\|^{q-1}$.
\item if $|\supp\bx\cap(\{j\}\times \SN)|\leq 1$, for all $j$, then $|F_\bx(\be_{j,k})|=\,|x_{j,k}|^{p-1}/\|\bx\|^{p-1}$.
\Ei
In particular, for vectors $\bx$ in these two cases, the WCGA (with $\tau=1$) coincides with the usual TGA; that is, after $N$ steps of the algorithm it picks the 
$N$ largest coefficients, and the remainder $\bx_N$ is the projection of $\bx$ on the subspace spanned by the 
remaining basis vectors.

\subsection{Performance of WCGA in $\ell^p(\ell^q)$}
 
Here we compute the relevant parameters in (i)-(iii) of Theorems \ref{th_tem14} and \ref{th_Ds}.
Since the canonical basis is unconditional we see that property $\Au$ holds with $U=1$.
Property $\At$ is given by the next lemma, which follows immediately from the inclusion $\SX\hookrightarrow \ell^{\max\{p, q\}}$.

\begin{lemma}\label{Lpq2}
For every finite set $A\subset\SN\times\SN$ and every $\bx\in\SX$ we have
\[
\sum_{(j,k)\in A}|x_{j,k}|\leq |A|^r\,\|\bx\|,\quad \mbox{with $r=\max\{\frac1{p'},\frac1{q'}\}$.}
\]
 In particular, property $\At$ holds with parameters $r$ and $V=1$.
\end{lemma}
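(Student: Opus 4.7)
The plan is to deduce the lemma in two quick steps: first establish the continuous (norm $\le 1$) embedding $\SX=\ell^p(\ell^q)\hookrightarrow \ell^m$ with $m=\max\{p,q\}$, and then apply H\"older's inequality in $\ell^m$ to a finite set $A$ of size $|A|$.

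For the embedding, I would split into the two natural cases. If $q\le p$, then for each fixed row $j$, the nesting of $\ell^q\hookrightarrow \ell^p$ gives
\[
\big(\Ts\sum_k|x_{j,k}|^p\big)^{1/p}\le \big(\Ts\sum_k|x_{j,k}|^q\big)^{1/q}=\Dt_j(\bx);
\]
raising to the $p$-th power and summing over $j$ yields $\|\bx\|_{\ell^p}\le \|\bx\|_{\ell^p(\ell^q)}$, i.e.\ $\SX\hookrightarrow \ell^p=\ell^m$. If instead $p<q$, set $a_j:=\Dt_j(\bx)$ and use the same nesting $\ell^p\hookrightarrow \ell^q$ on the outer sum to get $\|\bx\|_{\ell^q}=(\sum_j a_j^q)^{1/q}\le (\sum_j a_j^p)^{1/p}=\|\bx\|$, so $\SX\hookrightarrow \ell^q=\ell^m$.

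Once this embedding is in hand, H\"older's inequality gives
\[
\sum_{(j,k)\in A}|x_{j,k}|\le |A|^{1/m'}\Big(\sum_{(j,k)\in A}|x_{j,k}|^m\Big)^{1/m}\le |A|^{1/m'}\|\bx\|_{\ell^m}\le |A|^{1/m'}\|\bx\|,
\]
which is exactly the stated inequality, since $1/m'=\max\{1/p',1/q'\}=r$ (the two cases $p\ge q$ and $p<q$ correspond to $1/m'=1/p'$ and $1/m'=1/q'$, respectively).

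Finally, to get property $\At(r,1)$ of Section \ref{S_A2A3}, fix finite sets $A\subset B$ and scalars $a_i\in\SK$. Applying the inequality just proved to the vector $\bx=\sum_{i\in B}a_i\be_i$ (whose entries on $A$ are just the $a_i$'s) gives $\sum_{i\in A}|a_i|\le |A|^r\|\bx\|$, which is the defining inequality \eqref{A3} with $V=1$. No separate monotonicity step is required because $\be_i\mapsto \be_i\,\bone_{i\in B}$ already projects $\bx$ onto the set $B$. The main (though modest) point is getting the cases of $m=\max\{p,q\}$ straight so that the exponent $1/m'$ matches the claimed $r=\max\{1/p',1/q'\}$; everything else reduces to one application of H\"older.
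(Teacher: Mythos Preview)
Your proof is correct and follows exactly the approach the paper indicates: the paper simply states that the lemma ``follows immediately from the inclusion $\SX\hookrightarrow \ell^{\max\{p, q\}}$'', and you have supplied the (straightforward) details of that embedding together with the H\"older step. Nothing needs to be changed.
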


The power of uniform smoothness for the standard norm \eqref{lpq} in  $\ell^p(\ell^q)$
can be estimated as follows; see \cite[Proposition 17]{figiel76} or \cite[Theorem 3.5]{kaz13}.

\begin{lemma}\label{Lpq1}
If $1<p,q<\infty$, then $\rho(t)\leq \ga\,t^\sigma$ with $\sigma=\min\{p,q,2\}$.
\end{lemma}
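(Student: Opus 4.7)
The plan is to view $\ell^p(\ell^q)$ as a Bochner-type space $\ell^p(E)$ with $E=\ell^q$, and combine the classical power-type smoothness of scalar $\ell^r$-spaces with a lifting principle. By the Hanner/Clarkson inequalities, for $1<r<\infty$ the scalar space $\ell^r$ has modulus of smoothness of power type $\min\{r,2\}$; applied to $E=\ell^q$ this yields
\[
\rho_E(t)\leq C_q\,t^{\min\{q,2\}}.
\]

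The heart of the argument is the following lifting principle: if $(E,\|\cdot\|_E)$ satisfies $\rho_E(t)\leq A\,t^s$ with $s\leq 2$, then $\ell^p(E)$ satisfies $\rho_{\ell^p(E)}(t)\leq B\,t^{\min\{p,s\}}$. Applied with $s=\min\{q,2\}$, this produces the desired bound with exponent $\sigma=\min\{p,s\}=\min\{p,q,2\}$.

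To prove the lifting I would use the classical equivalence, up to absolute constants, between $u$-smoothness of a Banach space $(Y,\|\cdot\|_Y)$ (with $1<u\leq 2$) and the Clarkson-type inequality
\[
\|y_1+y_2\|_Y^u+\|y_1-y_2\|_Y^u\leq 2\|y_1\|_Y^u+C\,\|y_2\|_Y^u,\quad \forall\, y_1,y_2\in Y.
\]
Setting $u=\min\{p,s\}\leq s$, the space $E$ is $u$-smooth, so the above inequality holds in $E$ coordinatewise for $F_j\pm tG_j$. Using the elementary bounds $a_j,b_j\leq a_j+b_j$ for $a_j,b_j\geq 0$, together with the scalar triangle inequality (Minkowski) in $\ell^{p/u}$ which is valid since $p/u\geq 1$, one aggregates the per-coordinate estimates into the analogous $u$-Clarkson inequality for $\ell^p(E)$ (possibly with larger absolute constants). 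Invoking the equivalence in the opposite direction yields $\rho_{\ell^p(E)}(t)\leq B\,t^u$, as claimed.

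The main obstacle is the equivalence between power-type smoothness and the $u$-Clarkson inequality, specifically the nontrivial direction from $\rho(t)\leq At^u$ to the $u$-Clarkson bound (the reverse direction being immediate by specializing to $y_1=f,\ y_2=tg$ with $\|f\|=\|g\|=1$ and reading off $\rho$). This direction is established in \cite{figiel76} via Figiel's framework of higher-order convexity/smoothness moduli, and is carried out explicitly for the mixed-norm spaces in \cite[Theorem 3.5]{kaz13}; both references produce a constant $\ga$ depending only on $p$ and $q$.
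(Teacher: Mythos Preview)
The paper does not give its own proof of this lemma; it merely quotes the result with references to \cite[Proposition~17]{figiel76} and \cite[Theorem~3.5]{kaz13}, which are exactly the sources you appeal to at the end. So at the level of ``prove by citation'' your proposal coincides with the paper.

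Your attempted self-contained sketch, however, has a genuine gap in the aggregation step. With $a_j=\|F_j+G_j\|_E^u$, $b_j=\|F_j-G_j\|_E^u$, $c_j=\|F_j\|_E^u$, $d_j=\|G_j\|_E^u$ and $r=p/u\ge1$, the coordinatewise $u$-Clarkson bound reads $a_j+b_j\le 2c_j+Cd_j$. The route you describe (use $a_j,b_j\le a_j+b_j$ and then Minkowski in $\ell^r$) only yields
\[
\|a\|_r+\|b\|_r\le 2\,\|a+b\|_r\le 2\big(2\|c\|_r+C\|d\|_r\big)=4\|c\|_r+2C\|d\|_r,
\]
i.e.\ a coefficient $4$ rather than $2$ in front of $\|F\|_{\ell^p(E)}^u$. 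This is \emph{not} a harmless loss: from
\[
\|f+tg\|^u+\|f-tg\|^u\le 4+Ct^u,\qquad \|f\|=\|g\|=1,
\]
the power-mean step gives only $\|f+tg\|+\|f-tg\|\le 2\cdot 2^{1/u}+O(t^u)$, and since $2^{1/u}>1$ for $u>1$ this carries no information on $\rho(t)$. The ``easy direction'' of the equivalence you invoke genuinely requires the sharp constant $2$ on the $\|y_1\|^u$ term, so the inequality you end up with in $\ell^p(E)$ is useless for reading off $u$-smoothness. The lifting in \cite{figiel76} (and the explicit computation in \cite{kaz13}) does not proceed via a bare Minkowski step on the two-point inequality; Figiel's argument goes through his machinery of higher moduli (or equivalently via the dual convexity estimate), which is precisely what keeps the constant $2$ intact. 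Your overall strategy and references are right, but the ``elementary'' aggregation you propose does not work as written.
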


If at this point one applies Theorem \ref{th_tem14}, a simple computation easily leads to
the following.

\begin{corollary}\label{c_beta}
Let $\beta=\max\{\frac{p'}{q'}, \frac{q'}{p'}\}$. Assume also that 
\Be
\min\{p,q\}\leq 2.
\label{pq2}
\Ee
 Then there are constants $C_1$ and $C_2$ such that
\Be
\|\bx-\G_{C_1\,N^\beta}(\bx)\|\leq \,C_2\,\sigma_N(\bx),\quad \forall\, N\geq1,\;\;\bx\in\SX.
\label{Nbeta}
\Ee 
\end{corollary}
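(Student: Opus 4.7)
The plan is to apply Theorem \ref{th_tem14} directly, assembling the three hypotheses from the facts already established for $\SX=\ell^p(\ell^q)$ with the canonical dictionary $\cD=\{\be_{j,k}\}$. First I would check the smoothness hypothesis (i): since $p,q>1$, Lemma \ref{Lpq1} yields $\rho(t)\leq\ga t^{\sigma}$ with $\sigma=\min\{p,q,2\}\in(1,2]$. Next, for hypothesis (ii), Lemma \ref{Lpq2} supplies property $\At(r,V)$ with $r=\max\{1/p',1/q'\}$ and $V=1$, and note that this is really a global statement (valid on all of $\Sigma_N$). For hypothesis (iii), the canonical basis in $\ell^p(\ell^q)$ is $1$-unconditional (coordinatewise majorization preserves the norm in \eqref{lpq}), so property $\Au(U)$ holds with $U=1$, giving $\log(U+1)=\log 2$.

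Feeding these parameters into Theorem \ref{th_tem14} produces \eqref{Nbeta} with $\oldphi(N)=\lfloor C\,N^{r\sigma'}\rfloor$ for a constant $C=C(p,q,\tau)$ obtained from $C_1(\tau,\ga,\sigma)$. The remaining task, and the only place the hypothesis \eqref{pq2} is used, is to verify the identity
\[
r\,\sigma'\,=\,\beta\,=\,\max\Big\{\tfrac{p'}{q'},\tfrac{q'}{p'}\Big\}.
\]

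By the symmetry of $\SX$ under swapping the two indices, I may assume $p\leq q$, so the assumption $\min\{p,q\}\leq 2$ reduces to $p\leq 2$. Then $\sigma=\min\{p,q,2\}=p$, hence $\sigma'=p'$. The relation $p\leq q$ gives $p'\geq q'$, so $r=\max\{1/p',1/q'\}=1/q'$ and likewise $\beta=p'/q'$. Thus $r\sigma'=p'/q'=\beta$, which yields the announced exponent. No nontrivial obstacle arises in this proof: the heavy lifting is carried by Theorem \ref{th_tem14} and Lemmas \ref{Lpq1}, \ref{Lpq2}, and the content of Corollary \ref{c_beta} is precisely the bookkeeping above together with the observation that the hypothesis \eqref{pq2} is exactly what makes the generic exponent $r\sigma'$ coincide with $\beta$ (for $p,q>2$ one would instead get $r\sigma'=2/\min\{p',q'\}$, which is strictly larger than $\beta$ in that regime).
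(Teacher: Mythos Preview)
Your proof is correct and follows exactly the approach the paper intends: the corollary is stated right after Lemmas~\ref{Lpq2} and~\ref{Lpq1} with the sentence ``If at this point one applies Theorem~\ref{th_tem14}, a simple computation easily leads to the following,'' and your write-up is precisely that simple computation. One small wording issue: the reduction to $p\leq q$ is justified not by any symmetry of the space $\SX=\ell^p(\ell^q)$ (which is not isometric to $\ell^q(\ell^p)$), but by the fact that the quantities $\sigma=\min\{p,q,2\}$, $r=\max\{1/p',1/q'\}$ and $\beta=\max\{p'/q',q'/p'\}$ are all symmetric in $(p,q)$, so the algebraic identity $r\sigma'=\beta$ may be checked assuming $p\leq q$ without loss of generality.
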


\BR \label{R_lpq}
If one attempts to use Theorem \ref{th_tem14} in the case $\min\{p,q\}>2$, then $\sigma =2$ in Lemma \ref{Lpq1},
and one only obtains $\|\bx-\G_{C_1\,N^{\beta_1}}(\bx)\|\leq\, C_2\,\sigma_N(\bx)$ with
\[
\beta_1=\max\{\tfrac2{p'},\tfrac2{q'}\}>\max\{\tfrac{p'}{q'}, \tfrac{q'}{p'}\}.
\] 
As we shall see below, this result can be improved using instead Theorem \ref{th_Ds}.
\ER

\subsection{An improvement of the previous bound}

We begin by establishing the validity of property $\D(s, c_1)$.

\begin{proposition}\label{P_distlpq}
Let $1<p,q<\infty$, and $s=\max\{p',q'\}$. Then there exists $c_1=c_1(p,q)>0$ such that, 
\Be
\dist(\bx,[\be_{j,k}])\leq \|\bx\|\,\big(1-c_1|F_\bx(\be_{j,k})|^{s}\big),
\label{distlpq}
\Ee
for all $(j,k)\in\SN\times\SN$ and all $\bx\in \SX=\ell^p(\ell^q)\setminus\{0\}$.
In particular, the canonical basis in $\ell^p(\ell^q)$ has the property  $\D(s, c_1)$ with $s=\max\{p',q'\}$.
\end{proposition}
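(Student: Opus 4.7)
The plan is to reduce the claim to a one-variable scalar inequality by computing the distance explicitly. By homogeneity both sides of \eqref{distlpq} scale with $\|\bx\|$, so I normalize $\|\bx\|=1$. Fix $(j_0,k_0)$, write $a=x_{j_0,k_0}$ and $D=\Dt_{j_0}(\bx)$, and note that the trivial case $D=0$ (which forces $a=0$ and hence $F_\bx(\be_{j_0,k_0})=0$) can be discarded. Since subtracting any multiple of $\be_{j_0,k_0}$ only alters the $j_0$-th block, the scalar minimization is elementary and the minimum is achieved at $\lambda=a$, giving
\[
\dist(\bx,[\be_{j_0,k_0}])^p\,=\, 1-D^p+(D^q-|a|^q)^{p/q}.
\]
Introducing $t=|a|^q/D^q\in[0,1]$, this becomes $\dist^p=1-\eta$ with $\eta=D^p\bigl(1-(1-t)^{p/q}\bigr)\in[0,1]$.

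Next I translate the target into a scalar inequality. From Corollary \ref{c_Flpq} together with $|a|^{q-1}=D^{q-1}t^{1/q'}$,
\[
F:=|F_\bx(\be_{j_0,k_0})|\,=\,D^{p-1}\,t^{1/q'},\qquad\text{so}\qquad F^s=D^{s(p-1)}\,t^{s/q'}.
\]
Applying the concavity bound $(1-\eta)^{1/p}\leq 1-\eta/p$ gives $1-\dist\geq\eta/p$, and the full claim reduces to proving
\[
\tfrac{1}{p}\,D^p\bigl(1-(1-t)^{p/q}\bigr)\,\geq\,c_1\,D^{s(p-1)}\,t^{s/q'}\qquad(D\in(0,1],\ t\in[0,1]).
\]

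I then split on $s=\max\{p',q'\}$. If $p\geq q$, then $s=q'$ and $s/q'=1$; the arithmetic inequality $q'(p-1)\geq p$ (equivalent to $p\geq q$) combined with $D\leq 1$ yields $D^{s(p-1)}\leq D^p$, so the $D$-factors absorb and it remains to check $\bigl(1-(1-t)^{p/q}\bigr)/p\geq c_1 t$. Since $p/q\geq 1$, the convexity bound $(1-t)^{p/q}\leq 1-t$ gives $1-(1-t)^{p/q}\geq t$, so $c_1=1/p$ works. If instead $p<q$, then $s=p'$ and the identity $p'(p-1)=p$ makes the $D$-factors cancel \emph{exactly}; the residual inequality is $\bigl(1-(1-t)^{p/q}\bigr)/p\geq c_1 t^{p'/q'}$. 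Here $p/q<1$, so the concavity bound $(1-t)^{p/q}\leq 1-(p/q)t$ yields LHS $\geq t/q$; and since $p'/q'>1$ with $t\leq 1$ one has $t^{p'/q'}\leq t$, so $c_1=1/q$ suffices. Overall, $c_1=\min\{1/p,1/q\}$ works.

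The main obstacle is the case analysis: one must select the correct direction of Bernoulli's inequality in each regime ($(1-t)^{p/q}\leq 1-t$ when $p/q\geq 1$, versus $(1-t)^{p/q}\leq 1-(p/q)t$ when $p/q<1$), and the argument only comes out clean because of the two arithmetic coincidences $q'(p-1)\geq p\iff p\geq q$ and $p'(p-1)=p$ identically. Everything else is homogeneity plus the explicit form of the norming functional.
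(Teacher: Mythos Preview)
Your proof is correct and follows essentially the same route as the paper's: normalize $\|\bx\|=1$, compute $\dist(\bx,[\be_{j,k}])^p$ explicitly, apply a Bernoulli-type inequality to $(1-t)^{p/q}$, and then again to $(1-\eta)^{1/p}$, arriving at the same constant $c_1=\min\{1/p,1/q\}$. The only organizational difference is that the paper rewrites $D^p t$ via the identity $D^{p-q}|a|^q = |F_\bx(\be_{j,k})|\cdot|a|$ and then bounds $|a|\geq F^{s-1}$ in a single line (using $\min\{p-1,q-1\}=s'-1$), whereas you keep the variables $D$ and $t$ separate and split into the two cases $p\gtrless q$ earlier; the content is the same.
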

\begin{proof}
We may assume that $\|\bx\|=1$ and that $j=k=1$. Using the notation $\Dt_j=\Dt_j(x)=(\sum_k|x_{j,k}|^q)^{1/q}$, we can write
\Be
\dist(\bx,[\be_{1,1}])^p = \sum_{j=2}^\infty \Dt_j^p+ \Big(\Dt_1^q-|x_{1,1}|^q\Big)^{\frac pq} = 1-\Dt_1^p\,+\,\Dt_1^p\Big(1-|\tfrac{x_{1,1}}{\Dt_1}|^q\Big)^{\frac pq}.
\label{dist_aux1}
\Ee
We shall often make use of the following elementary inequality.
\begin{lemma}\label{1ua}
Let $\al>0$. Then
\[
\,\big(1-u\big)^\al
\,\leq\, 1-\min\{1,\al\} u, \quad u\in[0,1].
\]
\end{lemma}

%

In the last expression of  \eqref{dist_aux1} we apply Lemma \ref{1ua}, with $u=|x_{1,1}/\Dt_1|^q$ and $\al=p/q$, so if $c=\min\{1,p/q\}$ we obtain
\Be
\dist(\bx,[\be_{1,1}])^p \leq  1-\,c\,\Dt_1^p\,|\tfrac{x_{1,1}}{\Dt_1}|^q
\,=\,1-\,c\,|F_\bx(\be_{1,1})|\,|x_{1,1}|,
\label{dist_lpq_aux}
\Ee
the last equality due to \eqref{clpq}.
Letting $s=\max\{p',q'\}$, note that $\min\{p-1,q-1\}= s'-1$.
So, using that both $\Dt_1$ and $|x_{1,1}|/\Dt_1$ are $\leq 1$, it follows that
\[
|F_\bx(\be_{1,1})|^{s-1}=\Dt_1^{\frac{p-1}{s'-1}}\,\Big|\frac{x_{1,1}}{\Dt_1}\Big|^{\frac{q-1}{s'-1}}
\,\leq \,|x_{1,1}|.
\]
Therefore, we have shown
\[
\dist(\bx,[\be_{1,1}])^p \leq 1-c|F_\bx(\be_{1,1})|^{s}.
\]
Finally, using again Lemma \ref{1ua} we obtain \eqref{distlpq} with $c_1=c/p=\min\{\frac1p,\frac1q\}$.
\end{proof}

Thus, Theorem \ref{th_Ds} will produce the following improvement over Corollary \ref{c_beta}.

\begin{corollary}\label{c_beta2}
Let $1<p,q<\infty$, and let $\beta=\max\{\frac{p'}{q'}, \frac{q'}{p'}\}$.
 Then there are constants $C_1$ and $C_2$ such that
\Be
\|\bx-\G_{C_1N^\beta}(\bx)\|\leq C_2\sigma_N(\bx),\quad \forall\, N\geq1,\;\;\bx\in\SX.
\label{Nbeta2}
\Ee 
\end{corollary}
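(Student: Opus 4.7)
The plan is simply to verify the three hypotheses of Theorem \ref{th_Ds} for the pair $(\SX,\cD)=(\ell^p(\ell^q),\{\be_{j,k}\})$ and then to compute the resulting exponent $rs$, checking that it equals $\beta$.

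First I would note that the canonical basis $\{\be_{j,k}\}$ is $1$-unconditional with respect to the norm in \eqref{lpq} (this is immediate from the pointwise monotonicity of the norm), so property $\Au$ holds with $U=1$; in particular $\log(U+1)=\log 2$ is an absolute constant. Next, Lemma \ref{Lpq2} gives property $\At(r,V)$ with
\[
V=1 \mand r=\max\Big\{\tfrac{1}{p'},\tfrac{1}{q'}\Big\}.
\]
Finally, Proposition \ref{P_distlpq} supplies property $\D(s,c_1)$ with
\[
s=\max\{p',q'\} \mand c_1=c_1(p,q)>0.
\]
All three hypotheses of Theorem \ref{th_Ds} are now in place.

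Applying Theorem \ref{th_Ds} yields the Lebesgue-type inequality \eqref{oldphi} with
\[
\oldphi(N)\,=\,\big\lfloor C'_1(\tau,c_1,s)(\log 2)\cdot 1\cdot N^{rs}\big\rfloor \,=\,\big\lfloor C_1 N^{rs}\big\rfloor
\]
for some $C_1=C_1(p,q,\tau)>0$, and with $C_2$ any universal constant at least equal to the constant from \eqref{oldphi}. It only remains to check that $rs=\beta$. If $p'\geq q'$, then $1/p'\leq 1/q'$, so $r=1/q'$ and $s=p'$, giving $rs=p'/q'$. If $p'<q'$, then $r=1/p'$ and $s=q'$, giving $rs=q'/p'$. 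In either case $rs=\max\{p'/q',q'/p'\}=\beta$, which finishes the argument.

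There is no real obstacle here, since all the serious work has already been carried out: the hard property $\D(s,c_1)$ with the optimal exponent $s=\max\{p',q'\}$ was obtained in Proposition \ref{P_distlpq}, and Theorem \ref{th_Ds} is designed precisely to convert such bounds into Lebesgue inequalities. The only point worth emphasizing is that, in contrast to the route via Theorem \ref{th_tem14} (which only gives $\beta_1=\max\{2/p',2/q'\}$ when $\min\{p,q\}>2$; see Remark \ref{R_lpq}), the use of the sharper $\D(s,c_1)$ with $s=\max\{p',q'\}$ removes the restriction \eqref{pq2} and produces the improved exponent $\beta$.
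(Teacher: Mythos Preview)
Your proof is correct and follows exactly the same route as the paper: verify $\Au(U)$ with $U=1$ by unconditionality, $\At(r,V)$ with $r=\max\{1/p',1/q'\}$ and $V=1$ from Lemma~\ref{Lpq2}, and $\D(s,c_1)$ with $s=\max\{p',q'\}$ from Proposition~\ref{P_distlpq}, then apply Theorem~\ref{th_Ds} and check $rs=\beta$. The paper merely asserts that Theorem~\ref{th_Ds} yields the corollary, so your write-up is in fact more detailed than the original.
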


This proves the first assertion in Theorem \ref{th_lpq}.

\subsection{Optimality of the bound in \eqref{Nbeta2}}
We now prove the last assertion in Theorem \ref{th_lpq}.
The lower bounds will be obtained by testing with suitable examples.
For simplicity we assume $\tau=1$ and $C=1$.

\begin{proposition}\label{P_psi}
Let $1<p,q<\infty$. Suppose that 
\Be
\|\bx-\G_{\psi(N)}\bx\|\leq \sigma_N(\bx),\quad\forall\;N\geq1,\;\;\bx\in\SX.
\label{Gphi}
\Ee
Then, there exists $c_{p,q}>0$ such that
\Be
\psi(N)\geq\,c_{p,q}\, N^{\beta},\quad\mbox{with}\quad \beta=\max\{\tfrac{p'}{q'}, \tfrac{q'}{p'}\}.
\label{philow}
\Ee 
\end{proposition}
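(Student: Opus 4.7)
The plan is to exhibit, for each $N$, a vector $\bx=\Phi+\by$ in $\SX$ with $\Phi$ being $N$-sparse and $\|\by\|$ comparably small, such that the WCGA is forced to select many ``wrong'' coordinates from $\by$ before ever touching $\Phi$. Since $\sigma_N(\bx)\leq \|\by\|$, while the remainder $\bx_n$ still contains $\Phi$ intact during this initial phase, one obtains a lower bound on $\psi(N)$ by balancing $\|\Phi\|$ against $C\|\by\|$. Because the exponent $\beta$ is symmetric in $(p,q)$ while the space $\ell^p(\ell^q)$ is not, two separate constructions are required, depending on whether $p\leq q$ or $p\geq q$ (the case $p=q$ gives $\beta=1$, which is trivial).

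For $p\leq q$ (so $\beta=p'/q'$), we take
\[
\Phi:=\sum_{k=1}^N\be_{1,k}\mand \by:=\delta\sum_{j=2}^{M+1}\be_{j,1},
\]
with $M:=\lfloor c\,N^{p'/q'}\rfloor$ for a small constant $c>0$ to be chosen, and $\delta:=2^{1/(p-1)}\,N^{(p-q)/(q(p-1))}$. From formula \eqref{clpq} an immediate calculation gives $|F_\bx(\be_{j,1})|/|F_\bx(\be_{1,k})|=\delta^{p-1}/N^{(p-q)/q}=2$ for all $j\in\{2,\ldots,M+1\}$ and $k\in\{1,\ldots,N\}$. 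Crucially, this ratio is \emph{invariant} under the removal of any collection of $\by$-coordinates, since the common factor $\|\bx_n\|^{p-1}$ in the denominator cancels. Hence the WCGA selects all $M$ coordinates of $\by$ before any of $\Phi$, so $\|\bx_n\|\geq\|\Phi\|=N^{1/q}$ for every $n<M$. If the Lebesgue inequality held with $\psi(N)<M$, then $N^{1/q}\leq C\|\by\|=C\delta M^{1/p}$, which after substituting $\delta$ and using the identity $p(q-1)/(q(p-1))=p'/q'$ rearranges to $M\geq c'(C,p,q)\,N^{p'/q'}$. Choosing $c$ below this threshold produces the desired contradiction.

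For $p\geq q$ (so $\beta=q'/p'$), we swap the geometric roles by taking
\[
\Phi:=a\sum_{j=1}^N\be_{j,1}\mand \by:=\delta\sum_{k=1}^K\be_{N+1,k},
\]
with $K:=\lfloor c\,N^{q'/p'}\rfloor$ and constants $a,\delta>0$ satisfying $\delta^{p-1}K^{(p-q)/q}=2\,a^{p-1}$. By \eqref{clpq} the WCGA initially prefers $\by$-coordinates to $\Phi$-coordinates by a factor of $2$. The \emph{main obstacle} here, absent in the first case, is that each removal of a $\by$-coordinate decreases $\Dt_{N+1}(\bx_n)$ and thereby weakens the preference. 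Letting $K_n$ denote the number of $\by$-coordinates still remaining in row $N+1$, the corresponding ratio becomes $\delta^{p-1}K_n^{(p-q)/q}/a^{p-1}$, which stays strictly above $1$ precisely while $K_n>2^{-q/(p-q)}K$. Thus the WCGA selects at least $(1-2^{-q/(p-q)})K$ coordinates of $\by$ before any of $\Phi$, and the inequality $\|\Phi\|=aN^{1/p}>C\|\by\|=C\delta K^{1/q}$ now rearranges, after eliminating $a/\delta$ via the first condition, to $K\leq c''(C,p,q)\,N^{q'/p'}$. Choosing $c$ below this threshold then yields $\psi(N)\geq c_1\,N^{q'/p'}$, completing the proof.
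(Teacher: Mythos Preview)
Your proof is correct and follows essentially the same construction as the paper's: one builds $\bx$ as the sum of a block spread across many rows (one entry per row) and a block concentrated in a single row, with the coefficient in the single-row block tuned so that the norming functional values on the two blocks are balanced; the two cases $p'\geq q'$ and $q'\geq p'$ then correspond to which block plays the role of the $N$-sparse approximant $\Phi$. Your handling of the second case (where removing entries from the single-row block changes $\Delta_{N+1}$ and hence the preference ratio) is also parallel to the paper's, which doubles the size of that block to keep the ratio $\geq 1$ over the first half of the selections.

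The one genuine difference is that the paper sets the coefficients so that the norming functional values on the two blocks are \emph{exactly equal}, and then exploits the freedom in the WCGA (with $\tau=1$) to declare that the algorithm picks the ``bad'' block first. You instead insert an extra factor of $2$ so that the bad block is \emph{strictly} preferred, forcing the selection regardless of how ties are broken. This is a slightly more robust formulation (it gives a lower bound valid for every realization of the WCGA, not just an adversarial one), at the minor cost of tracking how the ratio degrades in the second case. Both arguments yield the same exponent $\beta=\max\{p'/q',\,q'/p'\}$.
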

\begin{proof}
Pick $\bx=\bbone_A+n^\al\bbone_B$, where
\Bi
\item $|A|=m$ and $|A\cap(\{j\}\times\SN)|=1$, for $1\leq j\leq m$, and 0 otherwise.
\item $|B|=n$ and $B\subset\{m+1\}\times\SN$.
\item $\al=p'(\frac1{q'}-\frac1{p'})$.
\Ei
Observe that with this choice we have
\Be
|F_\bx(\be_{j,k})|=\|\bx\|^{-(p-1)}, \quad (j,k)\in A\cup B.
\label{F1}
\Ee
Indeed, if $(j,k)\in A$ then $x_{j,k}=\Dt_j(\bx)=1$ and \eqref{F1} follows from \eqref{clpq}. If $(j,k)\in B$ then $j=m+1$ and
\[
\Dt_{m+1}(\bx)^{p-q}|x_{m+1,k}|^{q-1}= n^{\al(p-q)}\,n^{\frac{p-q}q}\,n^{\al(q-1)}=1.
\]
In the case $p'\geq q'$, for each $1\leq \ell\leq m$ we shall pick $\G_\ell\bx=\bbone_{A_\ell}$, where $A_{\ell-1}\subset A_{\ell}\subset A$ with $|A_\ell|=\ell$. Observe that
this is possible because the equality in \eqref{F1} continues to hold when $\bx$ is replaced by each remainder $\bx_\ell=\bx-\G_\ell\bx$. 
So if we let $m=\psi(n)$, then \eqref{Gphi} implies
\[
n^\al\|\bbone_B\|=\|\bx-\G_m\bx\|\leq \sigma_n(\bx)\leq \|\bx-n^\al\bbone_B\|=\|\bbone_A\|=m^{1/p}.
\] 
Since $\|\bbone_B\|=n^{1/q}$, this implies 
\Be
\psi(n)=m\geq n^{p(\al+\frac1q)}=n^{\frac{p'}{q'}},
\label{phipq}
\Ee
where in the last equality we use the expression of $\al$.

In the case $q'\geq p'$ we use the same example, except that we take $|B|=2n$.
For each $1\leq \ell\leq n$, this time we pick $\G_\ell\bx=n^\al\bbone_{B_\ell}$ where $B_{\ell-1}\subset B_{\ell}\subset B$ with $|B_\ell|=\ell$. 
To justify this choice at each step, observe that, if $(j,k)\in B\setminus B_{\ell}$ then,
\Beas
\|\bx_{\ell}\|^{p-1}|F_{\bx_\ell}(\be_{j,k})| & = & \Dt_{m+1}(\bx_\ell)^{p-q}\,n^{\al(q-1)}= n^{\al(p-1)}\,(2n-\ell)^{\frac{p-q}q}\\
 & = & \Big(\frac{2n-\ell}n\Big)^{\frac pq-1}\geq1,
\Eeas
while for $(j,k)\in A$ we have $\|\bx_{\ell}\|^{p-1}|F_{\bx_\ell}(\be_{j,k})|=1$. So, if we let $n=\psi(m)$, from \eqref{Gphi} we have
\[
m^{1/p}=\|\bbone_A\|\leq\|\bx-\G_n\bx\|\leq \sigma_m(\bx)\leq \|\bx-\bbone_A\|=n^\al\|\bbone_B\|=2^{\frac1q}n^{\al+\frac1q},
\]
which implies 
\Be
\psi(m)=n \geq \,c_{p,q}\,m^{\frac1{p(\al+1/q)}}\,=\,c_{p,q}\,m^{\frac{q'}{p'}}.
\label{phiqp}
\Ee
Combining \eqref{phipq} and \eqref{phiqp} one obtains \eqref{philow}.
\end{proof}

\Proofof{Theorem \ref{th_lpq}}
Combine Corollary \ref{c_beta2} and Proposition \ref{P_psi}.
\ProofEnd

\subsection{WCGA vs TGA in $\ell^p(\ell^q)$}

Let $G_n(\bx)$ denote the usual thresholding greedy algorithm (TGA) applied to $\bx$, with regard to the canonical basis in $\SX=\ell^p(\ell^q)$.
The following result is a consequence of \cite[Theorem 5]{KT04}. 

\begin{proposition}\label{p_b}
Let $1<p,q<\infty$. Then there are constants $C_1, C_2$ such that
\Be
\|\bx-G_{C_1N^{b}}(\bx)\|\leq C_2\,\sigma_N(\bx),\quad \forall\; N\geq1,\;\;\bx\in\SX,
\label{Nb}
\Ee
with $b=\max\{\frac pq,\frac qp\}$.
\end{proposition}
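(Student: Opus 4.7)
The plan is to deduce Proposition \ref{p_b} as a straightforward application of \cite[Theorem 5]{KT04}, which provides Lebesgue--type inequalities for the TGA in terms of the fundamental (democracy) functions of an unconditional basis. The role of the specific space $\ell^p(\ell^q)$ is then confined to computing these functions explicitly.

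First I would verify the structural hypotheses. The canonical basis $\cD=\{\be_{j,k}\}$ is a 1-unconditional normalized Schauder basis of $\SX=\ell^p(\ell^q)$, so it is in particular quasi-greedy with good constants. The relevant result from \cite{KT04} gives, for any unconditional basis $\Psi$, a Lebesgue--type inequality of the form
\[
\|\bx-G_{M}(\bx)\|\le C\,\sigma_N(\bx),
\]
valid as soon as $M$ is large enough that the left democracy function at $M$ dominates the right democracy function at $N$; more precisely, it suffices that $h^\ell_M(\Psi)\gtrsim h^r_N(\Psi)$, where
\[
h^r_N(\Psi):=\sup_{|A|\le N}\|\bbone_A\|,\quad h^\ell_N(\Psi):=\inf_{|A|\ge N}\|\bbone_A\|.
\]

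Second, I would compute both democracy functions for the canonical basis in $\ell^p(\ell^q)$. Writing $n_j=|A\cap(\{j\}\times\SN)|$ one has
\[
\|\bbone_A\|\,=\,\Big(\sum_{j}n_j^{p/q}\Big)^{1/p},\qquad \sum_j n_j=|A|=N.
\]
When $p\ge q$ the exponent $p/q\ge1$ makes this sum convex in $(n_j)$, so it is maximized by concentrating $A$ in a single row (value $N^{1/q}$) and minimized by spreading $A$ one entry per row (value $N^{1/p}$); the cases swap when $p\le q$. In every case
\[
h^r_N\,\asymp\, N^{1/\min(p,q)}\mand h^\ell_N\,\asymp\, N^{1/\max(p,q)}.
\]

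Third, I would solve for $M$. The inequality $h^\ell_M\gtrsim h^r_N$ becomes $M^{1/\max(p,q)}\gtrsim N^{1/\min(p,q)}$, i.e.
\[
M\,\gtrsim\,N^{\max(p,q)/\min(p,q)}=N^{\max\{p/q,\,q/p\}}=N^{b}.
\]
Thus taking $M=C_1 N^{b}$ with $C_1$ large enough fulfills the hypothesis of \cite[Theorem~5]{KT04}, yielding \eqref{Nb} with some $C_2=C_2(p,q)$.

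The only non-routine step is matching our formulation to the precise statement in \cite{KT04}: one has to recall that their Lebesgue bound is controlled by the ratio of right to left democracy functions (and that unconditionality of $\cD$ eliminates any extra Schauder-basis constants). Once this is in hand, the sharp exponent $b=\max\{p/q,q/p\}$ is forced by the two extreme configurations (a single row versus one-entry-per-row), and no further analysis of the TGA itself is needed.
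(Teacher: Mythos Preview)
Your proposal is correct and follows exactly the route indicated in the paper: the paper does not give an independent proof of Proposition~\ref{p_b} but simply states that it is ``a consequence of \cite[Theorem 5]{KT04}'', and you have supplied the routine details (computing $h^r_N\asymp N^{1/\min(p,q)}$ and $h^\ell_N\asymp N^{1/\max(p,q)}$ for the canonical basis of $\ell^p(\ell^q)$, then solving $h^\ell_M\gtrsim h^r_N$ for $M$) that the paper leaves implicit.
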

\BR
The exponent $b=\max\{\frac pq,\frac qp\}$ in \eqref{Nb} is best possible, in view of \cite[Theorem 3.2]{KT04}.
\ER

So one may pose the question of when WCGA performs better than TGA, in the sense that 
the power $\beta$ in Corollary \ref{c_beta2} is smaller than the power $b$ in \eqref{Nb}.
Assuming $p> q$, this is equivalent to
\[
\beta=\frac{q'}{p'}\leq \frac pq=b \iff qq'\leq pp'\iff \quad q\geq p' .
\]
If $p< q$, then $\beta\leq b$ iff $p\geq q'$. Thus, overall the WCGA performs better than the TGA, in the sense that $\beta\leq b$, if and only if $q\geq p'$ (or $q=p$).

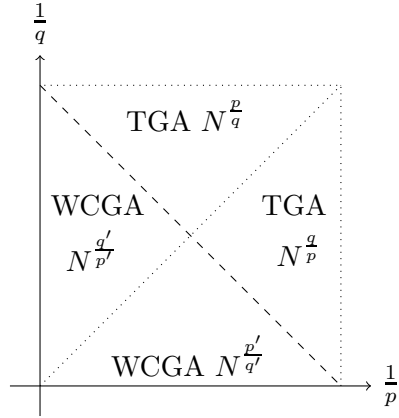
\begin{figure}[ht]
 \centering
{\begin{tikzpicture}[scale=4]
\draw[->] (-0.1,0.0) -- (1.1,0.0) node[right] {$\frac{1}{p}$};
\draw[->] (0.0,-0.1) -- (0.0,1.1) node[above] {$\frac1q$};

\draw[dotted] (1.0,0.0) -- (1.0,1.0)--(0,1);
\draw[dotted] (0,0.0) -- (1.0,1.0);
\draw[dashed] (0.0,1.0) -- (1.0,0);

\draw  (0.25,0.9) node [right] {{\small TGA $N^{\frac pq}$}};
\draw  (0,0.6) node [right] {{\small WCGA}};
\draw  (0.05,0.45) node [right] {{\small $N^{\frac{q'}{p'}}$}};

\draw  (0.2,0.1) node [right] {{\small WCGA $N^{\frac {p'}{q'}}$}};

\draw  (0.7,0.6) node [right] {{\small TGA}};
\draw  (0.75,0.45) node [right] {{\small $N^{\frac{q}{p}}$}};

\end{tikzpicture}
}
\caption{Number of iterations (modulo constants)  of TGA or WCGA to reach $\sigma_N(\bx)$ in $\ell^p(\ell^q)$. }\label{fig1}
\end{figure}

\section{The WCGA in the spaces $\fpq$ }\label{S_fpq}
\setcounter{equation}{0}\setcounter{footnote}{0}
\setcounter{figure}{0}

In this section we consider a class of sequence spaces $\fpq$ related with the 
family of Triebel-Lizorkin spaces.
In the next section we shall specialize to the case $q=2$, and deduce the results 
for the Haar system in $(L^p,\stri{\cdot})$ asserted in Theorem \ref{th_haar}.

Throughout this section we make use of the following notation. 
We fix $d\geq 1$ and let $\sRd$ 
be the set of all dyadic \emph{rectangles} $I\subset[0,1]^d$.
That is, $I=I_1\times\ldots\times I_d$, with each $I_i=2^{-j_i}(k_i+[0,1))$, for some $j_i, k_i\in\SN_0$ and $0\leq k_i<2^{j_i}$, when $i=1,\dots, d$. When $d=1$ we will write $\sR_1=\sD$, so that $\sRd=\sD\times\ldots\times\sD$. 

For fixed $d\geq1$ and $1<p,q<\infty$, we consider the space $\SX=\fpq=\fpq(\sRd)$, defined as  the set of all sequences $\bx=(x_I)_{I\in\sRd}$ such that 
\Be
\|\bx\|=\|\bx\|_{\fpq}:=\Big\|\big(\sum_{I\in\sRd}\big|x_I\bp\big|^q\big)^{1/q}\Big\|_{L^p([0,1]^d)},
\label{norm_fpq}
\Ee
where $\bp=|I|^{-1/p}\bbone_I$ is the $L^p$-normalized characteristic function of $I$.
We shall often use the notation
\[
(S_q\bx)(u):= \big(\sum_{I\in\sRd}\big|x_I\bp(u)\big|^q\big)^{1/q}, \quad u\in[0,1]^d.
\]
Throughout this section, $\cD=\{\be_I\}_{I\in\sRd}$ will be the canonical basis in $\fpq$.

\subsection{Norming funcionals in $\fpq$}
\label{Ssec2}

\begin{lemma}
\label{L_Ffpq}
When $\bx\in\fpq\setminus\{0\}$, the norming functional $F_\bx$ is given by
\Be \label{2-1a}
F_\bx(\by) = \frac1{\|\bx\|^{p-1}}\int [(S_q\bx)(u)]^{p-q} \Big(\sum_{I\in\sRd} |x_{I}|^{q-2} \,{\overline{x_I}}\,y_I \, |\bp(u)|^q\Big) du\,.
\Ee
\end{lemma}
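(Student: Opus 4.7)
The plan is to follow the same two-step template used for Lemma \ref{L_Flpq} in the $\ell^p(\ell^q)$ case: verify that $F_\bx(\bx)=\|\bx\|$, and then show $|F_\bx(\by)|\leq\|\by\|$ via two successive applications of H\"older's inequality (first in the sum over $I$, then in the integral over $u$). The formula is manifestly linear in $\by$ (with the convention $|x_I|^{q-2}\overline{x_I}=0$ when $x_I=0$), so once these two facts are in hand, $F_\bx$ is indeed a norming functional in $\SX^*$.

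\textbf{Step 1 (identity on $\bx$).} Setting $\by=\bx$, the inner sum becomes $\sum_I |x_I|^q|\bp(u)|^q=[(S_q\bx)(u)]^q$, so the integrand collapses to $[(S_q\bx)(u)]^{p-q}\cdot[(S_q\bx)(u)]^q=[(S_q\bx)(u)]^p$. Integrating over $[0,1]^d$ gives $\|\bx\|^p$, and dividing by $\|\bx\|^{p-1}$ yields $\|\bx\|$.

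\textbf{Step 2 (bound $|F_\bx(\by)|\leq\|\by\|$).} First, for each fixed $u$ I would apply H\"older with exponents $q'$ and $q$ to the inner sum:
\[
\Big|\sum_I |x_I|^{q-2}\overline{x_I}\,y_I\,|\bp(u)|^q\Big|\,\leq\,\Big(\sum_I |x_I|^{(q-1)q'}|\bp(u)|^q\Big)^{1/q'}\Big(\sum_I|y_I|^q|\bp(u)|^q\Big)^{1/q}.
\]
Using $(q-1)q'=q$ this equals $[(S_q\bx)(u)]^{q-1}\,(S_q\by)(u)$. Combining with the outer weight $[(S_q\bx)(u)]^{p-q}$ leaves the integrand bounded by $[(S_q\bx)(u)]^{p-1}(S_q\by)(u)$. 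Now apply H\"older in the integral with exponents $p'$ and $p$, noting $(p-1)p'=p$:
\[
\int [(S_q\bx)(u)]^{p-1}(S_q\by)(u)\,du\,\leq\,\Big(\int[(S_q\bx)(u)]^p\,du\Big)^{1/p'}\Big(\int[(S_q\by)(u)]^p\,du\Big)^{1/p}=\|\bx\|^{p-1}\|\by\|.
\]
Dividing by $\|\bx\|^{p-1}$ gives $|F_\bx(\by)|\leq\|\by\|$, as required.

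There is no substantial obstacle here; the argument is essentially a direct transcription of the $\ell^p(\ell^q)$ computation, with the discrete sum over the second index replaced by an integral over $u\in[0,1]^d$ and the role of $\Delta_j(\bx)$ played by the pointwise square-function-type quantity $(S_q\bx)(u)$. The only points requiring care are the algebraic identities $(q-1)q'=q$ and $(p-1)p'=p$, which ensure the powers match up cleanly, and the harmless convention for $|x_I|^{q-2}\overline{x_I}$ at indices where $x_I=0$.
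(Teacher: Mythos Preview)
Your proposal is correct and follows essentially the same approach as the paper: verify $F_\bx(\bx)=\|\bx\|$ directly, then bound $|F_\bx(\by)|\le\|\by\|$ by a double application of H\"older's inequality (first in the sum over $I$ with exponents $q',q$, then in the integral over $u$ with exponents $p',p$), using the identities $(q-1)q'=q$ and $(p-1)p'=p$. The paper's proof is a slightly terser version of exactly this argument.
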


\begin{proof}
It is easy to see that $F_\bx(\bx)=\|\bx\|$, so we only need to verify that $|F_\bx(\by)|\leq \|\by\|$. 
This follows easily from a double use of H\"{o}lder's inequality 
	\begin{eqnarray*}
		\|\bx\|^{p-1}|F_\bx(\by)| &\leq&  
		\int  (S_q\bx)^{p-q} \Big[\sum_{I} |x_I|^{(q-1)q'}\bp^{q}\Big]^\frac1{q'} \,\Big[\sum_I|y_I|^q\bp^q\Big]^\frac1q du\,\\
& = & 	\int  (S_q\bx)^{p-1}\,(S_q\by)\,  du\;\leq\; 	\|\bx\|^{p-1}\,\|\by\|\,.
	\end{eqnarray*}
\end{proof}

If we specialize $F_\bx$ to the elements $\be_I$ of the canonical basis we obtain.
\begin{corollary}
\label{c_Ffpq}
If $\bx\in\fpq\setminus\{0\}$ and $I\in\sRd$, then
\Be
\big|F_\bx(\be_I)\big|\,=\,\frac{|x_I|^{q-1}}{\|\bx\|^{p-1}}\,\frac1{|I|^{q/p}}\int_I|S_q\bx|^{p-q}.
\label{2-2}
\Ee
\end{corollary}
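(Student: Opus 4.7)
The plan is to derive the formula by substituting $\by=\be_I$ into the expression for $F_\bx(\by)$ provided by Lemma \ref{L_Ffpq} and then simplifying. Since $(\be_I)_J=\dt_{I,J}$, the inner sum $\sum_{J\in\sRd}|x_J|^{q-2}\overline{x_J}\,(\be_I)_J\,|\mathbbm{1}_{J,p}(u)|^q$ collapses to the single term corresponding to $J=I$, namely $|x_I|^{q-2}\overline{x_I}\,|\bp(u)|^q$, where here $\bp=|I|^{-1/p}\bbone_I$.

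Next, I would use the explicit form of $\bp$ to rewrite $|\bp(u)|^q=|I|^{-q/p}\bbone_I(u)$, which restricts the integration over $[0,1]^d$ to the rectangle $I$ and pulls the factor $|I|^{-q/p}$ outside the integral. At this point, formula \eqref{2-1a} reduces to
\[
F_\bx(\be_I)\,=\,\frac{|x_I|^{q-2}\,\overline{x_I}}{\|\bx\|^{p-1}}\,\frac{1}{|I|^{q/p}}\int_I [(S_q\bx)(u)]^{p-q}\,du.
\]
Finally, taking absolute values and noting that $|x_I|^{q-2}\cdot|x_I|=|x_I|^{q-1}$ (and that the remaining integrand is nonnegative) produces the claimed identity \eqref{2-2}.

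There is no real obstacle in this derivation — it is a direct specialization of the general formula, and the only minor care needed is bookkeeping of the index on $\bp$, distinguishing the running index $J$ in the sum from the fixed index $I$ associated with $\be_I$.
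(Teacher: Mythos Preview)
Your proposal is correct and is precisely the intended derivation: the paper states this as a corollary of Lemma~\ref{L_Ffpq} without separate proof, since it follows by the direct substitution $\by=\be_I$ and simplification you describe.
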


\BR
As a special case, if $\supp\bx$ consists of pairwise disjoint $I$'s, one has
\[
\big|F_\bx(\be_I)\big|\,=\,{|x_I|^{p-1}}/{\|\bx\|^{p-1}}.
\]
So, in this case WCGA and TGA coincide.
\ER

\subsection{Distance to subspaces}
\label{Ssec3}

We study the property $\D(s,c_1)$ in the spaces $\fpq(\sRd)$.
This will make unnecessary to compute
the modulus of smoothness for $\|\cdot\|_{\fpq}$ (which seems not to appear in the literature).

\begin{lemma}
\label{L_Dsfpq}
Let $1<p,q<\infty$, and let $s=s(p,q)=\max\{p', q'\}$. 
Then, for all $\bx\in\fpq\setminus\{0\}$ it holds
\Be \label{2-1}
\dist(\bx,[\be_I])\leq \,\|\bx\|\Big(1-c_{p,q}\big|F_\bx(\be_I)\big|^s\Big),\quad \forall\,I\in\sRd,
\Ee
with $c_{p,q}=\min\{1/p,1/q\}$. In particular, the canonical basis in $\fpq$ has the property  $\D(s, c_1)$ with $s=\max\{p',q'\}$.
\end{lemma}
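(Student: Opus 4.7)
The proof will follow the blueprint of Proposition \ref{P_distlpq}, with integrals over the dyadic rectangle $I$ replacing the block $\ell^q$-norm $\Dt_1(\bx)$. By homogeneity I assume $\|\bx\|=1$ and fix $I\in\sRd$. I first observe that $\dist(\bx,[\be_I])$ is realised at $c=x_I$: since
\[
S_q(\bx-c\be_I)^q \,=\, (S_q\bx)^q \,+\, \big(|x_I-c|^q - |x_I|^q\big)\,\bbone_{I,p}^q
\]
and $(S_q\bx)^q\geq |x_I|^q\bbone_{I,p}^q$ pointwise, the integrand of $\|\bx-c\be_I\|^p$ is non-decreasing in $|x_I-c|^q$. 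Splitting $\int=\int_{I^c}+\int_I$ gives
\[
1 - \dist(\bx,[\be_I])^p \,=\, \int_I (S_q\bx)^p \left[1 - \Big(1-\frac{|x_I|^q |I|^{-q/p}}{(S_q\bx)^q}\Big)^{p/q}\right] du.
\]

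The next step is to apply Lemma \ref{1ua} with $\al=p/q$ inside the integrand, and then to identify $\int_I (S_q\bx)^{p-q}$ via Corollary \ref{c_Ffpq}. Setting $c=\min\{1,p/q\}$ this produces
\[
1 - \dist(\bx,[\be_I])^p \,\geq\, c\,|x_I|^q |I|^{-q/p}\int_I (S_q\bx)^{p-q}\,du\,=\,c\,|F_\bx(\be_I)|\,|x_I|.
\]

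The crux of the proof is to upgrade the factor $|x_I|$ to $|F_\bx(\be_I)|^{s-1}$. This splits into two structurally different cases according to the sign of $p-q$. When $p\geq q$, so $s=q'$, H\"older's inequality with exponents $\big(\tfrac{p}{p-q},\tfrac{p}{q}\big)$ yields $\int_I (S_q\bx)^{p-q}\leq \big(\int_I(S_q\bx)^p\big)^{(p-q)/p}|I|^{q/p}\leq |I|^{q/p}$; plugging this into \eqref{2-2} gives $|F_\bx(\be_I)|\leq |x_I|^{q-1}$, i.e., $|x_I|\geq |F_\bx(\be_I)|^{1/(q-1)}=|F_\bx(\be_I)|^{s-1}$. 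When $q\geq p$, so $s=p'$ and $p-q\leq 0$, I instead use the pointwise lower bound $(S_q\bx)(u)\geq |x_I||I|^{-1/p}$ on $I$, which flips to $(S_q\bx)^{p-q}\leq |x_I|^{p-q}|I|^{-(p-q)/p}$ on $I$, hence $\int_I (S_q\bx)^{p-q}\leq |x_I|^{p-q}|I|^{q/p}$; then \eqref{2-2} produces $|F_\bx(\be_I)|\leq |x_I|^{p-1}$, which again rearranges to $|x_I|\geq |F_\bx(\be_I)|^{s-1}$.

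Combining, $1 - \dist(\bx,[\be_I])^p \geq c\,|F_\bx(\be_I)|^s$, and a final invocation of Lemma \ref{1ua} with $\al=1/p<1$ produces
\[
\dist(\bx,[\be_I])\leq \big(1 - c|F_\bx(\be_I)|^s\big)^{1/p}\leq 1 - \tfrac{c}{p}|F_\bx(\be_I)|^s \,=\, 1 - \min\{\tfrac1p,\tfrac1q\}|F_\bx(\be_I)|^s,
\]
which is \eqref{2-1}. The main obstacle is the case analysis at the third step: the bound $|x_I|\geq |F_\bx(\be_I)|^{s-1}$ requires two distinct estimates on $\int_I (S_q\bx)^{p-q}$, one integrated (H\"older) when $p\geq q$ and one pointwise when $q\geq p$, precisely so that the \emph{larger} of the two exponents $p',q'$ ends up controlling the power of $|F_\bx(\be_I)|$ that appears in $\D(s,c_1)$.
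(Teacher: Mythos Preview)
Your proof is correct and follows essentially the same route as the paper's own argument: normalise to $\|\bx\|=1$, expand $\dist(\bx,[\be_I])^p=\|\bx-x_I\be_I\|^p$, apply Lemma \ref{1ua} with $\al=p/q$ to reach $1-\dist^p\geq c\,|x_I|\,|F_\bx(\be_I)|$, then treat the two cases $p\gtrless q$ separately (H\"older when $p\geq q$, the pointwise bound $(S_q\bx)\geq |x_I\bp|$ when $p\leq q$) to obtain $|x_I|\geq |F_\bx(\be_I)|^{s-1}$, and close with Lemma \ref{1ua} at exponent $1/p$. The only cosmetic differences are that you justify explicitly why the minimiser is $c=x_I$ (the paper simply asserts $\dist(\bx,[\be_I])^p=\|\bx-x_I\be_I\|^p$), and you package the final constant as $c/p=\min\{1/p,1/q\}$ in one line rather than writing the two cases out separately.
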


\begin{proof}
We may assume that $\|\bx\|=1$. Then, 
\[
\dist(\bx,[\be_I])^p = \|\bx-x_I \be_I\|^p 
= \int  (S_q\bx)(u)^p \Big[ 1 - \frac{|x_I\bp(u)|^q}{(S_q\bx)(u)^q}\Big]^{p/q}\,du,
\]
where it is understood that the integral is taken over the set $\{u\mid (S_q\bx)(u)\not=0\}$.
Setting $c:= \min\{ 1, p/q \}$ and applying 
Lemma \ref{1ua} we obtain
\Be
\dist(\bx,[\be_I])^p  \leq    \int  (S_q\bx)^p\Big[ 1 - c\,\Big|\frac{x_I\bp}{S_q\bx}\Big|^q\Big]\,du\,=\,
1\,-\,c\,|x_I|\,\big|F_{\bx}(\be_I)\big|,
\label{dpcases}
\Ee
using in the last step Corollary \ref{c_Ffpq} (and $\|\bx\|=1$). We now separate two cases:

\subsubsection*{a) Case $1< p \leq q$.} Since $|x_I\bp(u)|\leq (S_q\bx)(u)$ and $p-q \leq 0$,
we have
$$
(S_q\bx)(u)^{p-q} \leq |x_I\bp(u)|^{p-q}\,.
$$
Integration over $I$ gives
\[
\int (S_q\bx)^{p-q}\,\bp^q \leq |x_I|^{p-q}\,\int\bp^p=|x_I|^{p-q}.
\]
Then, using \eqref{2-2} we see that
\[
|F_\bx(\be_I)| \,= \, |x_I|^{q-1}\,\int (S_q\bx)^{p-q}\,\bp^q \leq |x_I|^{p-1}.
\]
Thus, we have obtained a lower bound for $|x_I|$, which inserted into \eqref{dpcases} gives
\[
\dist(\bx,[\be_I])^p\,\leq \, 1\,-\, c\,\,\big|F_{\bx}(\be_I)\big|^{1+\frac1{p-1}}\,=\,1\,-\, c\,\,\big|F_{\bx}(\be_I)\big|^{p'}\,.
\]
Finally, since $c=p/q$, a last use of Lemma \ref{1ua} 
 (with $\al= 1/p < 1$) gives
\begin{equation} \label{4-2}
\dist(\bx,[\be_I]) \,\leq \,1\,-\, q^{-1}\,\,\big|F_{\bx}(\be_I)\big|^{p'}\,.
\end{equation}

\subsubsection*{b) Case $q< p <\infty $.} 
 H\"older's inequality with exponents $( p/(p-q),p/q)$ gives
\[
\int (S_q\bx)^{p-q}\bp^q\,\leq\,\Big[\int(S_q\bx)^p\Big]^{\frac {p-q}p}\, \Big[\int\bp^p\Big]^{\frac qp}\,=\,1.
\]
Then, from \eqref{2-2},
$$
|x_I|^{q-1}\,=\,\frac{|F_\bx(\be_I)|}{\int (S_q\bx)^{p-q}\bp^q}\,\geq  \, |F_\bx(\be_I)|
$$
Inserted into \eqref{dpcases} (and using $c=1$) it gives
\[
\dist(\bx,[\be_I])^p\,\leq \, 1\,-\,\big|F_{\bx}(\be_I)\big|^{1+\frac1{q-1}}\,=\,1\,-\,\big|F_{\bx}(\be_I)\big|^{q'}\,.
\]
Hence, a last use of Lemma \ref{1ua} 
 (with $\al= 1/p < 1$) implies
\begin{equation} \label{5-2}
\dist(\bx,[\be_I]) \,\leq \,1\,-\, p^{-1}\,\,\big|F_{\bx}(\be_I)\big|^{q'}\,.
\end{equation}
\end{proof}

\BR
Testing with explicit examples, it is possible to show that the power $s=\max\{p', q'\}$ in \eqref{2-1} cannot
be replaced by any smaller number. 
\ER

\subsection{Property $\At$}
\label{S_Atfpq}

Here, $\Sigma_N$ is the set of $N$-term combinations from the canonical basis of $\SX=\fpq(\sRd)$.
We define the exponent
\[
h=h(p,q;d)=(d-1)\Big(\frac1p-\frac1q\Big)_+.
\] 

\begin{lemma}
\label{L_A3fpq}
Let $1<p,q<\infty$ and $N\geq 1$. Then $\Sigma_N$ satisfies property $\At(r,V)$ with $V=c(\log N)^{h}$ 
and $r=1/p'$, for some constant $c=c(p,q,d)>0$. That is
\Be 
\sum_{I\in A}|x_I|\,\leq\,c\,(1+\log N)^{(d-1)(\frac1p-\frac1q)_+}\,|A|^{1/p'}\,\|\sum_{I\in B}x_I\be_I\|.
\label{A3fpq}
\Ee
for all sets $A\subset B\subset \sRd$ with $|A|\leq N$ and $B$ finite.
\end{lemma}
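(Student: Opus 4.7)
My plan is to prove the equivalent dual estimate given by Lemma~\ref{L_A3}, namely that for all finite $A\subset\sRd$ with $|A|\leq N$ and every unimodular $\bfe=(\e_I)$,
\[
\|\bbone^*_{\bfe A}\|_{(\fpq)^*}\,\leq\,c\,(1+\log N)^{(d-1)(1/p-1/q)_+}\,|A|^{1/p'}.
\]
The first step is to use the natural isometric embedding $T\colon\fpq\hookrightarrow L^p([0,1]^d;\,\ell^q(\sRd))$ defined by $T\bx=(x_I\bp)_I$. Since $(L^p(\ell^q))^*=L^{p'}(\ell^{q'})$, a Hahn-Banach extension of $\bbone^*_{\bfe A}\circ T^{-1}$ to all of $L^p(\ell^q)$ preserves the norm, and a direct computation shows the functional is realized by pairing with $G\in L^{p'}(\ell^{q'})$ given by $G_I=\e_I|I|^{-1/p'}\bbone_I$ for $I\in A$ and zero otherwise (one verifies $\int G_I\,\bp\,du=\e_I$ for each $I\in A$). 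This yields
\[
\|\bbone^*_{\bfe A}\|_{(\fpq)^*}\,\leq\,\|G\|_{L^{p'}(\ell^{q'})}\,=\,\Bigl\|\Bigl(\sum_{I\in A}|I|^{-q'/p'}\bbone_I\Bigr)^{1/q'}\Bigr\|_{L^{p'}}.
\]

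I would then split into two cases. When $p\geq q$ (so $p'/q'\leq 1$ and $h=0$), sub-additivity of $t\mapsto t^{p'/q'}$ gives the pointwise bound $(\sum|I|^{-q'/p'}\bbone_I)^{p'/q'}\leq\sum|I|^{-1}\bbone_I$, whose integral over $[0,1]^d$ equals $|A|$. Taking the $(1/p')$-th root yields the desired bound with no log factor. When $p<q$, set $r=p'/q'>1$ and $a_I=|I|^{-1/r}=|I|^{-q'/p'}$, so $a_I^r|I|=1$ and $\sum_{I\in A}a_I^r|I|=|A|$. The result then reduces to the key $L^r$-estimate for $L^r$-normalized indicators of dyadic rectangles:
\[
\int_{[0,1]^d}\Bigl(\sum_{I\in A}|I|^{-1/r}\bbone_I\Bigr)^r\,du\,\leq\,c_{r,d}\,(1+\log N)^{(d-1)(r-1)}\,|A|.
\]
After taking the $(1/p')$-th root and noting that $(d-1)(r-1)/p'=(d-1)(1/p-1/q)$, the argument is complete.

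The main obstacle is this $L^r$-estimate in dimension $d\geq 2$, where the $(d-1)$-log loss precisely reflects the failure of weak $(1,1)$ for the multi-parameter strong dyadic maximal operator. In 1D, no log loss occurs, essentially by Doob's inequality applied to the totally-ordered chain of intervals above each point. In higher dimensions the standard route is via the shape decomposition $A=\bigsqcup_{\vec j}A_{\vec j}$ into subfamilies of rectangles with common side-length exponents $\vec j=(j_1,\ldots,j_d)$: within each shape the rectangles are pairwise disjoint, so the partial sums $b_{\vec j}(u)=\sum_{I\in A_{\vec j}}|I|^{-1/r}\bbone_I(u)$ reduce to scaled indicators of $B_{\vec j}=\bigcup_{A_{\vec j}}I$. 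One then sums across shapes, either by induction on $d$ (iterating the 1D case one coordinate at a time) or by a multi-parameter distribution estimate for $\sum_{\vec j}b_{\vec j}$, producing exactly the $(d-1)$-log factor. The ``balanced'' configurations $A=\{I\in\sRd:j_1(I)+\cdots+j_d(I)=J\}$ saturate the exponent $(d-1)(r-1)$, confirming its sharpness.
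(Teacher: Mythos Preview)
Your reduction is correct and coincides with the paper's: both invoke Lemma~\ref{L_A3} and arrive at the same quantity $\big\|(\sum_{I\in A}|I|^{-q'/p'}\bbone_I)^{1/q'}\big\|_{L^{p'}}=\big\|\sum_{I\in A}\be_I\big\|_{\mathfrak f_{p',q'}}$. Your detour through the isometric embedding $\fpq\hookrightarrow L^p([0,1]^d;\ell^q)$ and the explicit representative $G$ is just a hands-on verification of the duality $(\fpq)^*=\mathfrak f_{p',q'}$ that the paper quotes directly; the case $p\geq q$ is then handled identically by sub-additivity.

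For $p<q$ your outline is on the right track but has a gap at the inductive step. The unweighted 1D bound $\int_0^1(\sum_{I\in A}|I|^{-1/r}\bbone_I)^r\,du\leq c_r\,|A|$ (which does follow from the nested/geometric-series argument you label ``Doob'') is \emph{not} enough to iterate. When you freeze $u'\in[0,1]^{d-1}$ and integrate in $u_1$, the inner sum carries the nonconstant coefficients $a_{I_1}(u')=\sum_{I':\,I_1\times I'\in A}|I'|^{-1/r}\bbone_{I'}(u')$, so one needs the \emph{weighted} 1D inequality
\[
\int_0^1\Big(\sum_{I_1\in A_1}a_{I_1}\,|I_1|^{-1/r}\bbone_{I_1}\Big)^r du_1\;\le\;c\,(1+\log|A_1|)^{\,r-1}\sum_{I_1\in A_1}a_{I_1}^r,
\]
and it is precisely here that each of the $d-1$ logarithmic factors enters. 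This is the content of the paper's Lemma~\ref{l_d1log}, proved via the Lorentz embedding $\ell^{p',q'}(\sD)\hookrightarrow\mathfrak f_{p',q'}(\sD)$ together with H\"older's inequality between $\ell^{p',1}$ and $\ell^{p',p'}$; see Appendix~1. Your ``shape decomposition'' is a legitimate alternative organization, but summing $\sum_{\vec j}b_{\vec j}$ in $L^r$ with the correct $(d-1)$-log loss still requires an estimate of this weighted type (or an equivalent multi-parameter distributional argument), so the missing ingredient is the same.
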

\begin{proof}
In view of Lemma \ref{L_A3} it suffices to estimate
\[
\sup_{|A|\leq N}\big\|\sum_{n\in A}\be_I\big\|_{(\fpq)^*}\;.
\]
Since $(\fpq)^*=\mathfrak{f}_{p',q'}$, then \eqref{A3fpq} reduces to show that
\Be
\sup_{|A|\leq N}\big\|\sum_{n\in A}\be_I\big\|_{\fpq}\,\leq\, c\, (1+\log N)^{(d-1)(\frac1q-\frac1p)_+}\,N^\frac1p.
\label{hr_fpq}
\Ee
When $q=2$ this was proved in \cite[Proposition 10]{Wo00}, see also \cite[Theorem A]{KPT06}.
A small modification of those proofs gives also the case $q\not=2$.
For completeness, we sketch the arguments in Appendix 1 below.
\end{proof}

\subsection{The WCGA in $\fpq$}

A direct application of Theorem \ref{th_Ds}, with the parameters obtained in Lemmas \ref{L_Dsfpq} and \ref{L_A3fpq}, gives
the following.

\begin{theorem}\label{th_fpq}
Let $\SX=\fpq(\sRd)$, $1<p, q< \infty$, and let $\cD$ be the canonical basis.
Define
\[
h(p,q;d)=\,(d-1)\Big(\frac1p-\frac1q\Big)_+,\mand \al(p,q)=\left\{
\Ba{lll}
1 & {\rm if} &  p\leq q\\
q'/p'& {\rm if} &   q\leq p.
\Ea
\right.
\]
Then,  the WCGA satisfies \eqref{oldphi}
with
\Be
\oldphi(N)\,=\,\big\lfloor c\, (1+\log N)^{p'h(p,q;d)}\, N^{\al(p,q)}\big\rfloor,
\label{phi_fpq}
\Ee
for some $c=c(p,q,d,\tau)>0$.
\end{theorem}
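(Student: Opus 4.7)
The proof will be a direct application of Theorem~\ref{th_Ds}, so my plan is to verify its three hypotheses for $(\SX,\|\cdot\|_{\fpq},\cD)$ and then substitute the resulting parameters into formula \eqref{phiUVs}, keeping track of the two regimes $p\leq q$ and $q\leq p$ separately at the very end.

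For hypothesis (i) I would invoke Lemma~\ref{L_Dsfpq}, which yields property $\D(s,c_1)$ with $s=\max\{p',q'\}$ and $c_1=\min\{1/p,1/q\}$. For hypothesis (ii) I would invoke Lemma~\ref{L_A3fpq}, which gives property $\At(r,V)$ for $\Sigma_N$ with $r=1/p'$ and $V=c(1+\log N)^{h(p,q;d)}$. For hypothesis (iii) I would observe that the canonical basis of $\fpq$ is $1$-unconditional: if $A\subset B$ and $\bx=\sum_{I\in B}a_I\be_I$, then pointwise $S_q(\bx_A)\leq S_q(\bx)$ (since $S_q^q$ is just a sum of nonnegative terms indexed by $I$), hence $\|\bx_A\|_{\fpq}\leq \|\bx\|_{\fpq}$. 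Thus $\Sigma_N$ enjoys $\Au(U)$ with $U=1$, and the factor $\log(U+1)$ in \eqref{phiUVs} is merely a universal constant.

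Inserting these parameters into \eqref{phiUVs} yields
\[
\oldphi(N)\;\asymp\;V^{s}\,N^{rs}\;=\;c^{s}\,(1+\log N)^{sh(p,q;d)}\,N^{s/p'}.
\]
It remains to translate $s$ and $s/p'$ into the notation of the theorem. When $p\leq q$ one has $p'\geq q'$, so $s=p'$; consequently $s/p'=1=\al(p,q)$ and $sh(p,q;d)=p'h(p,q;d)$, exactly as claimed. When $q\leq p$ one has $s=q'$; consequently $s/p'=q'/p'=\al(p,q)$, while $h(p,q;d)=(d-1)(1/p-1/q)_{+}=0$, so the logarithmic factor is trivially $1=(1+\log N)^{p'h(p,q;d)}$. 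In both regimes we recover \eqref{phi_fpq}.

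I do not anticipate any real obstacle in this step: all the serious work has already been carried out in Lemmas~\ref{L_Dsfpq} and~\ref{L_A3fpq} (the only mildly delicate point being that the value $s=\max\{p',q'\}$ supplied by property $\D$ is genuinely smaller than the $q'=2$ one would have obtained from the modulus of smoothness alone, and it is precisely this improvement that makes the exponent $\al(p,q)$ come out sharp when $q\leq p$). Theorem~\ref{th_fpq} is then essentially a bookkeeping consequence of Theorem~\ref{th_Ds}.
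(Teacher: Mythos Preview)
Your proposal is correct and follows exactly the paper's approach: the paper states Theorem~\ref{th_fpq} as ``a direct application of Theorem~\ref{th_Ds}, with the parameters obtained in Lemmas~\ref{L_Dsfpq} and~\ref{L_A3fpq}'', and you have simply spelled out the implicit bookkeeping (including the observation $U=1$ from $1$-unconditionality and the case split $p\leq q$ versus $q\leq p$ for the exponent $s/p'$). The only quibble is the parenthetical phrase ``the $q'=2$ one would have obtained from the modulus of smoothness alone'': here your symbol $q'$ collides with the $q$ of $\fpq$, and what you mean is that the smoothness exponent would give $s=(\min\{p,q,2\})'=\max\{p',q',2\}$, which is strictly larger than $\max\{p',q'\}$ precisely when $\min\{p,q\}>2$ --- but this does not affect the proof itself.
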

\BR
As in Remark \ref{R_lpq} above, if in the case $\min\{p,q\}>2$ 
we had used Theorem \ref{th_tem14} rather than Theorem \ref{th_Ds},
then one would have obtained a power $2/p'$, which is worse than $\al(p,q)$
(assuming that the estimate for the modulus of smoothness in Lemma \ref{Lpq1} also applies when $\SX=\fpq$).
\ER

\subsection{A lower bound for the WCGA in $\fpq$ when $p\leq q$}

We show in this subsection that, when $p\leq q$, the function $\oldphi(N)=\lfloor c\, (1+\log N)^{p'(d-1)(\frac1p-\frac1q)}\, N\rfloor$ in \eqref{phi_fpq} cannot be replaced by a slower growing one.

\begin{theorem} \label{th_fpq2}
In the conditions of Theorem \ref{th_fpq}, suppose that $p\leq q$ and that
\Be
\big\|\bx-\G_{\psi(N)}\bx\big\|\leq \,C\,\sigma_N(\bx),\quad\forall\;N\geq1,\;\; \bx\in\fpq(\sRd).
\label{Gpsi_fpq}
\Ee
Then $\psi(N)\geq\,c'\,(1+\log N)^{p'(d-1)(\frac1p-\frac1q)}\, N$, for some $c'>0$.
\end{theorem}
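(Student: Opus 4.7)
The plan is to follow the adversarial construction strategy of Proposition \ref{P_psi}, but replacing the simple product example used there with a Cantor-type family of $d$-dimensional dyadic rectangles that saturates the democracy bound of Lemma \ref{L_A3fpq}. For each large $N$, I will exhibit an element $\bx \in \fpq(\sRd)$ for which WCGA (with a valid weak-greedy choice sequence) requires at least $c\,N(1+\log N)^{(d-1)(1/p-1/q)p'}$ iterations to reach error $\leq C\,\sigma_N(\bx)$. As in the proof of Proposition \ref{P_psi}, I may assume $\tau = 1$ and $C = 1$.

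First, I will fix $n \asymp \log N$ and let $A\subset\sRd$ be a disjoint union of $k \asymp N/n^{d-1}$ ``corner'' Cantor families of dyadic rectangles at level $n$, each consisting of $\asymp n^{d-1}$ rectangles meeting at a common dyadic corner, and placed in pairwise disjoint dyadic sub-cubes of $[0,1]^d$. By the Besicovitch-type constructions in \cite{Wo00, KPT06}, such $A$ saturates Lemma \ref{L_A3fpq} and has $|A| \asymp N(\log N)^{(d-1)(1/p-1/q)p'}$. I will also choose a set $B \subset \sRd$ of $N$ pairwise disjoint rectangles at a much finer dyadic level whose underlying supports are disjoint from $\bigcup_{I\in A}I$, and consider
\[
\bx \,:=\, \sum_{I\in A}\be_I \;+\; \lambda \sum_{J\in B}\be_J,
\]
where $\lambda>0$ is a constant depending only on $(p,q,d)$.

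The calibration step will use Corollary \ref{c_Ffpq}: since $A$ and $B$ occupy disjoint regions of $[0,1]^d$, the function $S_q \bx$ splits as a sum over those two regions, and a direct computation gives $|F_\bx(\be_J)| = \lambda^{p-1}/\|\bx\|^{p-1}$ for $J \in B$, while $|F_\bx(\be_I)| \asymp 1/\|\bx\|^{p-1}$ for $I \in A$, the implicit constants in the latter depending only on the Cantor overlap pattern. Choosing $\lambda$ so that these two expressions match at step $0$, WCGA will be allowed (through a tie-breaking choice with $\tau=1$) to pick an element of $A$ at each step; an inductive argument using Lemma \ref{L_Fz} and the persistence of the disjoint-support structure will show that the choice remains valid throughout the first $|A|$ iterations. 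Consequently, after $|A|$ steps the residual equals $\lambda \sum_{J\in B}\be_J$, of norm $\lambda N^{1/p}$. Meanwhile, $\lambda \sum_{J\in B}\be_J \in \Sigma_N$ is an admissible $N$-term approximant of $\bx$, so $\sigma_N(\bx) \leq \|\sum_{I\in A}\be_I\|_{\fpq}$. An application of \eqref{Gpsi_fpq} at $\psi(N)<|A|$ would then force $\lambda N^{1/p} \leq C\,\|\sum_{I\in A}\be_I\|_{\fpq}$, and the conclusion $\psi(N) \geq c|A|$ will follow once one knows $\|\sum_{I\in A}\be_I\|_{\fpq} \lesssim N^{1/p}$.

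The hardest part will be establishing this last primal bound. The duality pairing $|A| \leq \|\sum_A \be_I\|_{\fpq}\,\|\sum_A \be_I\|_{(\fpq)^*}$ together with the dual saturation from Step 1 gives the matching lower bound $\|\sum_A\be_I\|_{\fpq}\gtrsim N^{1/p}$, so the expected order is correct. The upper bound reduces to estimating $\int (S_q\sum_A \be_I)^p = \sum_k k^{p/q}\,2^n\,\mu_k$, where $\mu_k$ denotes the measure of the set where exactly $k$ rectangles of $A$ overlap. For the corner Cantor geometry in dimension $d$ one verifies $\mu_k \asymp (n+k)^{d-1}\,2^{-(n+k)}$ per individual family, and a H\"older-type argument analogous to that in the proof of Lemma \ref{L_A3fpq} (applied on the primal side) will yield the required estimate. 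A secondary delicate point will be to maintain the inductive calibration $|F_{\bx_m}(\be_I)| \approx |F_{\bx_m}(\be_J)|$ throughout all $|A|$ iterations, which requires showing that the successive $\fpq$-Chebyshev projections onto initial subsets of $\{\be_I : I \in A\}$ are sufficiently well-behaved with respect to the Cantor overlap pattern; this can be handled by comparing the $\fpq$-norm of the residual with a suitable truncated approximation, exploiting the fact that removing rectangles from a Cantor family preserves the qualitative overlap structure.
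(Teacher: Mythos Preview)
Your construction has the roles of the two sets reversed, and this is fatal when $p\le q$.

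You place the overlapping ``Cantor'' structure in the large set $A$ (the one WCGA is supposed to waste time on) and make the small $N$-element set $B$ disjoint. But Corollary~\ref{c_Ffpq} shows that for $p\le q$ overlap \emph{decreases} the norming functional: if on $I$ one has $S_q\bx\gtrsim\nu^{1/q}|I|^{-1/p}$ (overlap of order~$\nu$), then
\[
|F_\bx(\be_I)|\;\approx\;\frac{\nu^{(p-q)/q}}{\|\bx\|^{p-1}}\;\ll\;\frac{1}{\|\bx\|^{p-1}}\,.
\]
Hence, for WCGA to select elements of $A$ rather than of the disjoint set $B$, the calibration forces $\la^{p-1}\lesssim\nu^{(p-q)/q}$; in particular $\la$ is \emph{not} a constant but $\la\asymp\nu^{-(q-p)/(q(p-1))}$. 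Carrying this through the contradiction step, one finds (writing $\|\bbone_A\|_{\fpq}\approx|A|^{1/p}\nu^{1/q-1/p}$ for a set with uniform overlap $\nu$) that the inequality $\la N^{1/p}>C\|\bbone_A\|_{\fpq}$ becomes
\[
|A|\;<\;c\,N\,\nu^{-\frac{q-p}{q(p-1)}}\;\le\;c\,N,
\]
so your argument can never give more than the trivial $\psi(N)\gtrsim N$, no matter how $A$ is chosen. (Your proposal also contains an internal inconsistency: $k\asymp N/n^{d-1}$ families with $\asymp n^{d-1}$ elements each gives $|A|\asymp N$, not $|A|\asymp N(\log N)^{(d-1)(1/p-1/q)p'}$; and a single ``corner'' family at level $n$ does not saturate the dual democracy bound of Lemma~\ref{L_A3fpq}.)

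The paper's construction does the opposite: the \emph{small} set $A_n$ (with $|A_n|\le N$) is the one carrying the overlap---it consists of \emph{all} dyadic rectangles of volume $2^{-n}$ in $[0,1/2]^d$, so $S_q$ is constant there---while WCGA is steered onto a \emph{large} tiling set $B_{\bm}$ of pairwise disjoint rectangles, of cardinality $\ge\psi(N)$. Because removing disjoint elements from $B_{\bm}$ leaves both the $A_n$-functionals and the remaining $B_{\bm}$-functionals unchanged, the calibration $b^{p-1}\approx a^{p-1}n^{(d-1)(p-q)/q}$ (so $b<a$) persists through all $\psi(N)$ iterations without any delicate inductive step. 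The Lebesgue inequality then reads $a\,n^{(d-1)/q}2^{n/p}\lesssim b\,2^{m/p}$, and now the factor $(a/b)^p=\nu^{(q-p)p/(q(p-1))}$ works \emph{for} you, yielding $\psi(N)\gtrsim N(\log N)^{(d-1)(1/p-1/q)p'}$.
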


\begin{proof}
Fix $n\in\SN$ and $\bm=(m_1,\ldots, m_d)\in\SN^d$, and let $m:=m_1+\ldots+m_d$.
Consider the following sets
\[
A_n=\Big\{I\in\sRd\mid I\subset[0,1/2]^d\mand |I|=2^{-n}\Big\}
\]
and
\[
B_{\bm}=\Big\{I_1\times\ldots\times I_d \in\sRd\mid I_i\subset[1/2,1]\mand |I_i|=2^{-m_i},\;i=1,\ldots,d\Big\}.
\]
Observe that
\Be
|A_n|\,\approx \, n^{d-1}\,2^n
\mand 
\Big(\sum_{I\in A_n}\bp^q(u)\Big)^{1/q}\approx n^{\frac{d-1}q}\,2^{n/p}, \quad u\in (0,1/2)^d,
\label{An1}
\Ee
while
\Be
|B_\bm|\,=\,2^{m-1}
\mand 
\Big(\sum_{I\in B_\bm}\bp^q(u)\Big)^{1/q}\approx \,2^{m/p}, \quad u\in (1/2,1)^d,
\label{Bm1}
\Ee
We build a vector 
\[
\bx=\bx_{A_n}+\bx_{B_\bm}\,:=\,a \sum_{I\in A_n}\be_I\,+\,b \sum_{I\in B_\bm}\be_I,
\] 
for suitable $a,b>0$ to be chosen later. Using the expression for the $\fpq$-norming functionals
in \eqref{2-2}, and the estimates in \eqref{An1}, \eqref{Bm1}, one sees that
\Be
\|\bx\|^{p-1}\,\big|F_\bx(\be_I)\big|=|x_I|^{q-1}\,\int_I (S_q\bx)^{p-q}\,\bp^q\,\approx\,
\left\{\Ba{lll}
a^{p-1}n^{\frac{(d-1)(p-q)}q}, & {\rm if} & I\in A_n\\
b^{p-1}, & {\rm if} & I\in B_\bm.\\
\Ea\right.
\label{FAnBm}
\Ee  
We  now pick $b$ such that
\Be
b^{p-1}\,\approx\,a^{p-1}\,n^{(d-1)(p-q)/q}.
\label{abp}
\Ee
The constants can be adjusted so that the WCGA always chooses
\[
\G_k(\bx)=b\sum_{j=1}^k\be_{I_j},\quad \mbox{when}\quad k\leq |B_\bm|,
\]
for some enumeration $I_1,I_2,\ldots$ of $B_\bm$.
At this point one should notice that \eqref{FAnBm}
continues to hold with $\bx$ replaced by each remainder $\bx-\G_k(\bx)$, as long as 
$k<|B_\bm|$ (so one can still pick elements $I$ from $B_\bm\setminus\{I_1,\ldots, I_k\}$).

Finally, given $N\gg 1$, we select the largest $n$ and the smallest $m$ such that
\Be
|A_n|\leq N\mand |B_\bm|\geq \psi(N).
\label{AnN}
\Ee
Then, $N\approx n^{d-1}2^n$ and $\psi(N)\approx 2^m$. The assumption in \eqref{Gpsi_fpq} gives
\[
\|\bx_{A_n}\|\,\leq\,\|x-\G_{\psi(N)}(\bx)\|\,\leq \,C\,\sigma_N(\bx)\,\leq\,C\,\|\bx_{B_\bm}\|.
\]
By \eqref{An1} and \eqref{Bm1} this implies
\[
 a\,n^{\frac{d-1}q}\,2^{n/p}\approx \|\bx_{A_n}\|\leq\,C\,\|\bx_{B_\bm}\|\,\approx\, b\,2^{m/p}\,\approx\,b\,\psi(N)^{1/p}.
\]
Recalling  \eqref{abp} this amounts to
\[
\psi(N)\,\gtrsim \,(a/b)^p\,n^{\frac{(d-1)p}q}\, 2^n\,
\approx\,n^{\frac{(d-1)p'}{q'}}\, 2^n\,\approx\,[\log N]^{(d-1)(\frac{p'}{q'}-1)}\, N,
\]
using in the last step the relation  $N\approx n^{d-1}2^n$.
This proves the theorem since the exponent in the power of the log can also be written as $p'/q'-1=p'(\frac1p-\frac1q)$.

\end{proof}

\section{The WCGA for the $d$-variate Haar basis in $L^p$}
\label{S_haar}
\setcounter{equation}{0}\setcounter{footnote}{0}
\setcounter{figure}{0}

We recall the definition of the Haar system. Consider the 1-dimensional functions $h_0=\bbone_{[0,1)}$
and $h=\bbone_{[0,1/2)}-\bbone_{[1/2,1)}$. For every dyadic interval $I=2^{-j}(k+[0,1))$ in $[0,1]$, define
\[
h_{I,p}(x)=2^{j/p}h(2^jx-k) ,\mand h_{0,p}(x)= h_0(x).
\]
Then, if $1<p<\infty$, the system $\cH_p=\{h_0, h_{I,p}\}_{I\in \sD}$ is a (normalized) unconditional basis of $L^p[0,1]$.
With a slight abuse of notation we write $\cH_p=\{h_{I,p}\}_{I\in\bD}$ with $\bD=\sD\cup\{0\}$.
The $d$-variate Haar system is then \[
\cH^d_p=\cH_p\times\ldots\times\cH_p=\{H_{I,p}\}_{I\in\bRd},
\]
with $\bRd=\bD\times\ldots\times\bD$, that is,
\[
H_{I,p}(x_1,\ldots,x_d)=\prod_{i=1}^d h_{I_i,p}(x_i), \quad \mbox{if}\quad I=I_1\times\ldots\times I_d\in \bRd.
\]
The system $\cD=\cH^d_p$ is an unconditional basis of $L^p([0,1]^d)$, and moreover, for every function 
$f=\sum_{I} c_{I}(f)H_{I,p}\in L^p$, the expression
\[
\tri{f}:=\big\|S(f)\big\|_{L^p}, \quad \mbox{where}\quad S(f)=\Big(\sum_{I\in\bRd}|c_{I}(f)H_{I,p}|^2\Big)^\frac12,
\]
defines an equivalent norm in $L^p([0,1]^d)$, provided $1<p<\infty$.
Here, the coefficients are given by
\[
c_I(f):=\lan{f}{H_{I,p'}}, \quad I\in\bRd.
\]
Clearly, $(L^p,\stri{\cdot})$ is related with the spaces $\fpt$ from the last section.
More precisely, the coefficient map defines an isometric isomorphism 
\begin{align*}
\big(L^p([0,1]^d),\stri{\cdot &}\big)  \longrightarrow\; \fpt(\bRd)\\
f & \longmapsto \quad \big(c_I(f)\big)_{I\in\bRd}
\end{align*}
with the latter space defined as in \S\ref{S_fpq}, with the only minor change that $\sRd$ is replaced by $\bRd$
(so a few additional terms appear in the norm \eqref{norm_fpq}).
This change does not affect the proofs, and all the results in \S\ref{S_fpq} continue to hold for the spaces $\fpq(\bRd)$.
In particular, Theorem \ref{th_haar} from the introduction becomes a corollary of Theorems \ref{th_fpq} and \ref{th_fpq2},
in the special case $q=2$.

\BR\label{R_TL}
One can use the previous isometric isomorphism to transfer the results for $\SX=\fpq$ in \S\ref{S_fpq}
into results for (univariate) wavelet bases $\Psi=\{\psi_I\}$ in the class of 
Triebel-Lizorkin spaces $F^r_{p,q}$. Indeed, one can define equivalent norms in the latter of the form
\[
\|f\|_{F^r_{p,q}}:=\Big\|\big(\sum_I\, |c_I(f)\bp|^q\big)^\frac1q\Big\|_{L^p},
\]
where $c_I(f)=\lan f{\psi^*_I}=|I|^{-r}\,\lan f{\psi_{I,p'}}$, with $\psi_{I,p'}=\psi_I/\|\psi_I\|_{p'}$. Just remark that in \S\ref{S_fpq} one should modify the index set $\sD$ 
acccording to the underlying space (say, over $\SR$ rather than $[0,1]$). In the $d$-variate case,
the use of tensor wavelet bases, gives rise to an isometry between $\fpq(\sRd)$ and Triebel-Lizorkin classes with dominating mixed smoothness
(sometimes denoted $S^r_{p,q}F$; see e.g. \cite[Th. 1.12]{Tri2019}). 
\ER

\BR\label{R_Besov}
In a similar fashion, one may transfer the results for $\SX=\ell^p(\ell^q)$ in \S\ref{S_lpq} 
into results for wavelet bases in the class of Besov spaces $B^r_{q,p}$
(or $S^r_{q,p}B$ in the $d$-variate case), using the equivalent norms
\[
\|f\|_{B^r_{q,p}}:=\Big[\sum_{j}\Big(\sum_{k}\, |c_{I(j,k)}(f)|^q\Big)^\frac pq\Big]^{1/p},
\]
with $I(j,k)=2^{-j}(k+[0,1))$, $j,k\in\SZ$. 
\ER

\section{Property $\At$ for classes of bases in $L^p$}
\label{S_Nik}
\setcounter{equation}{0}\setcounter{footnote}{0}
\setcounter{figure}{0}

In this section we find optimal exponents for the property $\At$ in certain classes of bases 
of $L^p$, $1<p<\infty$. These classes (greedy, almost greedy, quasi-greedy,...) behave well with respect to the TGA,
and we study the performance with respect to the WCGA. 
Results of this section complement the corresponding results from \cite{Tem14} (see also \cite{Tem18}, Section 8.7.4).

 Below we shall use the following known lemma, which is valid for $L^p$ over an arbitrary ($\sigma$-finite) measure space. We denote by
$L^p_\SK$ the subspace of $\SK$-valued functions in $L^p$.

\begin{lemma}\label{L_rhoC}
Let $1<p<\infty$, and let $\SX=L^p=L^p_\SC$. Then, there exists $\ga=\ga(p)>0$ such that 
\[
\rho_{L^p}(t):=\sup_{\|f\|_p=\|g\|_p=1}\Big(\|f+tg\|_p+\|f-tg\|_p-2\|f\|_p\Big)/2\;\leq\,\ga\, t^q,\quad \forall\, t>0,
\]
with $q=\min\{p,2\}$.
\end{lemma}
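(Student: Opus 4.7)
The claim is a classical one; the cleanest plan is to split at $p=2$ and reduce each case to a pointwise inequality on $\SC$ that is then integrated.

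\textbf{Case $1<p\leq 2$:} I would start from the parallelogram identity $|\alpha+\beta|^2+|\alpha-\beta|^2=2(|\alpha|^2+|\beta|^2)$ valid in $\SC$, and combine Jensen's inequality for the concave function $x\mapsto x^{p/2}$ on $[0,\infty)$ with its subadditivity (both consequences of $p/2\leq 1$) to get the pointwise bound
\[
|\alpha+\beta|^p+|\alpha-\beta|^p\,\leq\, 2(|\alpha|^2+|\beta|^2)^{p/2}\,\leq\, 2(|\alpha|^p+|\beta|^p),\quad \alpha,\beta\in\SC.
\]
Integrating with $\alpha=f(x)$, $\beta=tg(x)$, where $\|f\|_p=\|g\|_p=1$, gives $\|f+tg\|_p^p+\|f-tg\|_p^p\leq 2(1+t^p)$. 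Then concavity of $x\mapsto x^{1/p}$ (Jensen) together with the elementary bound $(1+x)^{1/p}\leq 1+x/p$ (for $x\geq 0$, $p\geq 1$) yields
\[
\frac{\|f+tg\|_p+\|f-tg\|_p}{2}-1\,\leq\, (1+t^p)^{1/p}-1\,\leq\, \frac{t^p}{p}.
\]
Hence $\rho_{L^p}(t)\leq t^p/p$, giving the asserted bound with $q=p$ and $\gamma=1/p$.

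\textbf{Case $p\geq 2$:} Here the Clarkson-type bound $|\alpha+\beta|^p+|\alpha-\beta|^p\leq 2^{p-1}(|\alpha|^p+|\beta|^p)$ is too crude (it only delivers $\rho_{L^p}(t)=O(t^{p'})$). The right tool is the sharp \emph{Ball--Carlen--Lieb} pointwise inequality,
\[
|\alpha+\beta|^p+|\alpha-\beta|^p\,\leq\, 2\bigl(|\alpha|^2+(p-1)|\beta|^2\bigr)^{p/2},\quad \alpha,\beta\in\SC,
\]
which I would invoke from the literature (Ball--Carlen--Lieb, \emph{Invent.\ Math.} 115 (1994)). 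Integrating and applying Minkowski's inequality in $L^{p/2}(\Omega)$ (valid since $p/2\geq 1$) produces
\[
\|f+g\|_p^p+\|f-g\|_p^p\,\leq\, 2\bigl(\|f\|_p^2+(p-1)\|g\|_p^2\bigr)^{p/2}.
\]
Specializing to $\|f\|_p=\|g\|_p=1$, replacing $g$ by $tg$, and using once more the concavity of $x\mapsto x^{1/p}$ together with $\sqrt{1+y}\leq 1+y/2$, one obtains
\[
\frac{\|f+tg\|_p+\|f-tg\|_p}{2}\,\leq\, \sqrt{1+(p-1)t^2}\,\leq\, 1+\frac{(p-1)t^2}{2},
\]
so that $\rho_{L^p}(t)\leq (p-1)t^2/2$, with $q=2$ and $\gamma=(p-1)/2$. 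Taking the two cases together and absorbing constants into a single $\gamma=\gamma(p)$ proves the lemma.

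\textbf{Main obstacle:} The delicate step is the complex Ball--Carlen--Lieb inequality for $p\geq 2$. It is a sharp two-dimensional (over $\SC\cong\SR^2$) two-point inequality that cannot be deduced from one-dimensional arguments, and it is precisely what makes the move from the real to the complex case nontrivial. In a self-contained proof one would replace this by a direct verification that $\|\cdot\|_p^2$ is 2-uniformly smooth in $L^p_\SC$, via a pointwise Taylor-expansion estimate for $|\alpha+t\beta|^p$ around $t=0$; however, for the purposes of this plan I would simply cite the BCL inequality.
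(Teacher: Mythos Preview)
Your argument is correct and yields explicit constants ($\gamma=1/p$ for $1<p\le 2$, $\gamma=(p-1)/2$ for $p\ge 2$), but it takes a genuinely different route from the paper. The paper does not compute anything: it cites the real-valued case from Lindenstrauss--Tzafriri and then observes that $\rho_{L^p_\SC}(t)=\rho_{L^p_\SR}(t)$ because $L^p_\SC$ embeds isometrically into an $L^p_\SR$-space (via the Gaussian embedding of $(\SC,|\cdot|)\cong\ell^2_2\hookrightarrow L^p_\SR$, applied fiberwise), citing \cite[Example 4.6]{DKSTW}. So the paper's proof is a soft one-line reduction to the literature, whereas yours is a direct, self-contained derivation from pointwise two-point inequalities; yours buys sharp constants, the paper's buys brevity.

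One small correction to your ``main obstacle'' paragraph: the Ball--Carlen--Lieb inequality is stated and proved for arbitrary Hilbert spaces, and $(\SC,|\cdot|)$ is simply the two-dimensional real Hilbert space, so the complex scalar field causes no additional work at that step (this is in contrast to Hanner's inequalities, where the passage to $\SC$ is genuinely delicate). The paper's isometric-embedding trick is another way of seeing that the complex case is no harder than the real one.
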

\begin{proof}
This result is well-known when $\SX=L^p_\SR$; see e.g. \cite[Vol II, p. 63]{LT}. We remark that in the complex-valued case one has
\[
\rho_{L^p_\SC}(t)\,=\,\rho_{L^p_\SR}(t)\,\leq\,\ga\, t^q,\quad \forall\,t>0\;,
\]
since $L^p_\SC$ can be isometrically embedded into some $L^p_\SR$; see \cite[Example 4.6]{DKSTW}.
\end{proof}

\subsection{Greedy bases}

The univariate Haar system $\cH_p$ is a greedy basis of $L^p[0,1]$, $1<p<\infty$, and it was shown in \cite[Example 4]{Tem14}
that a Lebesgue inequality holds with $\oldphi(N)=O(N)$ if $1<p\leq 2$, and  $\oldphi(N)=O(N^{2/p'})$ if $2\leq p<\infty$. 
Here we generalize this result to any greedy basis $\Psi=\{\psi_n\}_{n\geq1}$ of $L^p$,
as stated in Theorem \ref{th_greedy} from \S1.

%

\

We begin with a computation of the parameters for the $\At$ property.
We assume that $\Psi$ is normalized in $L^p$, and recall that a greedy basis is unconditional and democratic.

\begin{lemma}\label{L_A3Lp}
If $\Psi$ is a greedy basis in $L^p$, then $\At(r,V)$ holds with $r=1/p'$ and some constant $V>0$.  
\end{lemma}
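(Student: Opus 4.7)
The plan is to reduce the claim, via Lemma \ref{L_A3}, to the dual estimate $\|\bbone^*_{\bfe A}\|_{L^{p'}} \leq V |A|^{1/p'}$, valid for every finite $A\subset\SN$ and every sign sequence $\bfe$, and then to obtain this from the classical fact that the fundamental function of a greedy basis of $L^p$ is of order $N^{1/p}$. More precisely, we invoke the well-known result (see e.g.\ \cite[Ch.1]{Tem11}) that for any normalized greedy basis $\Psi$ of $L^p$, $1<p<\infty$, there are constants $c,C>0$ with $c\,N^{1/p} \leq \|\bbone_{\bfe A}\|_{L^p} \leq C\,N^{1/p}$ whenever $|A|=N$; only the lower estimate will actually be needed below.

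From this, we derive the standard coefficient decay: for $f \in L^p$ with $\|f\|_p\leq 1$, the decreasing rearrangement $c^*_n(f)$ of $\{|c_k(f)|\}_{k\geq 1}$ satisfies $c^*_n(f) \leq C_1\, n^{-1/p}$. To prove this, fix $n$, pick an $n$-element set $J\subset\SN$ with $|c_i(f)|\geq c^*_n(f)$ for all $i\in J$, and combine suppression unconditionality (constant $K$) with the comparison principle for unconditional bases to obtain
\[
c^*_n(f)\,\Big\|\sum_{i\in J}\lsgn(c_i(f))\,\psi_i\Big\|_p \,\leq\, K\,\Big\|\sum_{i\in J} c_i(f)\,\psi_i\Big\|_p \,\leq\, K^2\,\|f\|_p\,;
\]
the lower democracy bound from Step 1 then finishes the estimate. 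Assembling the pieces, for any unit $f\in L^p$ and $N=|A|$,
\[
\big|\bbone^*_{\bfe A}(f)\big| \,\leq\, \sum_{i\in A}|c_i(f)| \,\leq\, \sum_{n=1}^{N} c^*_n(f) \,\leq\, C_1\sum_{n=1}^{N}n^{-1/p} \,\leq\, C_2\,N^{1/p'}\,,
\]
since $1/p<1$. Taking the supremum over unit $f$ yields the target dual bound and hence $\At(1/p',V)$.

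The main obstacle is the fact cited from \cite{Tem11}: for $1<p<2$ the lower bound $\|\bbone_A\|_p \gtrsim N^{1/p}$ cannot be extracted from type/cotype of $L^p$ alone (which yield only $N^{1/2}$), and requires finer structural facts combined with democracy of $\Psi$; we shall simply invoke this classical result rather than reprove it.
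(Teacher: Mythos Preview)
Your proof is correct but follows a different route from the paper's. Both arguments reduce, via Lemma~\ref{L_A3}, to the dual estimate $\|\bbone^*_{\bfe A}\|_{L^{p'}}\lesssim |A|^{1/p'}$, and both ultimately rest on the same structural fact (Kadec--Pe\l czy\'nski combined with democracy) that an unconditional democratic basis of $L^q$ has fundamental function $\asymp N^{1/q}$. The paper, however, applies this fact to the \emph{dual} system: it invokes the duality theorem of \cite{DKKT} (a greedy basis of $L^p$ has a greedy dual in $L^{p'}$), and then the upper democracy bound for $\Psi^*$ in $L^{p'}$ yields $\|\bbone^*_A\|_{p'}\lesssim |A|^{1/p'}$ in one line. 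You instead stay on the primal side, using only the \emph{lower} democracy bound $\|\bbone_{\bfe A}\|_p\gtrsim |A|^{1/p}$ for $\Psi$, deduce the weak-$\ell^p$ decay $c^*_n(f)\lesssim n^{-1/p}$ via unconditionality, and sum. Your approach thus avoids the duality theorem entirely at the cost of the extra coefficient-decay and summation step; the paper's proof is shorter once \cite{DKKT} is taken for granted, while yours is more self-contained.
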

\begin{proof}
Let $\Psi^*=\{\psi^*_n\}_{n=1}^\infty$ be the dual system to $\Psi$ in $L^{p'}$.
In view of Lemma \ref{L_A3} it suffices to show that
\Be
\big\|\sum_{n\in \La}\psi^*_n\big\|_{L^{p'}} \,\leq \, C\, |\La|^{1/p'},
\label{A3Lp}
\Ee
for all finite sets $\La\subset\SN$. We shall use a result from functional analysis \cite[Theorem 4b]{KP} which says that for any
(seminormalized) unconditional basis $\cB=\{b_k\}_{k\geq1}$ of $L^q$, $1<q<\infty$, there is a
subsequence $k_j$,
$j=1,2,\dots$, such that
$$
\big\|\sum_{j=1}^\infty \alpha_{k_j} b_{k_j}\big\|_{L^q}\asymp
\Big(\sum_{j=1}^\infty|\alpha_{k_j}|^q\Big)^\frac1q.
$$
In particular, if $\cB$ is unconditional and democratic, then necessarily
\begin{equation}\label{gb2}
\|\sum_{k\in \La}b_k\|_{q}\asymp |\La|^{1/q},
\end{equation}
with  the constants of equivalency depending at most on $\cB$ and $q$.
Now, if $\Psi$ is a greedy basis in $L^p$, $1<p<\infty$, then it was shown in \cite[Theorem 5.1]{DKKT} that
$\Psi^*$ is also a greedy basis in the dual space $(L^p)^*=L^{p'}$.  Therefore \eqref{A3Lp} is just a consequence of \eqref{gb2} with $q=p'$.
 \end{proof}

\Proofof{Theorem \ref{th_greedy}}
The proof is now a direct application of Theorem \ref{th_tem14}. Indeed, $\Au(U)$ holds because $\Psi$ is unconditional,
while $\At(r=1/p',V)$ was proved in the previous lemma.   By Lemma \ref{L_rhoC} we also have
$\rho(t)\leq \ga t^q$ with $q=\min\{p,2\}$. Therefore, Theorem \ref{th_tem14}  implies
\begin{equation}\label{gb4}
\|f-\G_{C(t,p,\Psi)m^{\al(p)}}(f)\|_p \le C\sigma_m(f,\Psi)_p, 
\end{equation}
 with $\al(p)=rq'$, which agrees with \eqref{alp}.
\ProofEnd

%
%

\subsection{Tensor products of greedy bases}
 We now extend the first part of Theorem \ref{th_haar} from the multivariate Haar system to
an arbitrary tensor product of univariate greedy bases.
  Let $\Psi$ be a normalized basis in $L^p([0,1))$. In the space $L^p([0,1)^d)$ we define
	the system
\[
\Psi^d:=\Psi\times\cdots\times\Psi=\Big\{\psi_{\bn}(x)=\psi_{n_1}(x_1)\cdots\psi_{n_d}(x_d)\mid \bn=(n_1,\dots,n_d)\in\SN^d\Big\}.
\]
Clearly, if $\Psi$ is unconditional, so is $\Psi^d$.
Democracy, however, does not transfer, but one has the following result from \cite{KPT06} (see also \cite[Ch. 8]{Tem18}). 
\begin{proposition}\label{gbT2} Let $1<q<\infty$ and let $\Psi$ be a greedy basis for $L^q[0,1]$. Then for any $\La$ with $|\La|=m\geq 2$, we have:
 for
$2\le q<\infty$
$$
C^1_{q,d} \,m^{1/q}\min_{\bn\in \La}|c_{\bn}|\le\|\sum_{\bn\in \La} c_{\bn}\psi_{\bn}\|_q \le C^2_{q,d}\,(\log m)^{h(q,d)}\,m^{1/q}
\,\max_{\bn\in \La}|c_{\bn} |,  
$$
and for $1<q\le 2$
$$
C^3_{q,d}m^{1/q}(\log m)^{-h(q,d)}\min_{\bn\in \La}|c_{\bn} |\le \|\sum_{\bn\in \La} c_{\bn}\psi_{\bn}\|_q \le C^4_{q,d}m^{1/q} \max_{\bn\in \La}|c_{\bn} |
$$
where $h(q,d) := (d-1)|1/2-1/q|$.
\end{proposition}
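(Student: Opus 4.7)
The plan is to reduce the two-sided estimate to a Littlewood--Paley/square-function computation, by exploiting that $\Psi$ greedy implies $\Psi^d$ is an unconditional basis of $L^q([0,1]^d)$. Combining this unconditionality with the Khintchine--Kahane inequality in $L^q$ yields the standard equivalence
\[
\Big\|\sum_{\bn\in\La} c_{\bn}\psi_{\bn}\Big\|_q
\;\asymp\;
\Big\|\Big(\sum_{\bn\in\La} |c_{\bn}\psi_{\bn}|^2\Big)^{1/2}\Big\|_q,
\]
with constants depending only on $q$ and the unconditional constant of $\Psi$. Pulling $\max_{\bn}|c_{\bn}|$ and $\min_{\bn}|c_{\bn}|$ out of the square function then produces both-sided control by a single quantity, $\|(\sum_{\bn\in\La}|\psi_{\bn}|^2)^{1/2}\|_q$, so the whole proposition reduces to showing
\[
c_1\,m^{1/q}\;\leq\;\Big\|\Big(\sum_{\bn\in\La}|\psi_{\bn}|^2\Big)^{1/2}\Big\|_q\;\leq\;c_2\,(\log m)^{h(q,d)}\,m^{1/q}
\]
for $q\geq 2$, and the reversed pattern of log-loss when $1<q\leq 2$ (which then follows by duality $(L^q)^*=L^{q'}$, since $h(q,d)=h(q',d)$).

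Next I would handle the model case where $\La=A_1\times\cdots\times A_d$ is a tensor--product set of cardinality $m=\prod|A_i|$. Then $\sum_{\bn\in\La}|\psi_{\bn}|^2=\prod_{i=1}^d\sum_{n_i\in A_i}|\psi_{n_i}|^2$, so Fubini factorizes the $L^q$--norm and one gets, via one variable at a time and the one-dimensional identity $\|(\sum_{k\in A_i}|\psi_k|^2)^{1/2}\|_q\asymp\|\sum_{k\in A_i}\psi_k\|_q\asymp |A_i|^{1/q}$ (the first $\asymp$ from Khintchine--Kahane applied to $\Psi$, the second from greedy $\Rightarrow$ democratic), the clean bound $\asymp m^{1/q}$ with no logarithmic factor.

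For a general $\La$ with $|\La|=m$, the strategy is to decompose $\La$ into at most $O((\log m)^{d-1})$ pieces, each contained in a dyadic tensor--product slab, and then apply the rectangular estimate to each slab. Concretely, group indices by dyadic ranges of their first $d-1$ coordinate projections (so each of the $d-1$ coordinates contributes at most $O(\log m)$ dyadic bins), and estimate each piece by the rectangle bound. Reassembling these pieces forces a transition between $\ell^2$ and $\ell^q$ sums: for $q\geq 2$ one uses Minkowski's integral inequality to move $\ell^2$ inside $L^q$ (losing a factor of $(\log m)^{1/2-1/q}$ per "free" coordinate beyond the first), while for $q\leq 2$ the reverse transition is invoked. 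After $d-1$ such transitions one accumulates exactly $(\log m)^{(d-1)|1/2-1/q|}=(\log m)^{h(q,d)}$.

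The hard step is precisely this dyadic-block reassembly with the sharp exponent $h(q,d)$. The technical work is bookkeeping: verifying that the loss is exactly one factor of $(\log m)^{|1/2-1/q|}$ per extra dimension, not more, and matching this with a lower bound coming from an explicit extremal $\La$ (e.g.\ a ``diagonal" hyperbolic set), which shows the log exponent cannot be improved. Once the square-function estimate is established, the four displayed inequalities in the proposition follow immediately by the reduction in the first paragraph.
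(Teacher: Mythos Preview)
The paper does not prove this proposition; it is quoted from \cite{KPT06}. So there is no ``paper's own proof'' to compare against directly, but the paper's Appendix~1 carries out the analogous argument in the model situation $\SX=\fpq(\sRd)$ (Proposition~\ref{p_app1}), and that argument is the right benchmark.

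Your reduction via unconditionality and Khintchine--Kahane to the square-function quantity $\big\|\big(\sum_{\bn\in\La}|\psi_{\bn}|^2\big)^{1/2}\big\|_q$ is correct, as is the tensor-product case. One small slip: ``greedy $\Rightarrow$ democratic'' alone does not give the growth $|A|^{1/q}$; democracy only says $\|\bbone_A\|$ depends on $|A|$. You need Kadec--Pelczy\'nski (as in \eqref{gb2}) to pin down the exponent $1/q$.

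The real gap is in your treatment of a general $\La$. You propose to ``group indices by dyadic ranges of their first $d-1$ coordinate projections'' and then apply the rectangular estimate to each piece. Two problems: first, for a general greedy basis the index set $\SN$ carries no intrinsic dyadic or geometric structure, so ``dyadic ranges of $n_i$'' is not meaningful (and even after relabelling there is no reason the indices appearing in $\La$ are bounded by $m$). Second, and more seriously, even granting a partition into $O((\log m)^{d-1})$ slabs of the form $\La\cap\big(B_1\times\cdots\times B_{d-1}\times\SN\big)$, each such piece is \emph{not} a tensor-product set, so your rectangular estimate does not apply to it. The reassembly step you sketch therefore has nothing to reassemble.

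The argument that actually works (in \cite{KPT06}, and in the paper's Appendix~1 for the $\fpq$ model) is an induction on $d$. One fixes $x'=(x_2,\dots,x_d)$, writes the square function as $\sum_{n_1}|\psi_{n_1}(x_1)|^2\,F_{n_1}(x')$ with $F_{n_1}(x')=\sum_{\bn'\in\La_{n_1}}|\psi_{\bn'}(x')|^2$, and then uses a one-dimensional \emph{weighted} inequality of Lorentz type (the analogue of Lemma~\ref{l_d1log}): for $q\geq 2$,
\[
\Big\|\Big(\sum_{k\in A} a_k\,|\psi_k|^2\Big)^{1/2}\Big\|_{L^q}\;\lesssim\;(\log|A|)^{\frac12-\frac1q}\,\Big(\sum_{k\in A} a_k^{q/2}\Big)^{1/q},
\]
valid for arbitrary nonnegative weights $a_k$. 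Applying this in $x_1$ (with $a_{n_1}=F_{n_1}(x')$) produces exactly one factor $(\log m)^{1/2-1/q}$ and passes the problem to dimension $d-1$, whence the exponent $h(q,d)=(d-1)|1/2-1/q|$. This Lorentz-type lemma is the missing ingredient in your sketch; it replaces the incorrect ``tensor-product slab'' decomposition.
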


Note that $h(p,d) = h(p',d)$. We now derive the property $\At$ for $\Psi^d$.

\begin{lemma}
Consider the system $\Psi^d$ in $L^p([0,1]^d)$ defined above, where $\Psi$ is a univariate greedy basis in $L^p[0,1]$.
Then, for each $N\geq 1$, the set $\Sigma_N(\Psi^d)$ satisfies property $\At(r,V)$ with $r=1/p'$ and
\[
V= C(p,d)\,(1+\log N)^{(d-1)(\frac1p-\frac12)_+}\,.
\]
\end{lemma}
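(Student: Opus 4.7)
The plan is to proceed by duality, exactly as in the proof of Lemma \ref{L_A3Lp}. By Lemma \ref{L_A3}, it suffices to estimate the norm of the dual functionals: writing $\{\psi^*_\bn\}_{\bn\in\SN^d}$ for the dual system of $\Psi^d$ in $L^{p'}([0,1]^d)$, one needs to show
\Be
\Big\|\sum_{\bn\in A}\e_\bn\psi^*_\bn\Big\|_{L^{p'}}\,\leq\,C(p,d)\,(1+\log N)^{(d-1)(\frac1p-\frac12)_+}\,|A|^{1/p'},
\label{planA}
\Ee
for every finite set $A\subset\SN^d$ with $|A|\leq N$ and every $(\e_\bn)$ with $|\e_\bn|=1$.

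Next I would invoke the result \cite[Theorem 5.1]{DKKT} already used in Lemma \ref{L_A3Lp}: if $\Psi$ is a greedy basis in $L^p$, then $\Psi^*$ is a greedy basis in the dual $L^{p'}$. Since the duality $(\Psi^d)^*=(\Psi^*)^d$ is immediate, the system $\{\psi^*_\bn\}$ is precisely the $d$-fold tensor product associated with the univariate greedy basis $\Psi^*$ of $L^{p'}$. Hence Proposition \ref{gbT2} applies to it with the parameter $q=p'$.

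Applying the upper bound in Proposition \ref{gbT2} to the coefficients $c_\bn=\e_\bn$ (so that $\max_\bn|c_\bn|=1$) gives, in the case $p'\geq2$ (i.e.\ $1<p\leq2$),
\[
\Big\|\sum_{\bn\in A}\e_\bn\psi^*_\bn\Big\|_{p'} \,\leq\, C^2_{p',d}\,(\log|A|)^{h(p',d)}\,|A|^{1/p'},
\]
while in the case $1<p'\leq2$ (i.e.\ $p\geq2$) the same bound holds without the logarithmic factor. Since
\[
h(p',d)=(d-1)\big|\tfrac12-\tfrac1{p'}\big|=(d-1)\big(\tfrac1p-\tfrac12\big)\quad\mbox{when $p\leq2$,}
\]
and the corresponding exponent vanishes when $p\geq 2$, the two cases combine into the single log exponent $(d-1)(1/p-1/2)_+$. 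Using $|A|\leq N$ to replace $\log|A|$ by $1+\log N$ then yields \eqref{planA}, and hence property $\At(1/p',V)$ with the stated value of $V$.

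There is no genuine obstacle here: the argument is essentially a tensorized dualization of Lemma \ref{L_A3Lp}, and both nontrivial ingredients (preservation of the greedy property under duality from \cite{DKKT}, and the sharp tensor product estimate in Proposition \ref{gbT2}) are already available. The only mildly delicate point is bookkeeping the absolute value in $h(p',d)$ so that it matches the positive part $(1/p-1/2)_+$ in the final expression for $V$, which is a direct computation once the two cases $p\leq 2$ and $p\geq 2$ are separated.
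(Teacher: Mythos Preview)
Your proposal is correct and follows essentially the same route as the paper: reduce to the dual estimate via Lemma~\ref{L_A3}, use \cite[Theorem 5.1]{DKKT} to see that $\Psi^*$ is greedy in $L^{p'}$ so that $(\Psi^d)^*=(\Psi^*)^d$ is a tensor product of univariate greedy bases, and then apply Proposition~\ref{gbT2} with $q=p'$. Your version is in fact slightly more careful, since you keep the signs $\e_\bn$ explicit (as required by Lemma~\ref{L_A3}) and note that Proposition~\ref{gbT2} handles them via the $\max|c_\bn|$ factor.
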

\begin{proof}
By Lemma \ref{L_A3}, it suffices to compute 
\[
\sup_{|\La|\leq N}\big\|\sum_{\bn\in \La}\psi^*_\bn\Big\|_{L^{p'}},
\]
where $\psi^*_\bn=\psi^*_{n_1}\otimes\ldots\otimes\psi^*_{n_d}$, $\bn
\in\SN^d$, are the elements of the dual system $(\Psi^d)^*$. Since $\Psi^*$ is a univariate greedy basis (by \cite[Thm 5.1]{DKKT}),
we can apply Proposition \ref{gbT2} with $q=p'$, and obtain
\[
\sup_{|\La|\leq N}\big\|\sum_{\bn\in \La}\psi^*_\bn\Big\|_{L^{p'}}\,\leq\, C_{p,d}\,(1+\log N)^{(d-1)(\frac1p-\frac12)_+}\,N^{1/p'}.
\]
\end{proof}
 \begin{theorem}\label{gbP3} Let $1<p<\infty$.
Consider the system $\Psi^d$ in $L^p([0,1]^d)$ defined above, where $\Psi$ is a univariate greedy basis in $L^p[0,1]$.
Then, the WCGA in $L^p([0,1]^d)$ applied to $\Psi^d$ satisfies 
\begin{equation}\label{gb5}
\|f-\G_{\oldphi(m)}(f)\|_p \le C\sigma_m(f)_p,
\end{equation}
with $\oldphi(m)=C(t,p,d,\Psi)\,(1+\log m)^{(d-1)p'(\frac1p-\frac12)_+}\,m^{\al(p)}$, and with $\al(p)$ as in \eqref{alp}.
\end{theorem}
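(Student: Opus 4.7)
The plan is to deduce Theorem \ref{gbP3} as a direct application of Theorem \ref{th_tem14} to the pair $(\SX,\cD)=(L^p([0,1]^d),\Psi^d)$, by verifying each of its three hypotheses. For (i), Lemma \ref{L_rhoC} says that the modulus of smoothness of $L^p([0,1]^d)$ (which is itself an $L^p$-space over a product measure) satisfies $\rho(t)\leq \ga\,t^q$ with $q=\min\{p,2\}$. For (iii), since $\Psi$ is greedy, it is unconditional, and the tensor product $\Psi^d$ inherits unconditionality in $L^p([0,1]^d)$ with constant depending only on $p$, $d$, and the unconditionality constant of $\Psi$; thus $\Au(U)$ holds with a parameter $U=U(p,d,\Psi)$ that is independent of $N$, so $\log(U+1)$ is absorbed into the final constant. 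Finally, (ii) is exactly the statement of the lemma immediately preceding Theorem \ref{gbP3}: $\At(r,V)$ holds on $\Sigma_N(\Psi^d)$ with $r=1/p'$ and $V=C(p,d)(1+\log N)^{(d-1)(\frac1p-\frac12)_+}$.

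Once these three properties are in place, Theorem \ref{th_tem14} yields the Lebesgue-type inequality \eqref{oldphi} with
\[
\oldphi(N)=\Big\lfloor C_1(\tau,\ga,q)\,\log(U+1)\,V^{q'}\,N^{rq'}\Big\rfloor.
\]
It then remains to simplify the exponents and verify that this matches the form of $\oldphi(m)$ claimed in the theorem. The key identity is that $rq'=q'/p'=\al(p)$ as defined in \eqref{alp}: for $1<p\leq 2$ we have $q=p$, hence $q'=p'$ and $rq'=1=\al(p)$; for $p\geq2$ we have $q=2$, hence $q'=2$ and $rq'=2/p'=\al(p)$. As for the logarithmic factor, $V^{q'}=C^{q'}(1+\log N)^{(d-1)q'(\frac1p-\frac12)_+}$; when $p\leq2$ the exponent becomes $(d-1)p'(\frac1p-\frac12)$, while when $p\geq 2$ the positive part is $0$ and the factor reduces to a constant. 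In both cases the exponent of $(1+\log N)$ is $(d-1)p'(\frac1p-\frac12)_+$, which is precisely what appears in the statement.

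I do not foresee a genuine obstacle here: the result is essentially a packaging of (a) the $L^p$-modulus of smoothness estimate from Lemma \ref{L_rhoC}, (b) unconditionality of $\Psi^d$, and (c) the $\At$-estimate for $\Psi^d$ established via the dual-basis computation using Proposition \ref{gbT2} together with \cite[Thm 5.1]{DKKT}. The only thing requiring any care is bookkeeping with the two cases $p\leq 2$ and $p\geq 2$ to check that the exponents $rq'$ and $q'(\frac1p-\frac12)_+$ collapse to the clean expressions $\al(p)$ and $p'(\frac1p-\frac12)_+$ claimed in the theorem. One could alternatively invoke Theorem \ref{th_Ds} instead of Theorem \ref{th_tem14}; but since no example of $\D(s,c_1)$ with $s$ smaller than $q'$ is known for a general greedy basis in $L^p$, this would not give a better bound here.
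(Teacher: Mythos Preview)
Your proposal is correct and follows exactly the same approach as the paper: the paper's proof is a single sentence stating that the result is a direct application of Theorem \ref{th_tem14} together with the preceding lemma, and you have simply written out the verification of the hypotheses and the exponent bookkeeping in full.
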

\begin{proof} 
This is again a direct application of Theorem \ref{th_tem14} and the previous lemma.
\end{proof}


\subsection{Nikol'skii $\ell^1\SX$ property}

When $\Psi=\{\psi_k\}_{k=1}^\infty$ is a basis in $L^p := L^p([0,1)^d)$, $1<p<\infty$,
then the $\At$ property essentially amounts to compute the following operator norms
$$
\|S_\La \|_{p,A} := \|S_\La \|_{L^p\to A}:= \sup_{f\neq 0} \|S_\La (f)\|_A/\|f\|_p\,,
$$
where we denote, for each finite set $\La\subset \N$,
$$
S_\La(f) := S_\La (f,\Psi) := \sum_{k\in \La } c_k(f)\psi_k,\qquad \|S_\La (f)\|_A:= \sum_{k\in \La } |c_k(f)|,
$$
and as usual $c_k=\lan f{\psi^*_k}$. Then $\At(r,V)$ holds if and only if
\[
\|S_\La \|_{p,A} \leq V\,|\La|^r.
\]
The notation refers to $A=\{f\in L^1\mid \sum_{k\in \La } |c_k(f)|<\infty\}$,
which for the trigonometric system is the usual Wiener algebra.

\subsection{General lower bounds}

We say that a Banach space $\SX$ has type $t$
if there exists a universal constant $C$ such that for $f_k\in \SX$ and all $n\geq 1$,
\Be\label{ip2}
\left({\Ave}_{\varepsilon_k=\pm 1}\|\sum_{k=1}^n\varepsilon_k  f_k\|^t\right)^{1/t}
\leq C\left(\sum_{k=1}^n \| f_k\|^t\right)^{1/t}.
\Ee
So, given $\La\subset \N$, if we set $f_k=\psi_k$, $k\in \La $, then there exists a function $f=\sum_{k\in \La } \varepsilon_k \psi_k$ with 
$\|f\|_p \le C|\La| ^{1/t}$. Thus,
\[
\|S_\La  f\|_A=|\La| =|\La| ^{1/t'}|\La| ^{1/t}\geq C^{-1}\,|\La| ^{1/t'}\,\|f\|_p.
\]
For $\SX=L^p$ it is well-known that $t=\min\{p,2\}$ if $1<p<\infty$.
Therefore, the following lower bounds always hold
\Be
\|S_\La \|_{p,A}\gtrsim |\La| ^{1/{p'}} \mbox{ if $1<p\leq 2$},\quad\mbox{and}\quad
 \|S_\La \|_{p,A}\gtrsim |\La| ^{1/{2}} \mbox{ if $p>2$}.
\label{lower_pA}
\Ee
As we shall see below, there are bases $\Psi$ for which the symbols ``$\gtrsim$'' in \eqref{lower_pA} can be replaced by ``$\approx$'' for all $\La$.
Notice, from Theorem \ref{th_tem14}, that the WCGA will be most effective in those situations, since the exponents will satisfy $rq'=1$, and therefore we have
$\oldphi(N)=c(t,p)\,(\log U)\, N$. 

\subsection{Schauder bases}

Let $\Psi$ be a (normalized) Schauder basis in $L^p$, $1<p<\infty$. Then, the following general upper bound holds.

\begin{proposition}
There exists $\e=\e(p,\Psi)>0$ and $c>0$ such that
\[
\|S_\La \|_{p,A}\leq c\,|\La| ^{1-\e},\quad \forall\;\La\subset \N.
\]
\end{proposition}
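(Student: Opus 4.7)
The plan is to apply duality and exploit the non-trivial Rademacher type of $L^{p'}$. Specializing Lemma \ref{L_A3} to the finite set $\La$ yields the identity
\[
\|S_\La\|_{p,A}\,=\,\sup_{|\e_k|=1}\Big\|\sum_{k\in\La}\e_k\psi_k^*\Big\|_{L^{p'}},
\]
where $\{\psi_k^*\}$ is the biorthogonal system to $\Psi$. Since $L^p$ is reflexive and $\Psi$ is Schauder, the basis $\Psi$ is shrinking, so $\{\psi_k^*\}$ is itself a normalized Schauder basis of $L^{p'}$ and $\psi_k^*\to 0$ weakly; moreover, the basis constant $K$ of $\Psi$ gives $\|\psi_k^*\|_{p'}\leq 2K$ uniformly. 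Thus the task reduces to showing $\sup_{|\e_k|=1}\|\sum_{k\in\La}\e_k\psi_k^*\|_{p'}\leq c\,|\La|^{1-\e}$ for some $\e=\e(p,\Psi)>0$.

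The quantitative input is that $L^{p'}$ has non-trivial Rademacher type $s=\min(p',2)>1$, by Lemma \ref{L_rhoC} and the standard equivalence between power-type smoothness and type. Khinchine's inequality then yields the average bound
\[
\mathbb{E}_{\e}\,\Big\|\sum_{k\in\La}\e_k\psi_k^*\Big\|_{p'}\,\leq\, C\,|\La|^{1/s},
\]
so a well-chosen sign vector already gives a sub-linear estimate. The hard part is to promote this to a bound uniform in the signs. I would proceed by contradiction: if no such $\e>0$ existed, there would be sets $\La_n$ with $|\La_n|\to\infty$ and signs $\e^{(n)}$ satisfying $\|\sum_{k\in\La_n}\e_k^{(n)}\psi_k^*\|_{p'}\geq |\La_n|^{1-1/n}$, i.e., the signed sum essentially saturates the triangle inequality. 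Since $(\psi_k^*)$ is bounded and weakly null, a Bessaga--Pelczy\'nski selection produces a block basic subsequence; combining this with the near-saturation along $(\La_n)$ and an iterated extraction (or a spreading-model construction \`a la Brunel--Sucheston), one extracts a block basic sequence equivalent to the $\ell^1$ unit-vector basis, contradicting the $B$-convexity of $L^{p'}$ (which follows from its non-trivial type via Beck's theorem).

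The main obstacle is precisely this last step. The hypothesis ``sup of signed sums close to $N$'' does not by itself yield $\ell^1$-equivalence (a constant sequence in $\ell^2$ saturates the sup bound trivially but is not $\ell^1$-like), so the Schauder-basis structure must be used decisively, and it is here that the uniform bound on the basis constant enters non-trivially. A clean route is via Rosenthal's $\ell^1$-theorem applied to the bounded sequence $(\e_k^{(n)}\psi_k^*)$: the non-containment of $\ell^1$ in $L^{p'}$ yields a weakly Cauchy subsequence, which together with the near-saturation condition and the uniform partial-sum control from the basis constant should produce the desired contradiction.
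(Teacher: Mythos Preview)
Your proposal has a genuine gap, and you identify it yourself. The duality step is correct: $\|S_\La\|_{p,A}=\sup_{|\e_k|=1}\big\|\sum_{k\in\La}\e_k\psi_k^*\big\|_{p'}$, and the type inequality bounds the \emph{average} over signs by $C|\La|^{1/s}$. But passing from the average to the supremum is the entire content of the proposition, and your contradiction sketch does not close it. Near-saturation of the triangle inequality by a \emph{single} signed sum over each $\La_n$ does not yield an $\ell^1$-equivalent block sequence (as your own $\ell^2$ remark shows), and applying Rosenthal's theorem to the already weakly null sequence $(\psi_k^*)$ returns nothing new. Nothing in the sketch actually uses the partial-sum projections in a way that forces the contradiction; ``should produce the desired contradiction'' is not a proof.

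The paper bypasses all of this by quoting the Gurari--Gurari theorem \cite{gurari}: every Schauder basis of a uniformly convex space (hence of $L^p$, $1<p<\infty$) satisfies a lower $\ell^s$-estimate
\[
\Big\|\sum_n a_n\psi_n\Big\|_p\;\geq\; c^{-1}\Big(\sum_n|a_n|^s\Big)^{1/s}
\]
for some finite $s=s(p,\Psi)$. H\"older then gives $\sum_{n\in\La}|a_n|\leq|\La|^{1/s'}(\sum_n|a_n|^s)^{1/s}\leq c\,|\La|^{1-1/s}\|f\|_p$, so $\e=1/s$ and the proof is two lines. Dually, Gurari--Gurari applied in the uniformly smooth space $L^{p'}$ gives an upper $\ell^t$-estimate for $\Psi^*$ with some $t>1$, which is precisely the missing step in your route; you are in effect attempting to reprove that theorem by hand, and it is in \emph{its} proof that the Schauder partial-sum bound is used decisively.
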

\begin{proof}
Indeed, by a well-known theorem of Gurari-Gurari \cite{gurari}, there exists $s<\infty$ and $c>0$ such that 
\[
\|\sum_{n=1}^\infty a_n\psi_n\|_p\geq \frac1c\,\Big(\sum_{n=1}^\infty|a_n|^s\Big)^{1/s},
\]
for all finite sequences of scalars $a_n$. So if $f=\sum_n a_n\psi_n$ we have
\[
\|S_\La f\|_A=\sum_{n\in \La }|a_n|\leq |\La| ^{1/s'}\,\Big(\sum_{n=1}^\infty|a_n|^s\Big)^{1/s}\leq c|\La| ^{1-\frac1s}\,\|f\|_p.
\]
Thus, the result holds with $\e=1/s$.
\end{proof}

\BR
When no further assumption is made on the basis, the bound in the proposition cannot be improved, even if $p=2$.
Indeed, for every $\al<1$, consider the basis $\Psi$ in $L^2$ constructed in \cite[Proposition 3.10]{GW14}.
This basis satisfies the following property: if $f=\sum_{n=1}^{2N}\psi_n$ and $\La=\{1,3,\ldots, 2N-1\}$, then
\[
\|S_\La f\|_2\geq c N^\al\,\|f\|_2.
\]
Since $\|S_\La f\|_2\leq \|S_\La f\|_A$ (by the triangular inequality), we conclude that
\[
\|S_\La \|_{2,A}\geq c \,|\La| ^\al, \quad \forall\; \La\subset  2\N-1.
\]
\ER

\subsection{Uniformly bounded orthogonal bases} 
Suppose that $\Psi=\{\psi_k\}_{k\geq1}$ is an orthonormal system in $L^2([0,1]^d)$ such that $c_\Psi:=\sup_{k\geq1}\|\psi_k\|_{L^\infty}<\infty$.
These systems satisfy $c_\Psi^{-1}\leq \|\psi_k\|_p\leq c_\Psi$, for all $k\geq1$ and all $1\leq p\leq \infty$.

\subsubsection{Case $2\le p<\infty$} 
\begin{proposition}\label{ipP1} If $\Psi$ is a uniformly bounded orthogonal basis of $L^p[0,1]$, $2\le p<\infty$, then
$$
\|S_\La (\cdot,\Psi)\|_{L^p\to A} \asymp |\La| ^{1/2},\quad \forall\;\La .
$$
\end{proposition}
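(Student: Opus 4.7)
The proof splits into an upper and a lower bound, both of which should be short.

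For the upper bound, the plan is to combine Cauchy--Schwarz with Parseval's identity. Given $f\in L^p$ with coefficients $c_k=c_k(f)=\lan{f}{\psi_k}$, I would estimate
\[
\|S_\La f\|_A=\sum_{k\in\La}|c_k|\,\leq\,|\La|^{1/2}\Bigl(\sum_{k\in\La}|c_k|^2\Bigr)^{1/2}\,\leq\,|\La|^{1/2}\,\|f\|_{L^2},
\]
where the last inequality is Bessel applied to the orthonormal system $\Psi$ in $L^2$. Since $[0,1]^d$ has measure $1$ and $p\geq 2$, H\"older's inequality gives $\|f\|_{L^2}\leq \|f\|_{L^p}$, so $\|S_\La\|_{p,A}\leq |\La|^{1/2}$.

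For the lower bound I would just invoke the general type-argument from the preceding subsection. Since $L^p$ has type $\min\{p,2\}=2$ when $p\geq 2$, the estimate \eqref{lower_pA} already yields $\|S_\La\|_{p,A}\gtrsim |\La|^{1/2}$ (with the case $p=2$ giving the same exponent $1/p'=1/2$). Alternatively, one can exhibit this directly by choosing signs $\e_k=\pm1$ via Khintchine's inequality so that $f=\sum_{k\in\La}\e_k\psi_k$ satisfies $\|f\|_{L^p}\lesssim |\La|^{1/2}$ (using uniform boundedness of $\Psi$ to pass from the $L^2$ Khintchine bound to $L^p$ via $\|f\|_p\leq \|f\|_\infty^{1-2/p}\|f\|_2^{2/p}$, or more cleanly via the Khintchine--Kahane inequality applied in $L^p$), while trivially $\|S_\La f\|_A=|\La|=|\La|^{1/2}\cdot|\La|^{1/2}$.

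There is essentially no obstacle here: the orthonormality plus uniform boundedness makes both directions routine, and the matching exponent $1/2$ arises because Cauchy--Schwarz is sharp for ``flat'' coefficient vectors, which is exactly what the random-sign construction produces. The only mild point to check is that the Bessel step $\|f\|_2\leq \|f\|_p$ requires the underlying measure to be finite (satisfied here since $|[0,1]^d|=1$), and that the constants depend only on $p$ and on $c_\Psi=\sup_k\|\psi_k\|_\infty$.
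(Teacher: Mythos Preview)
Your proof is correct and matches the paper's approach: the paper likewise cites \eqref{lower_pA} for the lower bound and calls the upper bound an elementary argument (the Cauchy--Schwarz $+$ Bessel $+$ $\|f\|_2\leq\|f\|_p$ chain you wrote out is precisely that argument).
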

\begin{proof}
The lower bound was shown in \eqref{lower_pA}. The upper bound follows from an elementary argument, as in \cite[Example 1]{Tem14}.
\end{proof}

\subsubsection{Case $1< p\le 2$.}
In this case the following holds
\Be
c_1\,|\La| ^{1/2}\leq \|S_\La \|_{p,A} \leq\,c_2\, |\La| ^{1/p},\quad \forall\;\La ,
\label{ip3}
\Ee
and some constants $c_i(p,\Psi)>0$.
The lower bound is due to $\|S_\La \|_{p,A}\geq \|S_\La \|_{2,A}=|\La|^{1/2}$. The upper bound follows
from the argument in  \cite[Example 1q]{Tem14}; see also \cite[Ch. 8]{Tem18}.
The example of the univariate trigonometric system shows that the bounds in (\ref{ip3}) cannot be improved
for this class of bases. 

\BR
It is known that uniformly bounded orthogonal bases cannot be unconditional in $L^p$, for any $p\not=2$; see \cite{gap58}.
Thus, when $p>2$ we cannot hope to obtain $\oldphi(N)=O(N)$ from Theorem \ref{th_tem14} using these examples. 
However, it is possible to construct such bases with $\oldphi(N)=O(N\log\log N)$; see Proposition \ref{ipP3} below.
\ER
 
 \subsection{Unconditional bases} 
Recall that a Banach space $\SX$ has cotype $q$ if there exists $c>0$ such that
\Be
\Ave_{\e_k=\pm1}\,\Big[\|\sum_{n=1}^N \e_nf_n\|\Big]\,\geq\, c\,\Big[\sum_{n=1}^N\|f_n\|^q\Big]^\frac1q.
\label{Ecotype}
\Ee
If $\Psi$ is an unconditional basis, and we let $f_n=a_n\psi_n$, then we have
\Be
\|\sum_{n=1}^Na_n\psi_n\|\geq c_2\,\Big[\sum_{n=1}^N|a_n|^q\Big]^\frac1q,
\label{anlq}
\Ee
with a constant $c_2=c_2(\Psi,c)>0$. Thus, if $f=\sum_n a_n \psi_n$, for a finite sequence $a_n$, we have 
\Be
\|S_\La f\|_A=\sum_{n\in \La }|a_n|\leq |\La| ^{1/q'}\,\Big[\sum_{n}|a_n|^q\Big]^\frac1q\leq c_2^{-1}\,|\La| ^{1/q'}\,\|f\|.
\label{upp_uncond}
\Ee
Now, specializing to the case when $\SX=L^p$, we know that $q=\max\{2,p\}$.
Therefore, from \eqref{upp_uncond} and \eqref{lower_pA} we obtain the following

\

\sline {\bf Case $1< p\leq2 $.} 
\Be
c_1\,|\La| ^{1/p'}\leq \|S_\La \|_{p,A} \leq\,c_2\, |\La| ^{1/2},\quad \forall\;\La ,
\label{uncond_pl2}
\Ee

\sline {\bf Case $2\le p<\infty$.} 
\Be
c_1\,|\La| ^{1/2}\leq \|S_\La \|_{p,A} \leq\,c_2\, |\La| ^{1/p'},\quad \forall\;\La ,
\label{uncond_pg2}
\Ee

These inequalities cannot be improved for the whole class of unconditional bases. Indeed, 
when $\Psi$ is the univariate Haar basis we have, for all $1<p<\infty$,
\Be
\|S_\La \|_{p,A} \asymp |\La| ^{1/p'},\quad \forall\; \La.
\label{SQhaar}
\Ee
On the other hand, it is known that $L^p\approx L^p\oplus \ell^2$, when $1<p<\infty$. 
Consider the basis $\{\phi_n\}_{n=1}^\infty$ in $L^p\oplus\ell^2$,
 where $\{\phi_{2n-1}\}$ is the Haar system  in $L^p$, and $\{\phi_{2n}\}$ is the canonical basis in $\ell^2$.
Then, the above isomorphim produces a (seminormalized)  unconditional basis $\Psi$ in $L^p$, with the property
\Be
\|S_\La \|_{p,A}\,\approx\,|\La| ^{1/2}, \quad \forall\;\La\subset 2\N.
\label{Lpl2}
\Ee

\subsection{Quasi-greedy bases} 
If $\Psi$ is a (normalized) quasi-greedy basis in a Banach space $\SX$ of cotype $q$, then letting $f_n=\psi_n$ in \eqref{Ecotype}
we obtain
\[
\|\sum_{n\in \La }\psi_n\|\geq c_1\,|\La| ^{1/q},
\]
for some $c_1(\Psi)>0$. This is weaker than \eqref{anlq}, but using \cite[Lemma 4.1]{GrNi01}
we have
\[
\|(a_n)\|_{\ell^{q,\infty}}\leq C\,\|\sum_n a_n\psi_n\|. 
\]
If $f= \sum_n a_n\psi_n$, for a finite sequence $(a_n)$, and $(a^*_j)$ denotes its decreasing rearrangement, then  
\Bea
\|S_\La f\|_A  =  \sum_{n\in \La }|a_n| & \leq &  \sum_{j=1}^{|\La| }a^*_j\,\leq\, \|(a_n)\|_{\ell^{q,\infty}}\,\sum_{j=1}^{|\La| }{j^{-\frac1q}}\nonumber\\
& \leq &  C'\,|\La| ^{1/q'}\,\|f\|.
\label{upp_qreedy}
\Eea
When $\SX=L^p$, using the corresponding value for the cotype $q$, one obtains again the bounds in \eqref{uncond_pl2} and \eqref{uncond_pg2}.

From the examples in \eqref{SQhaar} and \eqref{Lpl2} it is clear that these bounds cannot be improved in the class of all quasi-greedy bases.
Notice further that within this class one can replace the example in \eqref{Lpl2} by the following construction.

\begin{proposition}\label{ipP3} There exists a uniformly bounded orthonormal system $\Psi=\{\psi_k\}_{k=1}^\infty$,
consisting of trigonometric polynomials,
which is a quasi-greedy basis in $L^p[0,1]$, for all $1<p<\infty$. Moreover,
 $\Psi$ is democratic with $\|\sum_{k\in\La}\psi_k\|_p\asymp |\La|^{1/2}$, for all finite $\La\subset\SN$.
\end{proposition}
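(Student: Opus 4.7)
The plan is to build $\Psi$ as an amalgam of ``flat'' orthonormal bases on dyadic frequency blocks and then reduce every desired estimate to a block-by-block statement via the Littlewood--Paley inequality. The main obstacle will be producing a \emph{uniform} blockwise quasi-greedy constant. Concretely, for each $n\geq0$ set $B_n:=\{2^n,\ldots,2^{n+1}-1\}$ and let $V_n:=\Span\{e^{2\pi ikx}:k\in B_n\}$, a subspace of $L^2[0,1]$ of dimension $2^n$. A Kashin--Rudin--Shapiro type construction produces an $L^2$-orthonormal basis $\{\psi^{(n)}_j\}_{j=1}^{2^n}$ of $V_n$, made of trigonometric polynomials with frequencies in $B_n$, satisfying the strong flatness bound
\Be\label{flat}
\Big\|\sum_{j\in A}\e_j\psi^{(n)}_j\Big\|_\infty\leq c\,|A|^{1/2},
\Ee
for every $A\subset\{1,\ldots,2^n\}$ and every unimodular sequence $(\e_j)$, with $c$ independent of $n$, $A$, $\bfe$. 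In particular $\|\psi^{(n)}_j\|_\infty\leq c$, and enumerating $\Psi$ block by block yields a UBOS of trigonometric polynomials on $[0,1]$.

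\textbf{Schauder basis and democracy.} Because each $\psi_k$ is spectrally supported in a single dyadic shell, the Littlewood--Paley equivalence
\[
\|f\|_p\asymp\Big\|\Big(\sum_n|P_{B_n}f|^2\Big)^{1/2}\Big\|_p,\qquad 1<p<\infty,
\]
shows that $\Psi$, in the block-by-block enumeration, is a Schauder basis of $L^p$ for every $1<p<\infty$. For democracy, decompose $\La=\bigsqcup_n\La_n$ according to blocks and set $S_n:=\sum_{k\in\La_n}\psi_k$. By orthonormality $\|S_n\|_2=|\La_n|^{1/2}$, and \eqref{flat} gives $\|S_n\|_\infty\leq c|\La_n|^{1/2}$. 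For $p\geq2$ the Minkowski inequality in $L^{p/2}$ yields $\|(\sum_n S_n^2)^{1/2}\|_p\leq(\sum_n\|S_n\|_p^2)^{1/2}\leq c|\La|^{1/2}$; for $p\leq2$ the corresponding upper bound follows from $\|\cdot\|_p\leq\|\cdot\|_2$, while the matching lower bound comes from the elementary H\"older interpolation $\|h\|_2^2\leq\|h\|_p^p\|h\|_\infty^{2-p}$ applied to $h=(\sum_n S_n^2)^{1/2}$ together with $\|h\|_\infty\leq c|\La|^{1/2}$. In all cases the Littlewood--Paley equivalence delivers $\|\sum_{k\in\La}\psi_k\|_p\asymp|\La|^{1/2}$.

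\textbf{Quasi-greediness.} This is the main obstacle. Given $f=\sum_n f_n\in L^p$ with $f_n\in V_n$, the $m$-th greedy approximant has the form $G_m f=\sum_n g_n$, where each $g_n$ is the partial sum of $f_n$ retaining the coefficients of largest modulus (with respect to the basis $\{\psi^{(n)}_j\}_j$ of $V_n$). Thanks to the Littlewood--Paley identification it is enough to show
\[
\Big\|\Big(\sum_n|g_n|^2\Big)^{1/2}\Big\|_p\leq C\,\Big\|\Big(\sum_n|f_n|^2\Big)^{1/2}\Big\|_p,
\]
and I would deduce this from a \emph{uniform} blockwise quasi-greedy estimate $\|g_n\|_p\leq C\|f_n\|_p$ (combined with a Khintchine randomization over independent signs and Fubini to recover the square function on both sides). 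The delicate point is producing such a blockwise constant independent of $n$: a UBOS hypothesis alone would not suffice, but the strong flatness property \eqref{flat}, valid for \emph{every} subsum and \emph{every} unimodular choice, encodes a Kashin-type equivalence between the $L^\infty$ and $L^2$ norms on $V_n$, from which a real-interpolation argument in the $2^n$-dimensional space $V_n$ gives the required uniform comparison of sub-sums with the full $f_n$. Once this blockwise quasi-greedy bound is in hand, reassembly by Littlewood--Paley gives $\|G_m f\|_p\leq C\|f\|_p$, i.e., $\Psi$ is quasi-greedy in $L^p[0,1]$ for every $1<p<\infty$.
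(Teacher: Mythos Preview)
The paper does not actually prove this proposition; it simply cites the constructions of Nielsen \cite{N} and Dilworth--Soto-Bajo--Temlyakov \cite{DS-BT}. Your overall architecture (flat orthonormal bases on dyadic frequency shells, glued together via Littlewood--Paley) is indeed the skeleton of those constructions, and your democracy argument is essentially correct once the blockwise flatness is available. The difficulties lie in the quasi-greedy part, where your sketch contains two genuine gaps.

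First, the claimed equivalence ``$L^\infty\asymp L^2$ on $V_n$'' cannot hold: $V_n$ contains the Dirichlet-type polynomial $\sum_{k\in B_n}e^{2\pi ikx}$, for which $\|\cdot\|_\infty/\|\cdot\|_2\asymp 2^{n/2}$. At best a Kashin-type statement gives such an equivalence on a half-dimensional subspace, not on all of $V_n$, so your ``real-interpolation'' route to a uniform blockwise quasi-greedy constant is not available as stated. Relatedly, the strong flatness bound \eqref{flat} for \emph{every} subset $A$ and \emph{every} choice of signs is much stronger than what the standard Rudin--Shapiro or Olevskii matrices deliver, and you would need to exhibit a concrete matrix with this property.

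Second, and more seriously, the reassembly step is circular. After Khintchine--Fubini you must bound $\mathbb{E}_\e\|\sum_n \e_n g_n\|_p$ by $\mathbb{E}_\e\|\sum_n \e_n f_n\|_p$. But multiplying each block by a sign does not change the moduli of the coefficients, so $\sum_n \e_n g_n = G_m(\sum_n \e_n f_n)$, and the required inequality is precisely the global quasi-greedy bound you are trying to prove. A blockwise estimate $\|g_n\|_p\le C\|f_n\|_p$ does not, by itself, yield the square-function inequality $\|(\sum|g_n|^2)^{1/2}\|_p\le C'\|(\sum|f_n|^2)^{1/2}\|_p$: for $p>2$ the triangle inequality goes the wrong way, and for $p<2$ the comparison $(\sum\|f_n\|_p^2)^{1/2}\lesssim\|(\sum|f_n|^2)^{1/2}\|_p$ fails. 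In the cited references the quasi-greedy constant is obtained not from an abstract blockwise bound but from the specific recursive structure of the Olevskii matrix, which controls the greedy projections directly; you would need to invoke that structure rather than the randomization shortcut.
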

For details on this construction, we refer to \cite{N} and \cite{DS-BT} (see also \cite[Ch 3]{VTsparseb} and \cite[Ch 3]{Tem18}).
The example $\Psi$ in Proposition \ref{ipP3} produces the bounds
\Be
\|S_\La \|_{p,A}\,\approx\,|\La| ^{1/2}, \quad \forall\;\La\subset \N.
\label{ol}
\Ee
Also, quasi-greedness implies that  $\Sigma_N$ has property $\Au$ with $U\lesssim \log N$; 
see \cite[Lemma 8.2]{DKK}. 
In particular, when $p>2$, the Lebesgue inequality for the WCGA which one obtains from Theorem \ref{th_tem14} holds with
$\oldphi(N)=O(N\,\log\log N)$; see \cite{Tem14} or \cite[p. 445]{Tem18}. So far, we do not know any example of a basis (or even a dictionary) in $L^p$, $p>2$, with $\oldphi(N)=O(N)$.  
 
\subsection{Almost greedy bases}

This class is a subset of the previous case, and therefore $\|S_\La \|_{p,A}$ satisfies the same bounds 
\eqref{uncond_pl2} and \eqref{uncond_pg2}. The examples in \eqref{SQhaar} and \eqref{ol} show that the assumption that $\Psi$ is additionally democratic 
does not imply any improvement in those bounds.

 \subsection{Greedy bases} This class is also a subset of the previous three cases, but this time we obtain
the following improvement. The proof follows easily from the same arguments we already gave in \eqref{gb2}.
 
 \begin{proposition}\label{ipP2} Let $\Psi$ be a greedy basis of $L^p$, $1< p<\infty$. Then 
\Be\label{ip7}
\|S_\La \|_{p,A} \asymp |\La| ^{1/p'},\quad \forall\; \La\subset  \N.
\Ee
\end{proposition}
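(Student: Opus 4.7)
\medskip

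\noindent\textbf{Proof proposal for Proposition \ref{ipP2}.}
The plan is to combine the argument already used in Lemma \ref{L_A3Lp} (which yields the upper bound) with a matching lower bound obtained by evaluating $S_\La$ on a concrete test function furnished by democracy of $\Psi$.

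For the upper bound, I would recall from Lemma \ref{L_A3} that property $\At(r,V)$ is equivalent to the norm estimate $\|\bbone^*_{\bfe\La}\|_{L^{p'}}\le V|\La|^r$ for all sign sequences $\bfe$, which in turn controls $\|S_\La\|_{p,A}$ by duality, since
\[
\|S_\La f\|_A=\sum_{k\in\La}|c_k(f)|=\bbone^*_{\bfe\La}(f)\le \|\bbone^*_{\bfe\La}\|_{L^{p'}}\|f\|_p
\]
for $\e_k=\lsgn c_k(f)$. Now, since $\Psi$ is a greedy basis of $L^p$, by \cite[Theorem 5.1]{DKKT} the dual system $\Psi^*$ is a greedy basis of $L^{p'}$. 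In particular $\Psi^*$ is unconditional and democratic, so by the Kadec–Pełczyński type equivalence \eqref{gb2} applied to $\Psi^*$ in $L^{p'}$ (with $q=p'$), one obtains $\|\sum_{k\in\La}\e_k\psi_k^*\|_{L^{p'}}\asymp |\La|^{1/p'}$ uniformly in $\La$ and $\bfe$. This yields $\|S_\La\|_{p,A}\lesssim |\La|^{1/p'}$.

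For the lower bound, I would test $S_\La$ on $f=\sum_{k\in\La}\psi_k$. Applying \eqref{gb2} to $\Psi$ itself (which is unconditional and democratic) with $q=p$, we get $\|f\|_p\asymp |\La|^{1/p}$. On the other hand, $\|S_\La f\|_A=|\La|$, so
\[
\|S_\La\|_{p,A}\ge \frac{\|S_\La f\|_A}{\|f\|_p}\gtrsim \frac{|\La|}{|\La|^{1/p}}=|\La|^{1/p'}.
\]

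There is no real obstacle here: both bounds are essentially present in the paper already (the upper one was written explicitly in Lemma \ref{L_A3Lp}, and the lower one is a one-line test-function computation). The only point to emphasize in the write-up is that the lower bound $|\La|^{1/p'}$ improves on the general lower bound \eqref{lower_pA}, which only gives $|\La|^{1/2}$ when $p>2$; the improvement comes precisely from the democracy of $\Psi$, which forces the fundamental function to be $\asymp N^{1/p}$ rather than the larger $N^{1/\min\{p,2\}}$ available from type alone.
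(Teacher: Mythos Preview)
Your proposal is correct and follows essentially the same approach as the paper, which merely states that ``the proof follows easily from the same arguments we already gave in \eqref{gb2}.'' You have written out those arguments in full: the upper bound via \eqref{gb2} applied to the dual basis $\Psi^*$ in $L^{p'}$ (invoking \cite{DKKT} for duality of greediness, exactly as in Lemma~\ref{L_A3Lp}), and the lower bound via \eqref{gb2} applied to $\Psi$ in $L^p$ by testing on $f=\sum_{k\in\La}\psi_k$.
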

 
 %
  %

\section{Appendix 1}
\setcounter{equation}{0}\setcounter{footnote}{0}
\setcounter{figure}{0}

In this section we prove the following result, which was asserted in \eqref{hr_fpq}.

\begin{proposition}\label{p_app1}
Let $\SX=\fpq(\sRd)$, $1<p,q<\infty$, with norm defined as in \eqref{norm_fpq}.
Then
\Be
\sup_{|A|\leq N}\big\|\sum_{n\in A}\be_I\big\|_{\fpq(\sRd)}\,\leq\, c\, N^\frac1p\,(1+\log N)^{(d-1)(\frac1q-\frac1p)_+}\,.
\label{app1}
\Ee
\end{proposition}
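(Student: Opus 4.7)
The plan is to reduce the claim, via the definition of the $\fpq$-norm, to an $L^{p/q}$ estimate: namely
\[
\Big\|\sum_{I\in A}\be_I\Big\|_{\fpq}^{p} \;=\; \Big\|\sum_{I\in A}|I|^{-q/p}\bbone_I\Big\|_{L^{p/q}}^{p/q},
\]
so it suffices to bound $\|F\|_{L^{p/q}}$ with $F:=\sum_{I\in A}|I|^{-q/p}\bbone_I$. I would then split into two cases.

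The easy case is $p\leq q$, where the exponent $(1/q-1/p)_+$ is zero. Since $p/q\leq 1$, the map $u\mapsto u^{p/q}$ is subadditive on $\SR_+$, and therefore
\[
\int F^{p/q} \;\leq\; \sum_{I\in A}\int |I|^{-1}\bbone_I \;=\; |A|\;\leq\;N,
\]
which yields $\|\sum_I \be_I\|_{\fpq}\leq N^{1/p}$, as desired.

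The harder case is $p>q$. Set $s:=p/q>1$; we must show $\int F^s\lesssim N(\log N)^{(d-1)(s-1)}$. I would decompose $A$ according to the shape $\bj=(j_1,\ldots,j_d)$ of each rectangle: for fixed $\bj$, the subcollection $A_\bj:=A\cap\cR_\bj$ consists of pairwise disjoint rectangles of common volume $2^{-|\bj|}$ (where $|\bj|=j_1+\ldots+j_d$), so $|E_\bj|=|A_\bj|\cdot 2^{-|\bj|}$ for $E_\bj:=\bigcup A_\bj$, and
\[
F \;=\; \sum_{\bj}2^{|\bj|/s}\bbone_{E_\bj}\;=\;\sum_{k\geq 0}2^{k/s}N_k,\qquad N_k:=\sum_{|\bj|=k}\bbone_{E_\bj}.
\]
Two elementary estimates are $\|N_k\|_1=a_k\cdot 2^{-k}$, where $a_k:=\#\{I\in A:|I|=2^{-k}\}$, and $\|N_k\|_\infty \leq T_k := \#\{\bj:|\bj|=k\}\asymp k^{d-1}$. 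Interpolation yields $\|N_k\|_s\lesssim T_k^{1/s'}a_k^{1/s}2^{-k/s}$, so the triangle inequality in $L^s$ gives
\[
\|F\|_s\;\lesssim\;\sum_{k\geq 0} k^{(d-1)/s'}\,a_k^{1/s}.
\]

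The main obstacle is to extract the sharp bound $\lesssim N^{1/s}(\log N)^{(d-1)/s'}$ from this inequality, since a direct Hölder application produces an unwanted extra factor of $\log N$. The correct argument uses both available constraints: the total budget $\sum_k a_k\leq N$ together with the per-level cap $a_k\leq T_k\cdot 2^k\asymp k^{d-1}2^k$ (the total number of dyadic rectangles of volume $2^{-k}$ in $[0,1]^d$). One splits the sum at the threshold $k\sim\log N$: in the ``low'' range the cap is saturated and a weighted Hölder controls the contribution by $(\log N)^{(d-1)/s'}N^{1/s}$, while in the ``high'' range one refines $\|N_k\|_\infty\leq\min(T_k,a_k)$ to exploit that many shapes cannot simultaneously be active when the rectangles are small. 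This is precisely the strategy of Wolniewicz \cite{Wo00} and Kamont-Passenbrunner-Temlyakov \cite{KPT06} for the case $q=2$, where a Khintchine reduction to Haar expansions cleans up the argument; for general $q$ the combinatorial route outlined above carries through with the same extremal example (all shapes at a single level $k\asymp\log N$, each tiling $[0,1]^d$) showing sharpness.
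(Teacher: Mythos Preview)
Your treatment of the case $p\le q$ is correct and essentially coincides with the paper's. The gap is in the case $p>q$: the triangle inequality in $L^s$ applied to the level decomposition $F=\sum_k 2^{k/s}N_k$ is too crude and loses a \emph{power} of $N$, not merely a logarithm as you suggest. Already for $d=1$ (where the target is $\|F\|_s\lesssim N^{1/s}$ with no logarithmic factor), take the nested family $A=\{[0,2^{-k}):0\le k\le K\}$, so $N=K+1$, $a_k=1$ and $T_k=1$ for all $k\le K$. Your bound gives
\[
\|F\|_s\;\le\;\sum_{k=0}^K \min(T_k,a_k)^{1/s'}a_k^{1/s}\;=\;K+1\;=\;N,
\]
whereas the true value is $\|F\|_s\asymp N^{1/s}$ (on $[2^{-(j+1)},2^{-j})$ one has $F\asymp 2^{j/s}$, so $\int F^s\asymp\sum_{j\le K} 2^j\cdot 2^{-j}=N$). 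The refinement $\|N_k\|_\infty\le\min(T_k,a_k)$ adds nothing here since both equal $1$, and no split at $k\sim\log N$ helps because nothing in the hypotheses caps the active scales by $\log N$. The loss is intrinsic to summing $\|2^{k/s}N_k\|_s$ when the supports are heavily nested.

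The paper proceeds quite differently, by induction on $d$. For $d=1$ it first proves $\ell^{p,q}(\sD)\hookrightarrow\fpq(\sD)$ (via a $q$-subadditivity of the $\fpq$-norm on disjointly supported vectors combined with the known $p$-democracy of the canonical basis in $\fpq(\sD)$), which yields the weighted inequality
\[
\Big\|\sum_{I\in A}x_I\be_I\Big\|_{\fpq(\sD)}\;\lesssim\;(1+\log|A|)^{\frac1q-\frac1p}\Big(\sum_{I\in A}|x_I|^p\Big)^{1/p}.
\]
For general $d$ one writes the $\fpq(\sRd)$-norm as an iterated integral, applies this $d=1$ weighted bound in the first coordinate (with the remaining variables frozen), and then invokes the $(d-1)$-dimensional induction hypothesis. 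This route never sums over scales via the $L^s$ triangle inequality and so avoids the loss above. (As a minor point, the references you invoke are Wojtaszczyk \cite{Wo00} and Kerkyacharian--Picard--Temlyakov \cite{KPT06}; their $q=2$ arguments are not the combinatorial scheme you outline.)
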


For $q=2$ this was proved in \cite{Wo00, KPT06}. Here we adapt the arguments to the case of a general $q$.
We split the proof into several lemmas, which have an independent interest.
Recall that, for a sequence $\bx=(x_\la)_{\la\in\La}$, over a set of indices $\La$, we define its support as $\supp\bx=\{\la\in\La\mid x_\la\not=0\}$.

\begin{lemma}
Let $p\geq q$. If $\bx_0,\bx_1,\bx_2,\ldots$ have disjoint supports, then 
\Be
\Ts \Big\|\sum_{n\geq0}\bx_n\Big\|_{\fpq}\,\leq\,\Big(\sum_{n\geq0}\big\|\bx_n\big\|^q_{\fpq}\Big)^{1/q}.
\label{fpqq}
\Ee
\end{lemma}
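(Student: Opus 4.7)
The plan is to reduce the inequality to the ordinary triangle inequality in a Lebesgue space. Write $S_q\bx = \bigl(\sum_I|x_I\bp|^q\bigr)^{1/q}$, and set $f_n := (S_q\bx_n)^q$, a nonnegative function on $[0,1]^d$. Since the supports of the $\bx_n$ are pairwise disjoint, the $q$-th power inside $S_q$ splits additively, so
\[
S_q\!\Big(\sum_{n\geq0}\bx_n\Big)^q \;=\;\sum_{n\geq 0} f_n.
\]

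Next observe that, by the very definition of $\|\cdot\|_{\fpq}$,
\[
\Big\|\sum_{n\geq0}\bx_n\Big\|_{\fpq}^{\,q}
\;=\;\Big\|\sum_{n\geq0} f_n\Big\|_{L^{p/q}},
\qquad
\|\bx_n\|_{\fpq}^{\,q}\;=\;\|f_n\|_{L^{p/q}}.
\]
Since $p\geq q$ we have $p/q\geq 1$, so $L^{p/q}$ is a genuine Banach space and the triangle inequality yields
\[
\Big\|\sum_{n\geq 0} f_n\Big\|_{L^{p/q}}\;\leq\;\sum_{n\geq 0}\|f_n\|_{L^{p/q}}.
\]
Combining these two displays and taking $q$-th roots gives exactly \eqref{fpqq}.

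There is no real obstacle in this argument; the only thing one should verify carefully is the disjointness claim used at the start. The supports are defined with respect to the index set $\sRd$, so $\supp\bx_n\cap\supp\bx_m=\emptyset$ for $n\neq m$ means that for each $I$ at most one of the coefficients $x_{n,I}$ is nonzero, which is what allows us to write $\bigl|\sum_n x_{n,I}\bigr|^q=\sum_n|x_{n,I}|^q$ pointwise in $I$. The hypothesis $p\geq q$ is used only to ensure that $L^{p/q}$ satisfies the triangle inequality; if $p<q$ one would instead get the weaker bound with $\ell^1$ in place of $\ell^q$ on the right, reflecting the fact that $L^{p/q}$ is then only a quasi-Banach space.
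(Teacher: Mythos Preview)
Your proof is correct and essentially identical to the paper's own argument: both use the disjointness of supports to write $S_q\bigl(\sum_n\bx_n\bigr)^q=\sum_n (S_q\bx_n)^q$, then apply the triangle inequality (equivalently, Minkowski's inequality) in $L^{p/q}$, which is legitimate precisely because $p\geq q$. The only cosmetic difference is that you name $f_n:=(S_q\bx_n)^q$ explicitly, whereas the paper leaves this substitution implicit.
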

\begin{proof}
The support condition implies that
\[
\Ts S_q(\sum_{n\geq0}\bx_n)^q=\sum_{n\geq 0}S_q(\bx_n)^q.
\]
Then, the definition of norm in \eqref{norm_fpq} and Minkowski's inequality (since $p/q\geq 1$) give
\Beas
\Ts \Big\|\sum_{n\geq0}\bx_n\Big\|_{\fpq} & = & \Ts\Big\|S_q(\sum_{n\geq0}\bx_n)\Big\|_{L^p}\,=\, \Big\|\sum_{n\geq 0}S_q(\bx_n)^q\Big\|_{L^{p/q}}^{1/q}\\
& \leq & \Ts\Big(\sum_{n\geq0}\big\|S_q(\bx_n)^q\big\|_{L^{p/q}}\Big)^{1/q}\,=\, \Big(\sum_{n\geq0}\big\|\bx_n\big\|^q_{\fpq}\Big)^{1/q}.
\Eeas
\end{proof}

In the next two lemmas we shall assume that $d=1$, so that $\sR_1=\sD$.
We denote by $\ell^{p,q}=\ell^{p,q}(\sD)$ the discrete Lorentz space indexed by $\sD$. 
We shall use the following (equivalent) quasi-norm:
if $\bx=(x_I)_{I\in\sD}$ and if $(x^*_j=|x_{I_j}|)_{j\geq 1}$ is its decreasing rearrangement, 
then 
\[
\big\|\bx\big\|_{\ell^{p,q}}\,:=\,\Big(\sum_{j\geq0} |2^{j/p}x^*_{2^j}|^q\Big)^{1/q}.
\]

\begin{lemma}
Let $d=1$ and  $p\geq q$. Then, $
\ell^{p,q}(\sD)\,\hookrightarrow\, \fpq(\sD)\,\hookrightarrow\,\ell^p(\sD)$, that is
\Be
 \|\bx\|_{\ell^{p}}\,\leq\,\|\bx\|_{\fpq}\,\leq\,c\, \|\bx\|_{\ell^{p,q}}. 
\label{fpq_lpq}\Ee
\end{lemma}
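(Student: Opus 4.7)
The plan is to prove the two inclusions separately; the lower one is a direct pointwise argument, while the upper one requires a dyadic layer decomposition of the coefficients combined with a characteristic-function estimate that exploits the chain structure of dyadic intervals in dimension one.

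For the lower inclusion $\|\bx\|_{\ell^p}\leq \|\bx\|_{\fpq}$, I would apply the pointwise embedding $\ell^q\hookrightarrow\ell^p$ (which holds with constant $1$ since $p\geq q$) to the non-negative sequence $\bigl(|x_I\bp(u)|\bigr)_{I\in\sD}$ for each $u\in[0,1)$. This gives
\[
\sum_{I\in\sD}|x_I\bp(u)|^p\,\leq\,\Bigl(\sum_{I\in\sD}|x_I\bp(u)|^q\Bigr)^{p/q}\,=\,S_q(\bx)(u)^p.
\]
Integrating in $u$ and using $\int\bp^p\,du=1$ then yields $\sum_I|x_I|^p\leq\int S_q(\bx)^p\,du=\|\bx\|_{\fpq}^p$.

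For the upper inclusion $\|\bx\|_{\fpq}\leq c\,\|\bx\|_{\ell^{p,q}}$, I plan to split $\bx$ into dyadic layers by magnitude. Enumerate $\supp\bx=\{I_1,I_2,\dots\}$ with $x^*_j:=|x_{I_j}|$ decreasing, and set $\bx_n:=\sum_{2^n\leq j<2^{n+1}}x_{I_j}\be_{I_j}$. Then the blocks have pairwise disjoint supports $A_n$ with $|A_n|\leq 2^n$ and $|x_I|\leq x^*_{2^n}$ on $A_n$, so by the previous lemma
\[
\|\bx\|_{\fpq}\,\leq\,\Bigl(\sum_{n\geq 0}\|\bx_n\|_{\fpq}^q\Bigr)^{1/q},
\]
while the pointwise inequality $S_q(\bx_n)\leq x^*_{2^n}\,S_q\bigl(\sum_{I\in A_n}\be_I\bigr)$ reduces matters to the characteristic-function bound
\[
\Bigl\|\sum_{I\in A}\be_I\Bigr\|_{\fpq(\sD)}\,\leq\,C\,|A|^{1/p},\qquad \text{for every finite }A\subset\sD.\qquad(*)
\]
Granting $(*)$, I obtain $\|\bx_n\|_{\fpq}\leq C\,x^*_{2^n}\,2^{n/p}$, and summing gives $\|\bx\|_{\fpq}^q\leq C^q\sum_{n\geq 0}\bigl|2^{n/p}x^*_{2^n}\bigr|^q=C^q\,\|\bx\|_{\ell^{p,q}}^q$.

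The main obstacle is $(*)$, and here the key feature of $d=1$ is that the dyadic intervals of $A$ containing a fixed $u\in[0,1)$ form a totally ordered chain under inclusion. I would write
\[
\Bigl\|\sum_{I\in A}\be_I\Bigr\|_{\fpq}^p\,=\,\int_0^1\Bigl(\sum_{I\in A,\;u\in I}|I|^{-q/p}\Bigr)^{p/q}du,
\]
and observe that the inner sum is geometric with ratio $2^{q/p}>1$, hence dominated up to a constant $C_{p,q}$ by its largest term $s(u)^{-q/p}$, where $s(u):=\min\{|I|:I\in A,\,u\in I\}$. Partitioning $[0,1)=\bigsqcup_{I\in A}E_I$ with $E_I:=I\setminus\bigcup_{J\in A,\,J\subsetneq I}J$ then gives $s(u)=|I|$ on $E_I$, so $\int_0^1 s(u)^{-1}du=\sum_{I\in A}|E_I|/|I|\leq |A|$, which yields $(*)$. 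The chain property is precisely what fails in dimension $d\geq 2$, and that failure is the origin of the logarithmic factors that appear in Lemma \ref{L_A3fpq}.
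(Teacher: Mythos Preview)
Your proof is correct and follows the same overall strategy as the paper: the lower inclusion via the pointwise $\ell^q\hookrightarrow\ell^p$ embedding, and the upper one via the dyadic layer decomposition $\bx=\sum_n\bx_n$, the disjoint-support $\ell^q$-inequality of the preceding lemma, and the characteristic-function estimate $(*)$.

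The one genuine difference is in how $(*)$ is justified. The paper simply invokes the $p$-democracy of the canonical basis in $\fpq(\sD)$ for $d=1$, citing \cite[Prop.~3.2]{GH04}. You instead give a self-contained elementary argument: the chain structure of dyadic intervals containing a fixed point makes $\sum_{I\ni u,\,I\in A}|I|^{-q/p}$ a lacunary sum dominated by its largest term $s(u)^{-q/p}$, and then $\int s(u)^{-1}\,du\le|A|$ follows from the partition into the sets $E_I$. (Minor quibble: the $E_I$ partition $\bigcup_{I\in A}I$ rather than all of $[0,1)$, but the integrand vanishes off that union anyway.) Your route has the advantage of being self-contained and of making transparent exactly where $d=1$ enters, which dovetails nicely with your closing remark about the logarithmic loss for $d\ge2$; the paper's route is shorter but relies on an external reference.
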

\begin{proof}
The left inequality is trivial, since $q\leq p$ implies $\fpq\hookrightarrow\mathfrak{f}_{p,p}=\ell^p(\sD)$.
We prove the right inequality. Let $\bx=(x_I)_{I\in\sD}$ and let $|x_{I_1}|\geq |x_{I_2}|\geq \ldots $ be its decreasing rearrangement.
Define
\[
\bx_n=\sum_{2^{n}\leq j<2^{n+1}} x_{I_j}\be_{I_j},
\]
so that $\bx=\sum_{n\geq0}\bx_n$ and the vectors $\bx_n$ have disjoint supports.
Then, \eqref{fpqq} holds. Moreover, for each $n\geq0$
\[
\|\bx_n\|_{\fpq}\,\leq \, x^*_{2^n}\,\Ts\big\|\sum_{2^n\leq j<2^{n+1}}\be_{I_j}\big\|_{\fpq}
\,\leq\, c_{p,q}\, x^*_{2^n}\,2^{n/p},
\]
where the last inequality is due to the $p$-democracy of the canonical basis in $\fpq(\sD)$ when $d=1$; see \cite[Prop. 3.2]{GH04}.
Thus,
\[
\|\bx\|_{\fpq}\,\leq\,\Big(\sum_{n\geq0}\|\bx_n\|^q_{\fpq}\Big)^{1/q}\,
\lesssim\, \Big(\sum_{n\geq0} |2^{n/p}x^*_{2^n}|^q\Big)^{1/q}\,=\,\|\bx\|_{\ell^{p,q}}.
\]
\end{proof}

\begin{lemma}\label{l_d1log}
Let $d=1$ and  $1<q<p<\infty$. Then, for all $A\subset\sD$ with  $|A|\leq N$, it holds 
\[
 \Big\|\sum_{I\in A} x_I\be_I\Big\|_{\fpq(\sD)}\,\leq\,c'\,\big[1+\log N\big]^{\frac1q-\frac1p}\,\Big(\sum_{I\in A}|x_I|^p\Big)^{1/p}. 
\]
\end{lemma}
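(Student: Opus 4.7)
\medskip

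\noindent\textbf{Proof plan for Lemma \ref{l_d1log}.} The strategy is to factor the estimate through the Lorentz space $\ell^{p,q}(\sD)$, using the embedding $\ell^{p,q}(\sD)\hookrightarrow\fpq(\sD)$ established in the preceding lemma (which applies since $p\geq q$), and then to exploit the support restriction $|A|\leq N$ via H\"older's inequality.

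\medskip

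\noindent\textbf{Step 1.} By the previous lemma, it suffices to prove
\[
\|\bx\|_{\ell^{p,q}}\,\leq\,c\,(1+\log N)^{\frac1q-\frac1p}\,\|\bx\|_{\ell^p}
\]
for any $\bx=\sum_{I\in A}x_I\be_I$ with $|A|\leq N$.

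\medskip

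\noindent\textbf{Step 2.} Let $(x^*_j)_{j\geq1}$ denote the decreasing rearrangement of $(|x_I|)_{I\in A}$, extended by zero. Setting $a_n:=2^n (x^*_{2^n})^p$, a direct computation gives
\[
(2^{n/p} x^*_{2^n})^q = a_n^{q/p},
\]
so that $\|\bx\|_{\ell^{p,q}}^q=\sum_{n\geq 0} a_n^{q/p}$, while $\|\bx\|_{\ell^p}^p\asymp \sum_{n\geq 0} a_n$ by the standard equivalence between the $\ell^p$-norm and the dyadic rearrangement sum.

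\medskip

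\noindent\textbf{Step 3.} Since $|\supp \bx|\leq N$, one has $x^*_{2^n}=0$ for $2^n>N$, so only the terms with $n\leq M:=\lfloor\log_2 N\rfloor$ contribute. Apply H\"older's inequality with exponents $p/q>1$ and its conjugate $p/(p-q)$ to obtain
\[
\sum_{n=0}^{M} a_n^{q/p}\,\leq\, (M+1)^{1-q/p}\,\Big(\sum_{n=0}^M a_n\Big)^{q/p}.
\]
Raising both sides to the power $1/q$ yields
\[
\|\bx\|_{\ell^{p,q}}\,\lesssim\, (M+1)^{\frac1q-\frac1p}\,\|\bx\|_{\ell^p}\,\lesssim\,(1+\log N)^{\frac1q-\frac1p}\,\|\bx\|_{\ell^p}.
\]
Combining with Step 1 gives the claim.

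\medskip

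\noindent\textbf{Main point.} The only non-routine ingredient is the observation that the Lorentz $\ell^{p,q}$-norm of a sequence supported on at most $N$ dyadic indices is controlled by its $\ell^p$-norm up to a logarithmic factor of the expected shape; this is precisely a H\"older estimate over the $O(\log N)$ nonzero dyadic scales, which is elementary. The conceptual work has already been done in the previous lemma, where the canonical basis of $\fpq(\sD)$ was shown to be $p$-democratic (a one-dimensional fact) and this was used to derive the embedding $\ell^{p,q}\hookrightarrow\fpq$.
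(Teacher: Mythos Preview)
Your proof is correct and follows essentially the same route as the paper: reduce to the Lorentz space $\ell^{p,q}$ via the preceding embedding lemma, then exploit that only $O(\log N)$ dyadic levels contribute and apply H\"older. The only cosmetic difference is that the paper first interpolates $\|\bx\|_{\ell^{p,q}}\le\|\bx\|_{\ell^{p,1}}^{1-\theta}\|\bx\|_{\ell^{p,p}}^{\theta}$ and then bounds $\|\bx\|_{\ell^{p,1}}\le(1+\log N)^{1/p'}\|\bx\|_{\ell^{p,p}}$, whereas you apply H\"older directly between the dyadic $\ell^{p,q}$ and $\ell^{p,p}$ sums; both computations yield the same exponent $\tfrac1q-\tfrac1p$.
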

\begin{proof}
By \eqref{fpq_lpq} and H\"older's inequality we have
\Be
\|\bx\|_{\fpq}\lesssim\|\bx\|_{\ell^{p,q}}\,\leq\,\|\bx\|^{1-\theta}_{\ell^{p,1}}\,\|\bx\|^\theta_{\ell^{p,p}},
\label{f_aux1}
\Ee
with $\theta=(1-\frac1q)/(1-\frac1p)$.
Another use of H\"older's inequality gives
\[
\|\bx\|_{\ell^{p,1}}\,=\,\sum_{j=0}^{\lfloor \log_2|A|\rfloor }2^{j/p}x^*_{2^j}\,\leq\,(1+\log_2|A|)^{1/p'}\,\|\bx\|_{\ell^{p,p}}.
\]
Inserting this into \eqref{f_aux1} and using that $\|\bx\|_{\ell^{p,p}}\approx\|\bx\|_{\ell^p}$
the result follows.
\end{proof}

\Proofof{Proposition \ref{p_app1}}
The proof for $p\leq q$ is trivial, since $\ell^p=\mathfrak{f}_{p,p}\hookrightarrow\fpq$, and
\[
\big\|\sum_{n\in A}\be_I\big\|_{\fpq}\leq\big\|\sum_{n\in A}\be_I\big\|_{\ell^p}=|A|^{1/p}.
\]
So from now on we consider $p>q$. The result is known for $d=1$ by the $p$-democracy of $\fpq(\sD)$; see
\cite[Prop. 3.2]{GH04}. So we proceed by induction, and will prove \eqref{app1} assuming its validity with $d$ replaced by $d-1$.
Let $A\subset\sRd$ with $|A|\leq N$, and define 
\[
A_1=\{I_1\in\sD\mid \exists\;I'\in\sR_{d-1} \;{\rm s.t.}\; I_1\times I'\in A\}
\]
and for each $I_1\in A_1$,
\[
A'(I_1)=\{I'\in\sR_{d-1}\mid I_1\times I'\in A\}.
\]
Then
\Beas
\big\|\sum_{I\in A}\be_I\|_{\fpq(\sRd)}^p & = & \int_{u'}\int_{u_1}\Big|\sum_{I_1\in A_1}\big(\sum_{I'\in A'(I_1)}\bbone_{I',p}^q\big)\bbone_{I_1,p}^q \Big|^{p/q}\,du_1\,du'
\\
& \lesssim & \int_{u'}(1+\log N)^{(\frac1q-\frac1p)p}\,\sum_{I_1\in A_1}\Big|\sum_{I'\in A'(I_1)}\bbone_{I',p}^q\big|^{p/q}\,du',
\Eeas
using Lemma \ref{l_d1log} in the inner integral, since $|A_1|\leq N$.
The last displayed expression equals
\Beas
& & \hskip-3cm (1+\log N)^{(\frac1q-\frac1p)p}\,\sum_{I_1\in A_1}\int_{u'}\Big|\sum_{I'\in A'(I_1)}\bbone_{I',p}^q\big|^{p/q}\,du'\\
&  \lesssim & (1+\log N)^{(\frac1q-\frac1p)p}\,\sum_{I_1\in A_1}\, (1+\log N)^{(d-2)(\frac1q-\frac1p)p}|A'(I_1)|\\
& = & \Big[(1+\log N)^{(d-1)(\frac1q-\frac1p)}|A|^{1/p}\Big]^p,
\Eeas
using in the middle step the induction hypothesis (since $|A'(I_1)|\leq N$).
\ProofEnd

\section*{ Acknowledgments }{The authors would like to thank the Isaac Newton Institute for Mathematical Sciences, Cambridge, for support and hospitality during the program 
 \emph{Approximation, Sampling and Compression in Data Science} where some work on this paper was undertaken;
this work was supported by EPSRC grant no EP/K032208/1. 

G.G. was supported in part by grants MTM2016-76566-P, MTM2017-83262-C2-2-P and Programa Salvador de Madariaga PRX18/451 from Micinn (Spain), and grant 20906/PI/18 from Fundaci\'on S\'eneca (Regi\'on de Murcia, Spain). 
E.H. was supported by grant MTM2016-76566-P (Spain), and by the European Union's
 Horizon 2020 research and innovation programme under the Marie Sk\l odowska-Curie grant agreement No 777822.
D.K. was supported by Simons Foundation Collaborative Grant No 636954.
V.T. was supported by the Russian Federation Government Grant No. 14.W03.31.0031.}

\bibliographystyle{plain}

\end{document}